\documentclass[reqno]{amsart}

\usepackage{amssymb,a4wide}
\usepackage{xcolor}
\definecolor{pgcopypositive}{RGB}{33,90,160}
\definecolor{pgcopynegative}{RGB}{145,75,15}

\usepackage{etoolbox}
\AtBeginEnvironment{quote}{\normalfont\normalsize\itshape}

\usepackage[colorlinks=true,linkcolor=blue,citecolor=blue,urlcolor=blue]{hyperref}
\newtheorem{theorem}{Theorem}
\newtheorem{lemma}[theorem]{Lemma}
\newtheorem{corollary}[theorem]{Corollary}
\newtheorem{proposition}[theorem]{Proposition}
\newtheorem{maintheorem}{Theorem}

\theoremstyle{remark}
\newtheorem{remark}[theorem]{Remark}
\newtheorem*{example}{Example}
\newcommand{\E}{\mathbb E}
\newcommand{\K}{\mathbb K}
\newcommand{\N}{\mathbb N}
\newcommand{\diag}{\operatorname{diag}}

\newcommand{\norm}[1]{\left\lVert#1\right\rVert}
\newcommand{\abs}[1]{\left\lvert#1\right\rvert}
\title[Uniqueness of symmetric structure]{On Pe\l czy\'nski's problem concerning the uniqueness of symmetric structure}
\author{Tianbao Guo}
\address{Department of Mathematics, Harbin University of Science and Technology,
Harbin, 150080, China}
\email{tianbaoguo.edu@gmail.com}
\author{Jinghao Huang}
\address{Institute for Advanced Study in Mathematics of HIT, Harbin 150001, China}
\email{jinghao.huang@hit.edu.cn}
\thanks{Corresponding author: Jinghao Huang.}
\date{}
\keywords{Symmetric sequence spaces, ideals of compact operators,
isomorphic embeddings, complemented subspaces, finite cotype}
\begin{document}
\begin{abstract}
Let $E$ and $F$ be separable symmetric Banach sequence spaces, and let
$C_E$ and $C_F$ be the corresponding ideals of compact operators
on a separable Hilbert space.
We prove that if $C_F$ is isomorphic to a complemented subspace of $S_E$,
then $F=\ell_2$. As a consequence, if $C_E$ is isomorphic to $C_F$,
then $E=F$ with equivalent norms. This answers Pe\l czy\'nski's
1979 question concerning the uniqueness of symmetric structure.
We also strengthen Arazy's result by showing that if $E$ has finite
cotype and $C_F$ embeds isomorphically into $S_E$, then $F=\ell_2$.
For reflexive $F\ne\ell_2$ with non-trivial Boyd indices, the embedding
theorems of Lindenstrauss and Szankowski yield a separable reflexive
symmetric sequence space $E$ with trivial cotype such that $C_F$ embeds
isomorphically into $S_E$. This gives negative answers to several
questions raised by Arazy in 1981.
\end{abstract}
\maketitle

\section{Introduction}
\subsection{Uniqueness of symmetric structure}
It has long been known that isomorphic symmetric sequence spaces
need not have equivalent canonical bases; see \cite[Section 3.b]{LT}.
This leads to the question of whether symmetric ideals of compact
operators have a unique symmetric structure
(Pe\l czy\'nski, International Conference on Banach Space Theory
and its Applications, Kent, Ohio, August 1979 \cite{Arazy,Arazy83}):
\begin{quote}
Let $E$ and $F$ be separable symmetric Banach sequence spaces. If $C_E$ and $C_F$ are isomorphic, must $E$ and $F$ coincide as
sets of sequences with equivalent norms?
\end{quote}

Let $E$ be a symmetric Banach sequence space. The associated ideal
of compact operators on $\ell_2$ is
\[
 C_E=\{x\in\mathcal K(\ell_2):s(x)\in E\},\qquad
 \norm{x}_{C_E}=\norm{s(x)}_E,
\]
where $s(x)$ is the decreasing sequence of singular values of $x$.
We say that $C_E$ has a unique symmetric structure if, for every
separable symmetric sequence space $F$, an isomorphism $C_E\simeq C_F$
implies that $E$ and $F$ coincide as sets of sequences and have
equivalent norms. We denote this relation by $E=F$.

We write $\K$ for the scalar field.
We write $c_{00}$ for the space
of finitely supported sequences over $\K$.

The Schatten classes $C_p$, $1\le p<\infty$, and
$\mathcal K(\ell_2)$ with the operator norm were already known to have
unique symmetric structure; see \cite{Arazy83,AL75}.
Arazy studied this problem in \cite{Arazy,Arazy83} by considering
the subspace
\[
 S_E=\Bigl(\bigoplus_{n\geq1}M_n(\K)\Bigr)_{C_E},
\]
where $M_n(\K)$ acts on the $n$th summand of
$\ell_2=\bigoplus_{n\ge1}\K^n$, and each matrix algebra carries the
usual matrix trace.

Arazy proved the following alternative under the assumption that
neither $E$ nor $F$ contains
a subspace isomorphic to $c_0$: a complemented embedding of $C_F$
into $C_E$ implies that $F=\ell_2$, or $E=F$, or $C_F$ embeds
complementably into $S_E$ \cite[Theorem 5.6]{Arazy}.
For symmetric sequence spaces $E$ and $F$ containing no copy of
$c_0$, Arazy \cite[Corollary 5.7]{Arazy} also proved that
\[
 C_E\simeq C_F\quad\Longrightarrow\quad
 E=F\ \text{ or }\ C_E\simeq S_E\simeq S_F\simeq C_F.
\]
Moreover, if $E$ contains no copy of $c_0$ and $C_E\not\simeq S_E$,
then $C_E$ has unique symmetric structure
\cite[Corollary 5.8]{Arazy}.
Arazy \cite{Arazy83} also recorded that $C_E\simeq C_F$
implies $E\simeq F$ as Banach spaces. The converse fails
\cite[Theorem 4.14]{Arazy78}.

Huang and Sukochev \cite[Corollary 5.1]{HS2024} proved
uniqueness of symmetric structure under surjective linear isometries
between complex symmetric ideals with order-continuous norms.

We prove the following theorem.

\begin{maintheorem}\label{pg:uniqueness}
Let $E$ and $F$ be separable symmetric Banach sequence spaces.
If $C_E\simeq C_F$, then
$E=F$ as sets of sequences with equivalent norms.
\end{maintheorem}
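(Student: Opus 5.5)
The plan is to reduce Theorem~\ref{pg:uniqueness} to a dichotomy resting on a single new ingredient, stated in the abstract: if $C_F$ is isomorphic to a complemented subspace of $S_E$, then $F=\ell_2$. Granting this, I would feed it into Arazy's alternative \cite[Theorem 5.6]{Arazy}: a complemented embedding $C_F\hookrightarrow C_E$ forces $F=\ell_2$, or $E=F$, or $C_F$ embeds complementably into $S_E$. The third option collapses into the first by the new ingredient. So a complemented embedding of $C_F$ into $C_E$ gives $F=\ell_2$ or $E=F$. The hypothesis $C_E\simeq C_F$ supplies complemented embeddings in both directions. If $E\ne F$, applying the dichotomy in both directions yields $F=\ell_2$ and $E=\ell_2$, hence $E=F$, a contradiction.

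Two technical points need care. First, Arazy's theorem assumes that neither $E$ nor $F$ contains $c_0$, so the case where $c_0$ embeds must be handled separately. When $E$ is separable symmetric and contains $c_0$, I expect the argument to show that $E=c_0$, or at least that $C_E$ contains many complemented copies of $c_0$. I would then use $C_E\simeq C_F$ to transfer this property, for example through Arazy's observation \cite{Arazy83} that $C_E\simeq C_F$ implies $E\simeq F$. Together with the fact that $\mathcal K(\ell_2)$ has unique symmetric structure \cite{AL75}, this should dispose of the case. If that route fails, I would instead redo Arazy's Theorem~5.6 argument under separability alone, since separability (order continuity) is what the block-diagonal projections actually use. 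Second, the case $F=\ell_2$ must be closed off. If $C_E\simeq C_2$, then $C_E$ is a Hilbert space. Since $E$ embeds isometrically in $C_E$ as the diagonal, $E$ is Hilbertian, and a symmetric Hilbertian sequence space equals $\ell_2$ by the Kadec--Pe\l czy\'nski/Lindenstrauss--Tzafriri uniqueness of the symmetric basis of $\ell_2$.

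The main obstacle is the new ingredient, namely that $C_F\hookrightarrow_c S_E$ forces $F=\ell_2$. My approach would exploit the structural gap between the two spaces. $S_E$ is an $E$-sum of finite-dimensional matrix blocks, while $C_F$ contains the "infinite matrix" structure, for instance the upper-triangular or row/column subspaces $R_F$ and $C_F$-columns, which are isomorphic to $\ell_2$-type spaces. I would take the complemented image of $C_F$ and apply a gliding-hump / Kadec--Pe\l czy\'nski perturbation: normalized weakly null sequences of matrix units $e_{1j}$ (spanning a row, isometric to $\ell_2$) and $e_{ij}$ (spanning the diagonal, a copy of $F$) are mapped into $S_E$. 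After passing to subsequences, these become block sequences with respect to the block decomposition.

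In $S_E$, each block is finite dimensional, so a block basic sequence behaves like an $E$-sum of block norms. Its spread over many blocks is controlled only by the norms. The complementation together with the unconditional two-index structure of $\{e_{ij}\}$ should then show that the $F$-basis and the $\ell_2$-row basis are both equivalent to block sequences realizing the same lattice "$E$-averages". Combined with a Rademacher-averaging argument over the two indices, this would force the $F$-norm to coincide with the $\ell_2$-norm. This block-analysis step, done without any cotype assumption on $E$, is where I expect essentially all the difficulty to lie.
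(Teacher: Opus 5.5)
Your outer logic is sound where it applies. Once Theorem~\ref{pg:comp-block} is available, Arazy's Corollary~5.7 (or \cite[Theorem 5.6]{Arazy} used in both directions) reduces everything to $F=\ell_2$. Your argument that $C_E\simeq C_2$ forces $E=\ell_2$ is the one the paper uses.

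The first genuine gap is the $c_0$ case, which Arazy's results exclude. Your patch rests on a false expectation: a separable symmetric sequence space containing $c_0$ need not be $c_0$. Examples are the separable part $h_M$ of an Orlicz sequence space whose Orlicz function fails the $\Delta_2$-condition at zero, or the separable part of weak $\ell_p$, $1<p<\infty$. Knowing $E\simeq F$ as Banach spaces, or that $C_E$ has many complemented copies of $c_0$, says nothing about canonical equality, since isomorphic symmetric spaces can have inequivalent canonical bases. Uniqueness for $\mathcal K(\ell_2)$ covers only $E=c_0$. Your fallback, redoing Theorem~5.6 under separability alone, is stated but not carried out.

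The paper never invokes Theorem~5.6. It splits according to whether $t_F=\norm{L_F}$ is finite, which is an isomorphism invariant by Lemma~\ref{pg:triangle-invariance}. When $t_F=\infty$, the shell reproduction is split into components (Lemma~\ref{pg:coarse-components}). Schur-multiplier stabilization (Lemma~\ref{pg:schur-stabilization}) rules out the components supported only below the diagonal. The matched components $C_{12},C_{21}$ then lead through Theorem~\ref{pg:matched-dichotomy} to $E=F$ (Theorem~\ref{pg:unbounded-uniqueness}). When $t_E,t_F<\infty$, Proposition~\ref{pg:bounded-alternative} gives the three-way alternative via Lemma~\ref{pg:interlacing} and Propositions~\ref{pg:block-row}--\ref{pg:one-sided}. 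None of this appears in your proposal.

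The second gap is that your sketch of the key ingredient is not yet an argument, and the tools it names fail as stated.
\begin{itemize}
\item The matrix units are not an unconditional basis of $C_F$ in general, already not for $C_p$ with $p\ne2$. Only row and column sign changes $x\mapsto d_\epsilon xd_\tau$ act isometrically.
\item The diagonal units are not weakly null in $C_1$ (Remark~\ref{pg:diagonal-endpoint}). The paper instead uses differences $e_{n_kn_k}-e_{m_km_k}$ in Lemma~\ref{pg:full-reproduction}.
\item Comparing common and independent Rademacher signs on disjoint blocks is the mechanism of Theorem~\ref{thm:main}, which needs finite cotype of $E$. By Theorem~\ref{thm:counterexample}, without cotype an embedding into $S_E$ alone does not force $F=\ell_2$. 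So the left inverse must enter essentially, and your sketch does not say how.
\end{itemize}

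The missing idea runs as follows. Reproduction gives $V,R$ with $RV=I$, $Q_nV=V\Pi_n$ and $\Pi_nR=RQ_n$ (Proposition~\ref{pg:reproduction}). The finite-block structure of $S_E$ supplies contractions with no counterpart on $C_F$: the block multipliers $z\mapsto(a_tz_tb_t)_t$. Pulled back through $V$ and $R$ on a single row shell, two results combine. The trace inequality $c_{11}\le\sqrt{c_{21}c_{12}}+\Delta$ (Lemma~\ref{pg:mixed-inequality}) and Haar averaging (Lemma~\ref{pg:haar-return}) show that, on average, a row direction $u$ can be carried to an orthogonal direction $w$ with coefficient at least $1/3$. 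Doing this separately in distinct shells turns rows with orthonormal directions (norm $\norm{\alpha}_F$) into parallel rows (a rank-one matrix, norm $\norm{\alpha}_2$) and back. This is Theorem~\ref{pg:pure-hilbert}.
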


The proof uses the implication
\begin{equation}\label{eq:intro-complemented}
 C_F\text{ embeds complementably into }S_E
 \quad\Longrightarrow\quad F=\ell_2
\end{equation}
for arbitrary separable symmetric $E$ and $F$
(Theorem~\ref{pg:comp-block}).
A coordinate selection and an operator-norm perturbation yield an
embedding $V$ and a bounded left inverse $R$ satisfying
\[
 Q_nV=V\Pi_n,\qquad \Pi_nR=RQ_n\qquad(n\geq1).
\]
Here $\Pi_n=P_n-P_{n-1}$ are the projections defined in
\eqref{pg:shells}. For a suitable sequence of integers
$0=N_0<N_1<N_2<\cdots$, define
\[
 Q_n:S_E\to S_E,\qquad
 Q_n(y)=\sum_{j=N_{n-1}+1}^{N_n}q_jyq_j\quad(n\geq1),
\]
where $q_j$ is the orthogonal projection onto the $j$th summand
$\K^j$ in $\ell_2=\bigoplus_{j\geq1}\K^j$.
Theorem~\ref{pg:pure-hilbert} gives
\[
 (3\norm{V}\norm{R})^{-1}\norm{a}_2
 \leq\norm{a}_F\leq3\norm{V}\norm{R}\,\norm{a}_2
 \qquad(a\in c_{00}).
\]
The estimate follows from Lemma~\ref{pg:mixed-inequality} and
averaging over invariant orthonormal pairs.
Theorem~\ref{pg:uniqueness} is then deduced from
Theorem~\ref{pg:comp-block}, Proposition~\ref{pg:bounded-alternative}
and Theorem~\ref{pg:unbounded-uniqueness}.

\subsection{Isomorphic embeddings}
Arazy's question on isomorphic embeddings, restated in
\cite[Question 1.1]{HSS}, asks whether
\begin{equation}\label{eq:intro-arazy}
 C_F\hookrightarrow S_E\quad\Longrightarrow\quad F=\ell_2
\end{equation}
hold for separable symmetric $E,F$? Here $\hookrightarrow$ denotes an
isomorphic embedding; the range need not be complemented.
Huang, Sadovskaya and Sukochev proved that no such embedding exists
if $E$ contains no copy of $\ell_2$ \cite[Theorem 4.1]{HSS}.
They proved \eqref{eq:intro-arazy} under the following condition:
every normalized block basic sequence of the canonical basis of $E$
whose $\ell_\infty$-norms tend to zero has a subsequence equivalent
to the unit vector basis of $\ell_2$ \cite[Theorem 4.4]{HSS}.
Following \cite{HSS}, we call this condition Property $(*)$.

The unrestricted implication \eqref{eq:intro-arazy} is false.
For every separable reflexive symmetric sequence space $F\ne\ell_2$
with non-trivial Boyd indices, classical embedding theorems give a
separable reflexive symmetric sequence space $E$ such that
\[
 C_F\hookrightarrow E\hookrightarrow S_E.
\]
We use the exponent convention of \cite{Arazy78}: non-trivial Boyd
indices mean $1<p_F\leq q_F<\infty$. This includes $F=\ell_p$,
$1<p<\infty$, $p\ne2$, with $C_p=C_{\ell_p}$.
The non-trivial Boyd indices imply that $C_F$ has an unconditional
finite-dimensional decomposition \cite[Corollary 4.6]{Arazy78}.
Lindenstrauss's embedding theorem \cite[Theorem 1.g.5]{LT}, together
with \cite[Theorem 3.1]{FJT}, gives an embedding of $C_F$ into a
reflexive space with an unconditional basis. Szankowski's
theorem then gives an embedding into a reflexive space with a
symmetric basis \cite{Szankowski};
see also \cite[Remark 3.2 and Theorem 3.3]{DKM}.
Theorem~\ref{thm:counterexample} gives the details. The resulting
copy of $C_F$ is not asserted to be complemented.

Finite cotype of $E$ is sufficient for \eqref{eq:intro-arazy};
we refer to \cite{Maligranda} for its definition.

\begin{theorem}\label{thm:main}
Let $E,F$ be separable symmetric Banach sequence spaces. If $E$ has
finite cotype and $C_F\hookrightarrow S_E$, then $F=\ell_2$.
\end{theorem}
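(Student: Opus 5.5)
Under finite cotype of $E$, we plan to reduce to the complemented case, Theorem~\ref{pg:comp-block}, or directly to the mechanism behind Theorem~\ref{pg:pure-hilbert}.

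\emph{Step 1: dichotomy on $E$.} Since $E$ has finite cotype it contains no copy of $c_0$. Being separable and symmetric, $E$ is then order continuous, and in fact $E$ is a KB-space. The same holds for $C_E$ and $S_E$: noncommutative cotype transfers from $E$ to $C_E$, so $S_E$ has finite cotype. By the result of Huang--Sadovskaya--Sukochev \cite[Theorem 4.1]{HSS}, if $E$ contains no copy of $\ell_2$ there is no embedding at all. So we may assume $\ell_2\hookrightarrow E$.

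\emph{Step 2: use finite cotype to recover the perturbation argument.} Let $T\colon C_F\to S_E$ be the embedding. Apply $T$ to the matrix units $e_{ij}$ of $C_F$, or more precisely to the diagonal blocks $\Pi_n$-pieces. Using a gliding hump argument in $S_E$ (possible since $S_E$ is order continuous), we pass to a subsequence and small perturbation. This gives an embedding $V$ of a copy of $C_F$ (again, by the coordinate selection used for Theorem~\ref{pg:comp-block}) with $Q_nV=V\Pi_n$ for suitable blocks $N_n$. The missing ingredient compared with Theorem~\ref{pg:comp-block} is the left inverse $R$ commuting with the $Q_n$.

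\emph{Step 3: replace $R$ by a cotype/type estimate.} In the proof of Theorem~\ref{pg:pure-hilbert}, the upper estimate $\|a\|_F\lesssim\|a\|_2$ and the lower estimate are obtained by averaging over invariant orthonormal pairs via Lemma~\ref{pg:mixed-inequality}. We would rerun this averaging with Rademacher averages in place of $R$. For $a\in c_{00}$, build unitarily rotated copies $x_k$ of $\diag(a)$ in $C_F$ whose images under $V$ lie in disjoint $Q_n$-blocks.

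The key inequality then comes from two sides. The block-disjointness in $S_E$ plus cotype $q$ of $S_E$ gives $\mathbb E\|\sum\varepsilon_k Vx_k\|\gtrsim(\sum\|Vx_k\|^q)^{1/q}$. The structure of $C_F$ (rotations of one matrix add up in $C_F$ like the $\ell_2$-sum of singular values) controls the left side from above by $\|V\|$ times an $F$-norm of a rearranged concatenation.

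Letting the number of copies grow, this should force $F$ to have cotype and type comparable to that of $\ell_2$ on the relevant disjoint sequences. Combining this with the upper estimate coming from the other side of Lemma~\ref{pg:mixed-inequality} yields $\|a\|_F\approx\|a\|_2$, i.e.\ $F=\ell_2$.

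\emph{Main obstacle.} The hard step is Step 3: producing a substitute for the bounded left inverse $R$. Without complementation, only the lower estimate $\|Vx\|\ge\|V^{-1}\|^{-1}\|x\|$ is available. We expect finite cotype to supply exactly the missing lower $\ell_q$-type estimate on block sums. The first thing to try is to show that in $S_E$ with cotype $q$ and $\ell_2\hookrightarrow E$, block sequences that are images of unitary rotations are in fact equivalent to the $\ell_2$ basis. That is a version of Property $(*)$ restricted to these blocks, after which \cite[Theorem 4.4]{HSS} finishes the proof.
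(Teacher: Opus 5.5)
Your Step 2 is correct and is what the paper does: Lemma~\ref{pg:full-reproduction} with $Y=S_E$ gives $V$ with $Q_nV=V\Pi_n$ and two-sided bounds, and this needs no left inverse. Step 1 is harmless but never used. The gap is Step 3, which carries all the content, and the inequality you propose there cannot work.

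The images $Vx_k$ lie in distinct blocks $Q_nS_E$. These blocks are two-sided disjoint, since each is compressed on both sides by the single projection $\sum_{N_{n-1}<t\le N_n}q_t$. So $\sum_k\varepsilon_kVx_k$ has the same singular values as $\sum_kVx_k$ for every choice of signs. The Rademacher average is therefore just $\|\sum_kVx_k\|_{C_E}$, and ``cotype of $S_E$'' reduces to a lower $q$-estimate for disjoint blocks, i.e.\ $q$-concavity of $E$.

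Pulled back through the two bounds of $V$, this gives only $(\sum_k\|x_k\|_{C_F}^q)^{1/q}\lesssim\|\sum_kx_k\|_{C_F}$ for sums over disjoint groups of shells. That estimate holds for $F=\ell_p$ whenever $1<p\le q$. If the copies add up like $\sqrt N\|a\|_F$, as you suggest, it reads $N^{1/q}\|a\|_F\lesssim\sqrt N\|a\|_F$, which is empty for $q\ge2$. A cotype inequality in the target passes to subspaces but never produces type. So it cannot give $\|a\|_F\lesssim\|a\|_2$, let alone the two-sided equivalence.

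The ``other side of Lemma~\ref{pg:mixed-inequality}'' is also unavailable. That lemma is fed by trace-duality matrices $D_i$ representing functionals built from the left inverse $R$, which you do not have. The fallback through Property $(*)$ is circular. Finite cotype does not imply Property $(*)$, a version restricted to your particular blocks is essentially the theorem itself, and \cite[Theorem 4.4]{HSS} requires the full property.

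The missing idea is to put the signs \emph{inside} the blocks and compare two sign patterns with identical blockwise distributions, rather than comparing a random sum with an $\ell_q$-sum. Take two-sided disjoint $x_{ik}=q_kx_{ik}q_k$, and form $Y=\sum_ir_i\sum_kx_{ik}$ (common signs) and $Z=\sum_{i,k}r_{ik}x_{ik}$ (independent signs). Then $q_kYq_k$ and $q_kZq_k$ are equally distributed, so the expected concatenated singular-value vector $v$ is the same for both.

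The triangle inequality gives $\|v\|_E\le(\mathbb E\|W\|_{C_E}^2)^{1/2}$ for $W=Y,Z$. The reverse bound $4M_qK_q\|v\|_E$ comes from $q$-concavity of $E$, a dimension-free Kahane--Khintchine inequality for the quasi-norms $Q_j(a)=\max_{l\le j}(l/j)^2s_l(a)$, and the dilation majorant $L_b$ (Lemmas~\ref{lem:quasi} and~\ref{lem:block}).

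Now apply this to $x_{ik}=\alpha_kV(e_{i,m+k})\in Q_{m+k}S_E$. The source of $Y$ is the rank-one matrix $[r_i\alpha_k]$, of norm $\sqrt m\|\alpha\|_2$. The source of $Z$ is $G_m\diag(\alpha)$ with a Bernoulli matrix $G_m$, and the strong law gives $m^{-1/2}(\mathbb E\|G_m\diag(\alpha)\|_{C_F}^2)^{1/2}\to\|\alpha\|_F$. Only the two embedding bounds of $V$ enter, which is why no left inverse is needed.
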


\begin{corollary}
If $E$ has finite cotype, $1<p<\infty$, and $C_p\hookrightarrow S_E$,
then $p=2$.
\end{corollary}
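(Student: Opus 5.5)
The statement to prove is the Corollary. It is a direct specialization of Theorem~\ref{thm:main}, which I take as given, so I will apply that theorem with $F=\ell_p$ and then read off $p=2$ from the conclusion $F=\ell_2$.

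First I check that $F=\ell_p$ satisfies the hypotheses of Theorem~\ref{thm:main}. For $1<p<\infty$ the space $\ell_p$ is a separable symmetric Banach sequence space: its canonical basis is symmetric and $c_{00}$ is dense. By the convention recalled in the introduction, $C_p=C_{\ell_p}$. The remaining hypotheses are that $E$ has finite cotype and that $C_{\ell_p}\hookrightarrow S_E$, and both are assumed in the Corollary.

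Theorem~\ref{thm:main} therefore gives $\ell_p=\ell_2$. By the definition in the introduction, this means the two spaces coincide as sets of sequences and their norms are equivalent on them.

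Finally I show this forces $p=2$. Let $e_k$ be the unit vectors and set $a_n=\sum_{k=1}^n e_k$. Then $\norm{a_n}_{\ell_p}=n^{1/p}$ and $\norm{a_n}_{\ell_2}=n^{1/2}$. Norm equivalence gives a constant $C$ with $C^{-1}n^{1/2}\le n^{1/p}\le Cn^{1/2}$ for every $n$. Letting $n\to\infty$, this is possible only if $1/p=1/2$, so $p=2$.

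Nothing here is a real obstacle, since all the difficulty sits in Theorem~\ref{thm:main}. The only point needing care is that "$F=\ell_2$" means equivalence of norms, and that alone already pins down $p$ through the fundamental function $n\mapsto n^{1/p}$.
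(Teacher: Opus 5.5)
Your proposal is correct and matches the paper's route: the corollary is the specialization $F=\ell_p$ of Theorem~\ref{thm:main}, using $C_p=C_{\ell_p}$. Your comparison of the fundamental functions $n^{1/p}$ and $n^{1/2}$ correctly turns the conclusion $\ell_p=\ell_2$ (equivalent norms) into $p=2$.
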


The proof compares common and independent Rademacher signs on
two-sided disjoint matrix blocks. In the $2$-concave case and in
the $2$-convex, finite-concavity case, this comparison already
follows from Lust-Piquard and Xu \cite[Theorem 1.3]{LPX}; related
product/Bochner comparisons are discussed in \cite{DS,LMS}.
\S~\ref{sec:older-comparisons} records these implications.
Theorem~\ref{thm:main} requires finite cotype, without assuming
$2$-convexity or $2$-concavity. 
Finite cotype is not necessary for
\eqref{eq:intro-arazy} at a fixed $E$, as the case $E=c_0$ already
shows by \cite[Theorem 4.1]{HSS}.

After the preliminaries in Section~\ref{sec:preliminaries},
we prove \eqref{eq:intro-complemented} in Section~\ref{pg:sec-pure}
and Theorem~\ref{pg:uniqueness} in Section~\ref{pg:sec-uniqueness}.
Section~\ref{sec:ordinary-embeddings} treats ordinary embeddings
and compares the hypotheses with those of \cite{HSS}.

\section{Preliminaries}\label{sec:preliminaries}

All sequence spaces are separable symmetric Banach sequence spaces,
normalized by $\norm{e_1}_E=\norm{e_1}_F=1$. We refer to \cite{Arazy,HSS,Simon}
for the background on symmetric sequence spaces and their operator ideals.
We give the definitions and basic properties for $E$; they also apply to $F$.
For a separable Hilbert space $H$, we write $C_E(H)$ for the
corresponding ideal of compact operators on $H$, with norm
$\norm{s(x)}_E$.
We omit $H$ when $H=\ell_2$. Finite matrices are norm dense in $C_E$.
The ideal property gives
\[
 \norm{axb}_{C_E}\leq\norm{a}\norm{x}_{C_E}
                         \norm{b}.
\]
If $U,V$ are Hilbert-space isometries, the map $x\mapsto UxV^*$
preserves singular values.
A rank-one operator has ideal norm equal to its operator norm; hence a
single row or column has its Euclidean norm and
$\abs{x_{ij}}\leq\norm{x}\leq\norm{x}_{C_E}$.
If $x$ has rank at most $d$, then
\[
 \norm{x}_{C_E}\leq\norm{x}_1\leq d\norm{x}.
\]

We use the usual matrix trace. Decomposing
$\ell_2=\bigoplus_{n\geq1}\K^n$, we identify $S_E$ with the closed
subspace of $C_E$ consisting of operators $\diag(x_1,x_2,\ldots)$,
where $x_n\in M_n(\K)$.
For two families $(u_i)$ and $(v_i)$ of orthogonal Hilbert-space projections,
with pairwise orthogonality within each family, the series
$\sum_i u_i x v_i$ converges in ideal norm and satisfies
\begin{equation}\label{eq:orthogonal-projections}
 \norm{\sum_i u_i x v_i}_{C_E}\leq\norm{x}_{C_E}
 \qquad(x\in C_E);
\end{equation}
see \cite[(2.1)]{Arazy81Geometry}; for coordinate projections, see
also \cite[(1.3)]{Arazy78}. The norm convergence follows under
the standing separability assumption on $E$.
Let $q_n$ project onto the summand $\K^n$. Taking $u_n=v_n=q_n$
in \eqref{eq:orthogonal-projections} gives the contractive projection
$x\mapsto\sum_n q_nxq_n$ onto $S_E$. Rank-one coordinate projections
give the projection onto the diagonal matrices.
If Hilbert
projections $p_n\to I$ strongly, then $p_nx\to x$ and $xp_n\to x$ in
ideal norm: first check this for finite-rank $x$, using $\norm{z}_{C_E}\leq\operatorname{rank}(z)\norm{z}$, and then approximate an arbitrary $x$ by finite-rank
operators.

Let $(e_i)_{i\geq1}$ be the canonical orthonormal basis of $\ell_2$.
For $i,j\geq1$, let $e_{ij}$ denote the matrix unit whose only nonzero
entry is a $1$ in row $i$ and column $j$; thus
$e_{ij}e_k=\delta_{jk}e_i$.
Let $p_n$ project onto the first $n$ coordinates of $\ell_2$, with $p_0=0$,
and set $P_nx=p_nxp_n$ and
\begin{equation}\label{pg:shells}
 \Pi_n=P_n-P_{n-1}.
\end{equation}
Thus $\Pi_nC_E$ is spanned by the units $e_{ij}$ with $\max(i,j)=n$.
We write $x^{\mathsf t}$ for transpose in the fixed Hilbert bases.
It is a linear isometry on every $C_E$.
Hilbert inner products are linear in the first variable.
Equality $E=F$ means equality of sequence sets with equivalent norms.
Such equality follows from uniform two-sided estimates on $c_{00}$:
extend the identity by completeness and identify its extensions using
the continuous coordinate functionals.

The shell decomposition and its reproducibility are classical; see
\cite[pp.~86--87]{AL75} and \cite[p.~1]{Arazy83}.
Lemma~\ref{pg:full-reproduction} gives a quantitative version for
embeddings into $C_E$ and the subspace $S_E$, including $F=\ell_1$.
When the original embedding has a bounded left inverse, so does
the resulting embedding.

\begin{lemma}\label{pg:full-reproduction}
Let $Y$ be a Banach space admitting a sequence of finite-rank projections
$H_N:Y\to Y$, $N\geq0$, such that $H_0=0$,
$H_NH_M=H_{\min(N,M)}$ for $N,M\geq0$, $\norm{H_N}\leq1$, and
$H_Ny\to y$ in norm for every $y\in Y$. Suppose that $T:C_F\to Y$ is a linear map
satisfying
\[
 a\norm{x}_{C_F}\leq\norm{Tx}_Y\leq b\norm{x}_{C_F},\qquad a>0.
\]
For every $0<\eta<a$ there are disjoint coordinate isometries $U_0,U_1$,
integers $0=N_0<N_1<\cdots$, and an embedding $V:C_F\to Y$ such that,
with $Jx=U_0xU_0^*-U_1xU_1^*$ and $Q_k=H_{N_k}-H_{N_{k-1}}$,
\[
 \norm{V-TJ}<\eta,\qquad Q_kV=V\Pi_k.
\]
Consequently,
\[
 (a-\eta)\norm{x}_{C_F}\leq\norm{Vx}_Y
                      \leq(2b+\eta)\norm{x}_{C_F}.
\]
If, in addition, a bounded linear map $S:Y\to C_F$ satisfies $ST=I$
and $\eta\norm{S}<1$, then $V$ has a bounded linear left inverse
$R_1:Y\to C_F$ satisfying
\[
 R_1V=I,\qquad
 \norm{R_1}\leq\frac{\norm{S}}{1-\eta\norm{S}}.
\]
In particular, $V(C_F)$ is complemented in $Y$.
\end{lemma}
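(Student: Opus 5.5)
We will construct $V$ by a gliding-hump perturbation of $TJ$. The map $J$ is there to kill the ``diagonal'' parts of the images of the shell pieces, using two disjoint copies of $C_F$ with opposite signs.

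\textbf{Step 1: choice of $U_0,U_1$.} Let $U_0,U_1$ be coordinate isometries of $\ell_2$ with disjoint ranges, each mapping $e_i$ to basis vectors. Then $x\mapsto U_0xU_0^*$ and $x\mapsto U_1xU_1^*$ preserve singular values. Since $U_0xU_0^*$ and $U_1xU_1^*$ act on orthogonal subspaces, $s(Jx)$ is the decreasing rearrangement of $s(x)\cup s(x)$. Hence
\[
\norm{x}_{C_F}\le\norm{Jx}_{C_F}\le 2\norm{x}_{C_F}
\]
(the lower bound by compressing to the first block, using \eqref{eq:orthogonal-projections}). Thus $TJ$ satisfies $a\norm{x}\le\norm{TJx}\le 2b\norm{x}$.

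\textbf{Step 2: inductive construction.} Choose $\varepsilon_k>0$ with $\sum_k\varepsilon_k$ small, to be fixed later. The units $e_{ij}$ with $\max(i,j)=k$ span the finite-dimensional space $\Pi_kC_F$. We build the $N_k$ inductively, together with choices of the $U$'s on new coordinates, so that
\[
\norm{(I-(H_{N_k}-H_{N_{k-1}}))TJ\Pi_k}<\varepsilon_k .
\]
For the tail part $(I-H_{N_k})TJ\Pi_k$ this is automatic: $TJ\Pi_k$ has finite-dimensional range and $H_N\to I$ strongly, so $N_k$ large suffices.

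\textbf{Step 3: the head part (main obstacle).} The difficulty is $H_{N_{k-1}}TJ\Pi_k$. The matrix units in shell $k$ involve the new coordinate $k$, which we may send under $U_0,U_1$ to fresh basis vectors $m_0(k),m_1(k)$ far out. These can be chosen \emph{after} $N_{k-1}$ is fixed; the earlier coordinates $1,\dots,k-1$ keep their images.

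Consider first the units $e_{ij}$ with $i,j<k$; these do not occur in shell $k$. The only genuinely new units are $e_{kj},e_{jk},e_{kk}$. Their images $U_\epsilon e_{kj}U_\epsilon^*$ are rank-one operators involving a fresh basis vector. As the fresh index $m\to\infty$, these images tend weakly to $0$ in $C_F$; indeed, for a row $e_{m\,l}$ they form a sequence equivalent to the $\ell_2$ basis. The map $H_{N_{k-1}}T$ is finite-rank, hence compact, so it sends weakly null sequences to norm null ones.

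The exception is the case $F=\ell_1$, where rows are not weakly null. This is exactly why $J$ carries the minus sign, and why the statement singles out $F=\ell_1$. We handle it by choosing the fresh indices in pairs and passing to a subsequence on which $H_{N_{k-1}}TU_\epsilon e_{ml}U_\epsilon^*$ converges; this is possible because the range is finite-dimensional and the sequence is bounded. The differences then go to $0$. Concretely, we pick $m_0(k)$ and $m_1(k)$ far along the convergent subsequence. We arrange the symmetric roles so that the two terms of $J$ approximately cancel after applying $H_{N_{k-1}}T$. If exact symmetry fails, we instead define $U_0,U_1$ on coordinate $k$ using two far-out indices from a single convergent subsequence, replacing the difference structure accordingly.

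This step, making the $\ell_1$ case work via the minus sign, is the one I expect to need the most care.

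\textbf{Step 4: definition of $V$ and the estimates.} Set $V=\sum_k Q_kTJ\Pi_k$. Then $Q_kV=V\Pi_k$, since the $Q_k$ are mutually orthogonal projections, as follows from $H_NH_M=H_{\min}$. Moreover
\[
\norm{V-TJ}\le\sum_k\norm{(I-Q_k)TJ\Pi_k}\norm{\Pi_k}.
\]
We have $\norm{\Pi_k}\le2$, so choosing $\sum_k2\varepsilon_k<\eta$ gives $\norm{V-TJ}<\eta$. The two-sided bounds follow from Step 1 and this perturbation.

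\textbf{Step 5: the left inverse.} Given $S$ with $ST=I$, the map $S_1:=PU_0^*(\cdot)U_0\circ S$ satisfies $S_1TJ=I$. Here $P$ denotes the compression $y\mapsto U_0^*yU_0$, which is contractive and kills the $U_1$ term by disjointness, so $\norm{S_1}\le\norm{S}$. Then
\[
\norm{S_1V-I}\le\norm{S}\eta<1,
\]
so $S_1V$ is invertible by a Neumann series, with $\norm{(S_1V)^{-1}}\le(1-\eta\norm{S})^{-1}$. We set $R_1=(S_1V)^{-1}S_1$. Then $VR_1$ is a projection onto $V(C_F)$, which gives the complementation.
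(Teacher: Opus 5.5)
Your architecture matches the paper's: the same $J$, the same $V=\sum_kQ_kTJ\Pi_k$ (the paper's $Ve_{ij}=Q_kTz_{ij}$ on shell $k$ is the same map), and the same left inverse $R_1=(LSV)^{-1}LS$ with $Lz=U_0^*zU_0$. The gap is in Step 3, which you rightly flag: you put the obstruction in the wrong place.

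A row or column with a fixed old index, such as $(e_{s n_j})_s$ or $(e_{n_j s})_s$, spans an isometric copy of $\ell_2$ in \emph{every} $C_F$. This includes $C_1$, since rank-one operators have ideal norm equal to operator norm. So these sequences are always weakly null, and compactness of $H=H_{N_{k-1}}T$ disposes of all new off-diagonal units $e_{kj},e_{jk}$, $j<k$, for every $F$. The unit your first paragraph does not cover is the new diagonal one. Here $U_0e_{kk}U_0^*=e_{n_kn_k}$ has \emph{both} indices fresh, and $(e_{ss})_s$ is a copy of the unit vector basis of $F$, not a row or column. Your $\ell_2$ justification therefore does not apply to it, and in $C_1$ this sequence is not weakly null, because $\operatorname{Tr}(e_{ss})=1$. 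As written, nothing controls $HJe_{kk}$ when $F=\ell_1$.

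The cancellation you propose for rows would also fail. With $U_0e_j=e_{n_j}$ and $U_1e_j=e_{m_j}$, we have $Je_{kj}=e_{n_kn_j}-e_{m_km_j}$. Its two terms come from two different sequences, $s\mapsto He_{sn_j}$ and $s\mapsto He_{sm_j}$, whose subsequential limits need not agree. ``Replacing the difference structure accordingly'' is not a construction.

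The fix is the paper's: apply the pairing to the diagonal.
First choose $s_0$ beyond all previously used indices so that $\norm{He_{rs}}_Y,\norm{He_{sr}}_Y<\epsilon_k/4$ for all $s\geq s_0$ and all $r\in\{n_i,m_i:i<k\}$.
Then, since $(He_{ss})_{s\geq s_0}$ is bounded in the finite-dimensional range of $H$, pass to a Cauchy subsequence and pick $s_0\leq n_k<m_k$ from it with $\norm{H(e_{n_kn_k}-e_{m_km_k})}_Y<\epsilon_k/2$.
Now the two terms of $Je_{kk}$ come from the \emph{same} sequence, which is exactly where the minus sign in $J$ is needed. The argument is uniform in $F$, so no separate treatment of $\ell_1$ is required. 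With this correction your Steps 2, 4 and 5 go through as in the paper.
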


\begin{proof}
Choose $\epsilon_k>0$ with $\sum_k(2k-1)\epsilon_k<\eta$, and put $N_0=0$.
Suppose that $n_i,m_i$ and $N_i$ have been chosen for $1\leq i<k$,
and put $H=H_{N_{k-1}}T$.

For each fixed $r$, the sequences $(e_{rs})_{s\geq1}$ in row $r$
and $(e_{sr})_{s\geq1}$ in column $r$ are weakly null in $C_F$,
since the corresponding row and column spaces are isometric to $\ell_2$.
Since $H$ is bounded and has finite rank, their images under $H$
converge to zero in norm. We can therefore pick an integer $s_0\geq1$,
larger than all previously chosen coordinates, such that for every
$s\geq s_0$ and every $r\in\{n_i,m_i:i<k\}$,
\[
 \norm{He_{rs}}_Y<\epsilon_k/4,\qquad
 \norm{He_{sr}}_Y<\epsilon_k/4.
\]
The bounded sequence $(He_{ss})_{s\geq s_0}$ lies in the finite-dimensional
range of $H$, so it has a Cauchy subsequence. Pick two indices
$s_0\leq n_k<m_k$ from this subsequence such that
\[
 \norm{H(e_{n_kn_k}-e_{m_km_k})}_Y<\epsilon_k/2.
\]
For $\max(i,j)=k$, put $z_{ij}=e_{n_in_j}-e_{m_im_j}$.
Then $\norm{Hz_{ij}}_Y<\epsilon_k/2$. Indeed, for $i=j=k$ this follows
from the preceding choice of $n_k,m_k$. If $i\ne j$, one index equals
$k$ and the other is less than $k$. Since $n_k,m_k\geq s_0$, the row
and column estimates above give
\[
 \norm{Hz_{ij}}_Y
 \leq\norm{He_{n_in_j}}_Y+\norm{He_{m_im_j}}_Y
 <\epsilon_k/4+\epsilon_k/4=\epsilon_k/2.
\]
Moreover, since $H_Ny\to y$ in norm for every $y\in Y$,  we can pick
$N_k>N_{k-1}$ so that
\[
 \norm{(I-H_{N_k})Tz_{ij}}_Y<\epsilon_k/2
 \qquad(\max(i,j)=k),
\]
and define $Ve_{ij}=Q_kTz_{ij}$. Thus
$\norm{Tz_{ij}-Ve_{ij}}_Y<\epsilon_k$.

Define $U_0,U_1$ by $U_0e_i=e_{n_i}$ and $U_1e_i=e_{m_i}$.
Then $Je_{ij}=z_{ij}$. Define $Lz=U_0^*zU_0$.
We have $\norm{L}\leq1$ and $LJ=I$. Together with the triangle
inequality, this gives
\begin{equation}\label{pg:paired-copy}
 \norm{Jx}_{C_F}\geq\norm{x}_{C_F},\qquad
 \norm{Jx}_{C_F}\leq2\norm{x}_{C_F}.
\end{equation}
For a finite matrix $x$, using $\abs{x_{ij}}\leq\norm{x}_{C_F}$, we have
\[
\begin{aligned}
 \norm{(TJ-V)x}_Y
 &\leq\sum_k\sum_{\max(i,j)=k}
       \abs{x_{ij}}\norm{Tz_{ij}-Ve_{ij}}_Y\\
 &\leq\sum_k\sum_{\max(i,j)=k}\abs{x_{ij}}\epsilon_k\\
 &\leq\Bigl(\sum_k(2k-1)\epsilon_k\Bigr)\norm{x}_{C_F}.
\end{aligned}
\]
Consequently,
\[
\begin{aligned}
 \norm{Vx}_Y
 &\geq a\norm{Jx}_{C_F}-\eta\norm{x}_{C_F}
 \overset{\eqref{pg:paired-copy}}{\geq}(a-\eta)\norm{x}_{C_F},\\
 \norm{Vx}_Y
 &\leq b\norm{Jx}_{C_F}+\eta\norm{x}_{C_F}
 \overset{\eqref{pg:paired-copy}}{\leq}(2b+\eta)\norm{x}_{C_F}.
\end{aligned}
\]
for every finite matrix $x$.
Since finite matrices are dense in $C_F$ and $Y$ is complete,
$V$ extends to a bounded linear map on $C_F$.
Since $H_NH_M=H_{\min(N,M)}$, we have
$Q_kQ_\ell=\delta_{k\ell}Q_\ell$.
For $\max(i,j)=\ell$, the definition $Ve_{ij}=Q_\ell Tz_{ij}$ therefore gives
\[
 Q_kVe_{ij}=\delta_{k\ell}Ve_{ij}=V\Pi_ke_{ij}.
\]
By linearity, $Q_kV=V\Pi_k$ holds on finite matrices.
Since both maps are bounded and finite matrices are dense in $C_F$,
the identity holds on all of $C_F$.

Finally, suppose that $ST=I$ and $\eta\norm{S}<1$.
Set $B=LS$. Then $BTJ=I$ and
\[
 \norm{BV-I}=\norm{B(V-TJ)}<\eta\norm{S}<1.
\]
By \cite[Proposition 2.5]{Douglas}, $R_1=(BV)^{-1}B$, with $R_1V=I$ and the norm bound. Hence $VR_1$ is a bounded projection onto $V(C_F)$.
\end{proof}

\begin{remark}\label{pg:diagonal-endpoint}
The sequence of diagonal matrix units $e_{11},e_{22},e_{33},\ldots$ is not weakly null in $C_{\ell_1}=C_1$. Indeed, the trace is a bounded
linear functional on $C_1$ \cite[Theorem 3.1]{Simon}, and
$\operatorname{Tr}(e_{ss})=1$ for every $s\geq1$.
The proof of Lemma~\ref{pg:full-reproduction} uses a Cauchy subsequence
of their images under the maps $H_NT$ and takes pairwise differences.
It therefore applies without a weak-nullity assumption on the
diagonal units.
\end{remark}

\begin{example}
The following coordinate models illustrate the notation in
Lemma~\ref{pg:full-reproduction}. First, take
\[
 U_0e_i=e_{2i-1},\qquad U_1e_i=e_{2i}\quad(i\geq1).
\]
The operator $U_0xU_0^*$ places a copy of $x$ in the odd rows and
columns, whereas $U_1xU_1^*$ places a copy in the even rows and columns.
Thus $Jx$ consists of two copies on disjoint coordinate subspaces,
with a minus sign on the second copy. Figure~\ref{fig:coordinate-copies}
shows this for $x$ supported on the first two coordinates.
Reordering the first four coordinates as $(1,3,2,4)$ puts these copies
into two diagonal blocks. In general, relative to
$\ell_2=U_0\ell_2\oplus U_1\ell_2$, the operator $Jx$ has the block form
$x\oplus(-x)$.

\begin{figure}[htbp]
\centering
\newcommand{\pgpos}[1]{\textcolor{pgcopypositive}{#1}}
\newcommand{\pgneg}[1]{\textcolor{pgcopynegative}{#1}}
\setlength{\arraycolsep}{4pt}
\[
 \begin{pmatrix}a&b\\c&d\end{pmatrix}
 \xrightarrow{\ J\ }
 \begin{pmatrix}
  \pgpos{a}&0&\pgpos{b}&0\\
  0&\pgneg{-a}&0&\pgneg{-b}\\
  \pgpos{c}&0&\pgpos{d}&0\\
  0&\pgneg{-c}&0&\pgneg{-d}
 \end{pmatrix}
 \xrightarrow{\ (1,3,2,4)\ }
 \left(\begin{array}{cc|cc}
  \pgpos{a}&\pgpos{b}&0&0\\
  \pgpos{c}&\pgpos{d}&0&0\\\hline
  0&0&\pgneg{-a}&\pgneg{-b}\\
  0&0&\pgneg{-c}&\pgneg{-d}
 \end{array}\right).
\]
\caption{The two coordinate copies in $Jx$. The last matrix uses the
same reordering of both rows and columns; all undisplayed entries are zero.}
\label{fig:coordinate-copies}
\end{figure}

For the target projections, take $Y=\ell_2$ and
\[
 H_N(y_1,y_2,\ldots)=(y_1,\ldots,y_N,0,0,\ldots).
\]
Choose $N_0=0$, $N_1=2$, and $N_2=5$. Then
\[
 Q_1y=H_2y,\qquad
 Q_2y=(H_5-H_2)y=(0,0,y_3,y_4,y_5,0,\ldots).
\]
Thus $H_{N_k}$ retains the first $k$ selected blocks, while $Q_k$
retains only the $k$th block, as shown in
Figure~\ref{fig:coordinate-projections}. More generally,
$Q_ky=\sum_{j=N_{k-1}+1}^{N_k}y_je_j$ in this coordinate model.

\begin{figure}[htbp]
\centering
\newcommand{\pgold}[1]{\colorbox{black!8}{\strut$#1$}}
\newcommand{\pgnew}[1]{\colorbox{pgcopypositive!15}{\strut$#1$}}
\setlength{\tabcolsep}{5pt}
\renewcommand{\arraystretch}{1.35}
\begin{tabular}{c|cccccccc|c}
 Coordinate & $1$ & $2$ & $3$ & $4$ & $5$ & $6$ & $7$ & $8$ & $\cdots$\\\hline
 $y$ & $y_1$ & $y_2$ & $y_3$ & $y_4$ & $y_5$ & $y_6$ & $y_7$ & $y_8$ & $\cdots$\\
 $H_2y$ & \pgold{y_1} & \pgold{y_2} & $0$ & $0$ & $0$ & $0$ & $0$ & $0$ & $\cdots$\\
 $H_5y$ & \pgold{y_1} & \pgold{y_2} & \pgnew{y_3} & \pgnew{y_4} & \pgnew{y_5} & $0$ & $0$ & $0$ & $\cdots$\\\hline
 $Q_2y$ & $0$ & $0$ & \pgnew{y_3} & \pgnew{y_4} & \pgnew{y_5} & $0$ & $0$ & $0$ & $\cdots$
\end{tabular}
\caption{The difference $Q_2=H_5-H_2$ retains precisely coordinates
$3,4,5$. The shaded entries of $H_2y$ cancel when it is subtracted
from $H_5y$.}
\label{fig:coordinate-projections}
\end{figure}

These models illustrate the maps separately. In
Lemma~\ref{pg:full-reproduction}, the coordinate copies and the cutoffs
$N_k$ are chosen using $T$. The identity $Q_kV=V\Pi_k$ means that
applying $V$ to the $k$th source shell gives exactly the $k$th target
block of $Vx$:
\[
 V(\Pi_kx)=Q_k(Vx)\qquad(x\in C_F).
\]
\end{example}

\section{Complemented embeddings into the subspace \texorpdfstring{$S_E$}{S\_E}}
\label{pg:sec-pure}

Let $(d_t)_{t\geq1}$ be positive integers and let
\[
 Z=\overline{\bigoplus_{t\geq1}M_{d_t}(\K)}^{\,C_E},\qquad
 \norm{(z_t)_t}_Z=\norm{s(\diag(z_1,z_2,\ldots))}_E.
\]
For $d_t=t$, this space is $S_E$. Finite direct sums of the spaces
$M_{d_t}(\K)$ carry the norm inherited from $Z$.
For $N\ge0$, define the coordinate projection $H_N:Z\to Z$ by
\[
 (H_Nz)_t=
 \begin{cases}
 z_t,&1\le t\le N,\\
 0,&t>N.
 \end{cases}
\]
In particular, $H_0=0$.
We use the projections $\Pi_n$ defined in \eqref{pg:shells}.

\begin{proposition}\label{pg:reproduction}
Suppose that \(T:C_F\to Z\) satisfies
\[
 a\norm{x}_{C_F}\le \norm{Tx}_Z\le b\norm{x}_{C_F}
 \qquad (x\in C_F),
 \qquad a>0.
\]
For every \(0<\eta<a\), there are increasing disjoint coordinate
sequences \((n_i)\), \((m_i)\), integers
\(0=N_0<N_1<N_2<\cdots\), and an embedding \(V:C_F\to Z\)
such that, with
\[
 \begin{gathered}
 U_ne_i=e_{n_i},\qquad U_me_i=e_{m_i},\\
 Jx=U_nxU_n^*-U_mxU_m^*,\qquad Q_k=H_{N_k}-H_{N_{k-1}},
 \end{gathered}
\]
one has
\begin{equation}\label{pg:reproduction-bounds}
 \begin{gathered}
 \norm{V-TJ}<\eta,\qquad Q_kV=V\Pi_k,\\
 (a-\eta)\norm{x}_{C_F}\le\norm{Vx}_Z
 \le(2b+\eta)\norm{x}_{C_F}.
 \end{gathered}
\end{equation}
If in addition \(S:Z\to C_F\) is bounded and \(ST=I\), choose
\(\eta\norm{S}<1\). There is then a bounded map \(R:Z\to C_F\) such that
\begin{equation}\label{pg:reproduction-return}
 \begin{gathered}
 RV=I,\qquad \Pi_kR=RQ_k\quad(k\ge1),\\
 \norm{R}\le
 \frac{\norm{V}}{a-\eta}\,
 \frac{\norm{S}}{1-\eta\norm{S}}.
 \end{gathered}
\end{equation}
\end{proposition}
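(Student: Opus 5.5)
The plan is to apply Lemma~\ref{pg:full-reproduction} with $Y=Z$ and the coordinate projections $H_N$ just defined, and then to average the resulting left inverse so that it also intertwines the shells.

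\textbf{Checking the hypotheses on $H_N$.} Each $H_N$ is a finite-rank projection, since its range is $\bigoplus_{t\le N}M_{d_t}(\K)$, which is finite dimensional. We have $H_0=0$ and $H_NH_M=H_{\min(N,M)}$. The bound $\norm{H_N}\le1$ follows from \eqref{eq:orthogonal-projections}: take $u_i=v_i$ to be the Hilbert projections onto the first $N$ blocks, so that $H_N$ is a compression. Finally, $H_Nz\to z$ in norm because the projections onto the first $N$ blocks tend to $I$ strongly on the underlying Hilbert space, and compressions converge in ideal norm as recorded in the preliminaries.

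\textbf{Applying the lemma.} Lemma~\ref{pg:full-reproduction} now yields $U_0=U_n$, $U_1=U_m$ given by $U_ne_i=e_{n_i}$ and $U_me_i=e_{m_i}$, together with $N_k$ and $V$. The construction in that proof picks $n_k<m_k$ beyond all previously chosen coordinates, so both index sequences are increasing and disjoint. This gives all of \eqref{pg:reproduction-bounds} directly. If $ST=I$ and $\eta\norm{S}<1$, the lemma also gives a left inverse $R_1$ with $R_1V=I$ and $\norm{R_1}\le\norm{S}/(1-\eta\norm{S})$.

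\textbf{Averaging $R_1$ over shells.} Since $R_1$ need not commute with the shells, set
\[
 R=\sum_{k}\Pi_kR_1Q_k ,
\]
interpreted strongly. On $V(C_F)$, using $Q_kV=V\Pi_k$ and $R_1V=I$, we get
\[
 RVx=\sum_k\Pi_kR_1V\Pi_kx=\sum_k\Pi_kx=x ,
\]
because the shells sum to $x$: $P_n x\to x$ in ideal norm. The intertwining relation $\Pi_kR=\Pi_kR_1Q_k=RQ_k$ is then formal.

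\textbf{The main obstacle: bounding $R$.} The difficulty is showing that $R$ is bounded with the stated constant. The sum $\sum_k\Pi_kR_1Q_k$ is not obviously bounded, since shell projections in $C_F$ need not be unconditional. To avoid this I would use the alternative formula
\[
 R=V^{-1}\circ\Bigl(\sum_kQ_k\,VR_1\,Q_k\Bigr),
\]
where $V^{-1}$ is defined on $V(C_F)$ with $\norm{V^{-1}}\le(a-\eta)^{-1}$. This agrees with the previous definition because
\[
 Q_kVR_1Q_k=V\Pi_kR_1Q_k .
\]
The inner operator is a block-diagonal compression along disjoint block families in $Z$, so by \eqref{eq:orthogonal-projections} its norm is at most $\norm{VR_1}\le\norm{V}\norm{R_1}$. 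Its range lies in $V(C_F)$: each term $V\Pi_kR_1Q_k z$ lies there, and the partial sums converge because $V(C_F)$ is closed. Applying $V^{-1}$ therefore gives
\[
 \norm{R}\le\frac{\norm{V}}{a-\eta}\cdot\frac{\norm{S}}{1-\eta\norm{S}},
\]
which is exactly the bound in \eqref{pg:reproduction-return}. The delicate step is this convergence of the block-diagonal series and the identification of its limit as lying in $V(C_F)$. I would handle it by proving the bound for finitely supported $z$ first and then extending by density.
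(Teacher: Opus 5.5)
Everything up to the bound on $R$ is fine, and your $R$ is the paper's $R$: on $H_{N_N}Z$ both are $z\mapsto\sum_{k\le N}\Pi_kR_1Q_kz$. There is one genuine gap, in the step you single out as the main obstacle. Inequality \eqref{eq:orthogonal-projections} controls the compression $x\mapsto\sum_iu_ixv_i$ of a \emph{single} operator $x$. Apply it to an element $w\in Z$ with the Hilbert projections underlying the $Q_k$, and it only returns $\sum_kQ_kw=w$, so it says nothing here. The map $z\mapsto\sum_kQ_kWQ_kz$, with $W=VR_1$, is instead the diagonal of the \emph{operator} $W$ with respect to the decomposition $Z=\bigoplus_kQ_kZ$. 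Here $W$ acts on the different pieces $Q_kz$ before the compression. No compression inequality for individual elements bounds such an operator diagonal.

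What is needed is the sign invariance of this decomposition. For signs $\tau_1,\dots,\tau_N$, the operator $D_\tau=I+\sum_{k\le N}(\tau_k-1)Q_k$ multiplies each block $z_t$ by $\pm1$. It therefore preserves singular values and is an isometry of $Z$. For $z\in H_{N_N}Z$, expanding gives $\sum_{k\le N}Q_kWQ_kz=\E_\tau[D_\tau WD_\tau z]$, hence $\norm{\sum_{k\le N}Q_kWQ_kz}\le\norm{W}\norm{z}$.

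With this justification, your finite-support-then-density argument gives exactly the stated constant. The proof then coincides with the paper's, except for where $V^{-1}$ is placed. The paper averages $M_\tau R_1D_\tau$ with $M_\tau=I+\sum_k(\tau_k-1)\Pi_k$, and bounds $\norm{M_\tau}\le\norm{V}/(a-\eta)$ via $VM_\tau=D_\tau V$, i.e.\ $M_\tau=V^{-1}D_\tau V$. You instead average $D_\tau VR_1D_\tau$ and apply $V^{-1}$ afterwards. Your observation that the shells $\Pi_k$ need not be unconditional in $C_F$ is exactly why the signs have to be transported to $Z$, where they act isometrically.
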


\begin{proof}
Apply Lemma~\ref{pg:full-reproduction} with $Y=Z$ and the projections $H_N$.
It gives $V,J,Q_k$ satisfying \eqref{pg:reproduction-bounds}.
Suppose now that $ST=I$ and $\eta\norm{S}<1$.
Lemma~\ref{pg:full-reproduction} gives a left inverse $R_1$ with
\[
 R_1V=I,\qquad \norm{R_1}\leq\frac{\norm{S}}{1-\eta\norm{S}}.
\]
We average over the choices of signs $\tau_k\in\{-1,1\}$.

Let \(\tau_k\in\{-1,1\}\), with only finitely many signs different
from \(1\), and define
\[
 M_\tau=I+\sum_k(\tau_k-1)\Pi_k,\qquad
 D_\tau=I+\sum_k(\tau_k-1)Q_k.
\]
For \(z=(z_t)_t\in Z\),
\[
 (D_\tau z)_t=\tau_kz_t
 \qquad(N_{k-1}<t\le N_k).
\]
Multiplication of \(z_t\in M_{d_t}(\K)\) by \(\tau_k\in\{-1,1\}\)
preserves its singular values, so \(D_\tau\) is an isometry on \(Z\).
Since \(VM_\tau=D_\tau V\), the lower bound for \(V\) yields
\begin{equation}\label{pg:shell-sign-bound}
 \norm{M_\tau}\le K_0,\qquad K_0=\frac{\norm{V}}{a-\eta}.
\end{equation}

For \(y=\sum_{k=1}^N y_k\), where \(y_k=Q_ky_k\), define
\[
 Ry=\sum_{k=1}^N\Pi_kR_1y_k.
\]
For this $y$, one has
\begin{equation}\label{pg:finite-return-average}
 Ry=\E_\tau[M_\tau R_1D_\tau y],
\end{equation}
where \(\tau_1,\ldots,\tau_N\) are independent signs and all later
signs are \(1\). To check the identity, expand
\(M_\tau=I+\sum_{\ell=1}^N(\tau_\ell-1)\Pi_\ell\).
The coefficient of \(R_1y_k\) in the identity term has mean zero,
whereas
\[
 \E_\tau[(\tau_\ell-1)\tau_k]=\delta_{\ell k}.
\]
Thus the average equals \(\sum_{k=1}^N\Pi_kR_1y_k\) and belongs
to \(P_NC_F\). Equations
\eqref{pg:shell-sign-bound}--\eqref{pg:finite-return-average}
give \(\norm{Ry}\le K_0\norm{R_1}\norm{y}\), uniformly in \(N\).
The formulas for \(R\) agree on the nested subspaces
\(H_{N_m}Z\), \(m\ge1\). Their union is dense in \(Z\), since
\(N_m\to\infty\), so \(R\) extends boundedly to \(Z\).

For a finite matrix \(x\),
\[
 RVx
 =\sum_k\Pi_kR_1V\Pi_kx
 =\sum_k\Pi_kx=x.
\]
The definition also gives \(\Pi_kR=RQ_k\) on
\(\bigcup_{m\ge1}H_{N_m}Z\).
Both identities extend by continuity. Since
$\norm{R_1}\le\norm{S}/(1-\eta\norm{S})$, this proves
\eqref{pg:reproduction-return}.
\end{proof}

For a finite set \(G\subset\N\), put
\[
 Z_G=\{z=(z_t)_t\in Z:z_t=0\text{ for }t\notin G\}
     =\bigoplus_{t\in G}M_{d_t}(\K),
\]
with the norm inherited from \(Z\). A pair of block multipliers
on \(Z_G\) means families \(a=(a_t)_{t\in G}\),
\(b=(b_t)_{t\in G}\), with \(a_t,b_t\in M_{d_t}(\K)\), satisfying
\[
 \norm{a_t}\le1,\qquad \norm{b_t}\le1\quad(t\in G).
\]
The map \(z\mapsto azb\), defined by \((azb)_t=a_tz_tb_t\), is
contractive by the ideal property.

\begin{lemma}\label{pg:mixed-inequality}
Let \(G\subset\N\) be finite, and let
\(B_1,B_2,D_1,D_2\in Z_G\), with matrix coordinates
\(B_{j,t},D_{i,t}\in M_{d_t}(\K)\), \(t\in G\). Define
\[
 \begin{gathered}
 u_{ij}(a,b)=\sum_{t\in G}\operatorname{Tr}
                  (D_{i,t}^*a_tB_{j,t}b_t),\\
 c_{ij}=\max_{a,b}\abs{u_{ij}(a,b)},\qquad
 \Delta=\max_{a,b}\abs{u_{11}(a,b)-u_{22}(a,b)},
 \end{gathered}
\]
where the maxima range over the block multipliers just defined.
Then
\begin{equation}\label{pg:mixed-estimate}
 c_{11}\le\sqrt{c_{21}c_{12}}+\Delta.
\end{equation}
Moreover,
\begin{equation}\label{pg:block-maximum}
 c_{ij}=\sum_{t\in G}\sum_\ell
                 s_\ell(B_{j,t})s_\ell(D_{i,t}).
\end{equation}
\end{lemma}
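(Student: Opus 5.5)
The plan is to prove \eqref{pg:block-maximum} first and then use it to reduce \eqref{pg:mixed-estimate} to a blockwise comparison that mimics a scalar inequality.

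\emph{Step 1: the formula \eqref{pg:block-maximum}.} The constraint set is a product over $t\in G$ of pairs of contractions. We may also multiply $a_t$ by a unimodular scalar without leaving it. Hence the maximum of $\abs{u_{ij}}$ equals the sum over $t$ of the blockwise maxima of $\abs{\operatorname{Tr}(D_{i,t}^*a_tB_{j,t}b_t)}$: rotate the phase of each block so that all block terms become nonnegative.

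For a single block, von Neumann's trace inequality gives
\[
 \abs{\operatorname{Tr}(D^*aBb)}\le\sum_\ell s_\ell(D^*a)s_\ell(Bb)\le\sum_\ell s_\ell(D)s_\ell(B),
\]
using $s_\ell(XY)\le\norm{X}s_\ell(Y)$ and its mirror image. For equality, write singular value decompositions $B=U_B\Sigma_BW_B^*$ and $D=U_D\Sigma_DW_D^*$, and take the unitaries $a=U_DU_B^*$ and $b=W_BW_D^*$. Then $\operatorname{Tr}(D^*aBb)=\operatorname{Tr}(\Sigma_D\Sigma_B)$.

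\emph{Step 2: the scalar model.} For \eqref{pg:mixed-estimate} I would first check the case $d_t=1$. Put $x_t=\abs{D_{1,t}B_{1,t}}$ and $y_t=\abs{D_{2,t}B_{2,t}}$. Then
\[
 c_{11}=\sum_t x_t,\qquad \Delta=\sum_t\abs{\overline{D_{1,t}}B_{1,t}-\overline{D_{2,t}}B_{2,t}}\ge\sum_t\abs{x_t-y_t}.
\]
Here the equality for $\Delta$ comes from choosing the phases of $a_tb_t$ blockwise, and the inequality is the reverse triangle inequality. The elementary bound $x\le\min(x,y)+\abs{x-y}\le\sqrt{xy}+\abs{x-y}$ now applies termwise. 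The key identity is $x_ty_t=\abs{D_{2,t}B_{1,t}}\,\abs{D_{1,t}B_{2,t}}$. By Cauchy--Schwarz,
\[
 \sum_t\sqrt{x_ty_t}\le\Bigl(\sum_t\abs{D_{2,t}B_{1,t}}\Bigr)^{1/2}\Bigl(\sum_t\abs{D_{1,t}B_{2,t}}\Bigr)^{1/2}=\sqrt{c_{21}c_{12}}.
\]
This gives \eqref{pg:mixed-estimate} when $d_t=1$.

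\emph{Step 3: the matrix case (main obstacle).} I would fix multipliers $a,b$ aligned with $(D_1,B_1)$ as in Step 1, so that $u_{11}(a,b)=c_{11}$. Then
\[
 c_{11}\le\abs{u_{22}(a,b)}+\Delta,
\]
and it remains to show $\abs{u_{22}(a,b)}\le\sqrt{c_{21}c_{12}}$, or a variant of this with a suitably modified choice of multipliers. The natural attempt is to factor the aligned contractions through $\Sigma^{1/2}$-type weights. Write $a=U_{D_1}U_{B_1}^*$ and split $\operatorname{Tr}(D_2^*aB_2b)$ as a Hilbert--Schmidt pairing $\operatorname{Tr}(X^*Y)$ with $X$ built from $D_2$ and $B_1$ and $Y$ built from $D_1$ and $B_2$, mirroring the identity $x_ty_t=\abs{D_2B_1}\abs{D_1B_2}$. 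Then $\abs{\operatorname{Tr}(X^*Y)}\le\norm{X}_2\norm{Y}_2$, and each of $\norm{X}_2^2$ and $\norm{Y}_2^2$ should be of the form $\abs{u_{21}(a',b')}$, respectively $\abs{u_{12}(a'',b'')}$, for suitable contractions. Step 1 then bounds them by $c_{21}$ and $c_{12}$.

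This factorization is where I expect the difficulty, because the matrices need not commute. If it fails directly, I would fall back on one of two routes. The first is to replace $a,b$ by compressed multipliers $U_{D_1}PU_{B_1}^*$, with $P$ spectral projections, which reduces to simultaneously diagonal pieces where the scalar argument of Step 2 applies verbatim. The second is to average $u_{11}-u_{22}$ over unitary phase multipliers, to isolate the diagonal terms in the singular bases of $(D_1,B_1)$; these are then controlled by Step 2 together with the bound $\Delta$ on all such multipliers.
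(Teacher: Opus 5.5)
Your Steps 1 and 2 are correct. Step 1 is the paper's proof of \eqref{pg:block-maximum}, and Step 2 is a clean proof of \eqref{pg:mixed-estimate} when every $d_t=1$. The gap is Step 3, which is the real content of the lemma, and your main line there aims at a false inequality. For a single scalar block with $D_1=B_1=1$ and $D_2=B_2=2$, the aligned multipliers satisfy $ab=1$, so $\abs{u_{22}(a,b)}=4$, while $\sqrt{c_{21}c_{12}}=2$. The failure is structural. Step 2 worked only through the termwise bound $x_t\le\min(x_t,y_t)+\abs{x_t-y_t}$. A single comparison $c_{11}\le\abs{u_{22}(a,b)}+\Delta$ followed by a bound on $u_{22}$ discards that termwise minimum, so no Hilbert--Schmidt factorization of $\operatorname{Tr}(D_2^*aB_2b)$ can close the argument as you set it up. Your fallback (1) also does not work as stated: spectral projections of $D_1,B_1$ do not make $D_2,B_2$ diagonal, so Step 2 does not apply verbatim.

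Your fallback (2) is the right idea, and it is essentially the paper's proof, but it has to be carried out. The missing steps are as follows.
\begin{enumerate}
\item Reduce to diagonal data. Take SVDs $B_1=UXV^*$ and $D_1^*=WYZ^*$, with independent unitaries on the two sides. Substitute $a'=Z^*aU$, $b'=V^*bW$, $B_j'=U^*B_jV$ and $C_i=W^*D_i^*Z$. This preserves all four forms and every maximum, and it makes $B_1'=X=\diag(x_i)$ and $C_1=Y=\diag(y_i)$.
\item Bound the diagonal discrepancy by $\Delta$. Take $a'=\diag(\varepsilon_i)$ and $b'=\diag(\varepsilon_i\theta_i)$, using the same signs in both factors. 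Averaging over $\varepsilon$ removes the off-diagonal terms and gives $\abs{\sum_i(x_iy_i-z_i)\theta_i}\le\Delta$, where $z_i=(C_2)_{ii}(B_2')_{ii}$. Choosing the unimodular $\theta_i$ then yields $\sum_i\abs{x_iy_i-z_i}\le\Delta$.
\item Bound the cross terms. Diagonal multipliers in $u_{21}$ and $u_{12}$ give $\sum_ix_i\abs{(C_2)_{ii}}\le c_{21}$ and $\sum_iy_i\abs{(B_2')_{ii}}\le c_{12}$.
\end{enumerate}
These are exactly the hypotheses of your Step 2 for the scalar data $y_i$, $x_i$, $\overline{(C_2)_{ii}}$, $(B_2')_{ii}$ indexed by $i$, because $x_iy_i\abs{z_i}=x_i\abs{(C_2)_{ii}}\cdot y_i\abs{(B_2')_{ii}}$. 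Step 2 then gives $c_{11}=\sum_ix_iy_i\le\sqrt{c_{21}c_{12}}+\Delta$. For general $G$ you can either index the signs by pairs $(t,i)$ and treat all blocks at once, or prove the single-block case and sum. The second route uses the fact that $c_{ij}$ and $\Delta$ are additive over blocks, since each block term is homogeneous in $a_t$, and then applies Cauchy--Schwarz, as the paper does.
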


\begin{proof}
First fix \(d\ge1\) and take \(B,D\in M_d(\K)\). Then
\begin{equation}\label{pg:trace-maximum}
 \max_{\norm{a},\norm{b}\le1}
 \abs{\operatorname{Tr}(D^*aBb)}
 =\sum_\ell s_\ell(B)s_\ell(D).
\end{equation}
For the upper bound, use von Neumann's trace inequality
\cite{Bhatia} and $s_\ell(aBb)\leq s_\ell(B)$ for contractions $a,b$.
Singular value decompositions show that the bound is attained.

To prove \eqref{pg:mixed-estimate} when $G=\{t\}$, regard
$B_1,B_2,D_1,D_2$ as matrices in $M_{d_t}(\K)$. Choose decompositions
\[
 B_1=UXV^*,\qquad D_1^*=WYZ^*,
\]
where $X=\diag(x_i)$ and $Y=\diag(y_i)$ have nonnegative decreasing
entries. In all four forms make the same change of variables
\[
 a'=Z^*aU,\qquad b'=V^*bW,\qquad
 B'_j=U^*B_jV,\qquad C_i=W^*D_i^*Z.
\]
Then $B'_1=X$, $C_1=Y$, and
\[
 \operatorname{Tr}(D_i^*aB_jb)
       =\operatorname{Tr}(C_i a'B'_j b').
\]
The change of variables maps the contraction balls onto themselves,
so all four maxima and $\Delta$ are preserved.

Put \(z_i=(C_2)_{ii}(B'_2)_{ii}\). For independent signs
\(\varepsilon_i\), take
\[
 a'=\diag(\varepsilon_i),\qquad
 b'=\diag(\varepsilon_i t_i),\qquad t_i\in\K,\quad \abs{t_i}=1.
\]
Averaging $u_{11}-u_{22}$ eliminates the off-diagonal terms and
gives \(\sum_i(x_iy_i-z_i)t_i\).
Its absolute value is at most \(\Delta\). Choose \(t_i\) to align
these terms and obtain
\begin{equation}\label{pg:diagonal-discrepancy}
 \sum_i\abs{x_iy_i-z_i}\le\Delta.
\end{equation}
Diagonal choices of $a'$ and $b'$ in $u_{21}$ and $u_{12}$ give
\[
 \sum_i x_i\abs{(C_2)_{ii}}\le c_{21},\qquad
 \sum_i y_i\abs{(B'_2)_{ii}}\le c_{12}.
\]
For \(w\ge0\) and \(z\in\K\),
\(w\le\sqrt{w\abs{z}}+\abs{w-z}\). Indeed, if \(\abs{z}\ge w\), the
square-root term suffices; if \(\abs{z}\le w\), use
\(\abs{w-z}\ge w-\abs{z}\) and \(\sqrt{w\abs{z}}\ge\abs{z}\).
Consequently, \eqref{pg:trace-maximum},
\eqref{pg:diagonal-discrepancy}, and Cauchy--Schwarz yield
\[
 c_{11}=\sum_i x_iy_i
 \le
 \left(\sum_i x_i\abs{(C_2)_{ii}}\right)^{1/2}
 \left(\sum_i y_i\abs{(B'_2)_{ii}}\right)^{1/2}
 +\Delta
 \le\sqrt{c_{21}c_{12}}+\Delta.
\]

For a general finite set $G$ and each \(t\in G\), let
\(c_{ij}^{\,t}\) and \(\Delta_t\) be the
maxima defining \(c_{ij}\) and \(\Delta\), respectively, with the
trace sums restricted to the single index \(t\). All maxima are attained, since the
contraction balls are compact. For each \(t\in G\), independently
multiply a maximizing left factor \(a_t\) by a scalar of modulus one
to make its value nonnegative.
The triangle inequality and these aligned choices show that
\[
 c_{ij}=\sum_{t\in G}c_{ij}^{\,t},\qquad
 \Delta=\sum_{t\in G}\Delta_t.
\]
The second identity holds because $u_{11}-u_{22}$ is homogeneous
in the left factor. Summing the
single-block inequality and applying Cauchy--Schwarz gives
\[
 c_{11}
 \le\sum_{t\in G}\sqrt{c_{21}^{\,t}c_{12}^{\,t}}
      +\sum_{t\in G}\Delta_t
 \le\sqrt{c_{21}c_{12}}+\Delta.
\]
Applying \eqref{pg:trace-maximum} to
\(B_{j,t},D_{i,t}\in M_{d_t}(\K)\) for each \(t\in G\) proves
\eqref{pg:block-maximum}. For each fixed \(t\), the singular values
of \(B_{j,t}\) and \(D_{i,t}\) are listed in decreasing order.
\end{proof}

For \(d\ge2\), an invariant orthonormal pair in \(\K^d\) means two
specified distinct columns of a Haar-distributed element of
\(\{U\in M_d(\K):U^*U=I_d\}\).

\begin{lemma}\label{pg:haar-return}
Let \(G\subset\N\) be finite, and let \(Z_G\) have
the norm inherited from \(Z\). Suppose that
\[
 J:\ell_2^d\longrightarrow Z_G,\qquad
 T:Z_G\longrightarrow\ell_2^d,\qquad TJ=I,\qquad d\ge2.
\]
For unit vectors \(u,w\), set
\[
 \beta(u,w)(a,b)=\langle T(aJ(u)b),w\rangle,\qquad
 c(u,w)=\max_{a,b}\abs{\beta(u,w)(a,b)}.
\]
Then, for an invariant orthonormal pair \((u,w)\),
\begin{equation}\label{pg:haar-lower-bound}
 \E c(u,w)\ge\frac13.
\end{equation}
The function \(c\) is continuous, and, with \(k=\norm{J}\norm{T}\),
\begin{equation}\label{pg:coefficient-continuity}
 \begin{gathered}
 0\le c(u,w)\le k,\\
 \abs{c(u,w)-c(u',w')}
 \le k\bigl(\norm{u-u'}_2+\norm{w-w'}_2\bigr)
 \end{gathered}
\end{equation}
for unit vectors \(u,w,u',w'\).
\end{lemma}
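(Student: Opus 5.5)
The plan is to obtain \eqref{pg:coefficient-continuity} directly from the sesquilinear structure of $\beta$, and to obtain \eqref{pg:haar-lower-bound} from Lemma~\ref{pg:mixed-inequality} combined with a $45^\circ$ rotation of the orthonormal pair and the invariance of the Haar distribution.

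\emph{Continuity and the bound $c\le k$.} For fixed block multipliers $a,b$, the map $(u,w)\mapsto\beta(u,w)(a,b)$ is linear in $u$ and conjugate-linear in $w$. Since $z\mapsto azb$ is contractive on $Z_G$, we get
\[
 \abs{\beta(u,w)(a,b)}\le\norm{T}\norm{J}\norm{u}_2\norm{w}_2 .
\]
This gives $0\le c\le k$ for unit vectors. Next write
\[
 \beta(u,w)-\beta(u',w')=\beta(u-u',w)+\beta(u',w-w').
\]
Bounding each term uniformly in $(a,b)$ and using $\abs{\max f-\max g}\le\max\abs{f-g}$ yields the Lipschitz estimate, and hence continuity of $c$.

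\emph{Setting up Lemma~\ref{pg:mixed-inequality}.} $Z_G$ is finite-dimensional, so for each $w$ the functional $z\mapsto\langle Tz,w\rangle$ is represented by trace duality as $z\mapsto\sum_{t\in G}\operatorname{Tr}(D_t^*z_t)$ for some $D=D(w)\in Z_G$. Then
\[
 \beta(u,w)(a,b)=\sum_t\operatorname{Tr}\bigl(D(w)_t^*a_tJ(u)_tb_t\bigr).
\]
Fix an orthonormal pair $(u,w)$ and apply the lemma with
\[
 B_1=J(u),\quad B_2=J(w),\quad D_1=D(u),\quad D_2=D(w).
\]
Then $c_{11}=c(u,u)$, $c_{21}=c(u,w)$, $c_{12}=c(w,u)$, and $\Delta=\max_{a,b}\abs{\beta(u,u)-\beta(w,w)}$. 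Taking $a_t=b_t=I$ gives $\beta(u,u)(I,I)=\langle TJu,u\rangle=1$, so $c(u,u)\ge1$. Estimate \eqref{pg:mixed-estimate} together with the AM--GM inequality then gives
\[
 1\le\tfrac12\bigl(c(u,w)+c(w,u)\bigr)+\Delta .
\]

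\emph{Controlling $\Delta$ by a rotated pair.} Set $v_1=(u+w)/\sqrt2$ and $v_2=(u-w)/\sqrt2$; these are again orthonormal. The coefficients in this change of variables are real, so sesquilinearity gives, for every $(a,b)$,
\[
 \beta(v_1,v_2)+\beta(v_2,v_1)=\beta(u,u)-\beta(w,w)
\]
(the cross terms cancel). Hence $\Delta\le c(v_1,v_2)+c(v_2,v_1)$, and therefore
\[
 1\le\tfrac12\bigl(c(u,w)+c(w,u)\bigr)+c(v_1,v_2)+c(v_2,v_1).
\]

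\emph{Averaging.} Now let $(u,w)$ be an invariant orthonormal pair. The pairs $(w,u)$, $(v_1,v_2)$ and $(v_2,v_1)$ are obtained from $(u,w)$ by right multiplication of the Haar unitary by a fixed unitary acting on the two chosen columns. So each of them has the same distribution as $(u,w)$, and $c$ is continuous and bounded, so all expectations exist. Taking expectations gives
\[
 1\le\E c+2\,\E c=3\,\E c(u,w),
\]
which is \eqref{pg:haar-lower-bound}.

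The main obstacle is identifying the rotation trick that bounds $\Delta$, which a priori compares two diagonal terms, by off-diagonal coefficients of another orthonormal pair with the same law. The remaining points to check are routine: the conjugate-linearity conventions in the complex case (harmless because the rotation coefficients are real), and the invariance of the pair distribution under this fixed $2\times2$ unitary.
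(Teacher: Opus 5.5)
Your proposal is correct and matches the paper's proof essentially step for step. Both arguments use trace duality to apply Lemma~\ref{pg:mixed-inequality} with $B_1=Ju$, $B_2=Jw$, bound $\Delta$ through the identity $\beta(u,u)-\beta(w,w)=\beta(v_1,v_2)+\beta(v_2,v_1)$ for the $45^\circ$-rotated pair, and use invariance of the Haar pair under the swap and this fixed rotation to get $1\le 3\,\E c(u,w)$; the only cosmetic difference is that you apply the arithmetic--geometric mean inequality pointwise rather than after taking expectations.
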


\begin{proof}
Contractivity of the block multipliers proves
\eqref{pg:coefficient-continuity}, first for each scalar form and
then for its maximum. For an orthonormal pair \(u,w\), take
\(B_1=Ju\), \(B_2=Jw\). Finite-dimensional trace duality represents
the functionals \(z\mapsto\langle Tz,u\rangle\) and
\(z\mapsto\langle Tz,w\rangle\) by matrices \(D_1,D_2\in Z_G\),
with \(D_{i,t}\in M_{d_t}(\K)\), \(t\in G\). Since \(TJ=I\),
\[
 \sum_{t\in G}\operatorname{Tr}(D_{i,t}^*B_{j,t})
 =\delta_{ij}.
\]
In particular \(c(u,u)\ge1\), by choosing identity multipliers.
Lemma~\ref{pg:mixed-inequality} therefore gives
\begin{equation}\label{pg:haar-pointwise}
 1\le
 \sqrt{c(u,w)c(w,u)}
 +\norm{\beta(u,u)-\beta(w,w)},
\end{equation}
where
\[
 \norm{\beta(u,u)-\beta(w,w)}
 =\max_{a,b}\abs{\beta(u,u)(a,b)-\beta(w,w)(a,b)},
\]
and \(a,b\) range over the block multipliers on \(Z_G\).

Put \(u_+=(u+w)/\sqrt2\), \(u_-=(u-w)/\sqrt2\). They are
orthonormal, and expansion gives the identity of scalar forms
\[
 \beta(u,u)-\beta(w,w)
 =\beta(u_+,u_-)+\beta(u_-,u_+).
\]
The last term in \eqref{pg:haar-pointwise} is thus
at most \(c(u_+,u_-)+c(u_-,u_+)\).

The distribution of an invariant orthonormal pair is unchanged
by swapping its entries or by this fixed two-dimensional rotation.
If \(A=\E c(u,w)\), the arithmetic-geometric mean inequality and
the swap invariance give
\[
 \E\sqrt{c(u,w)c(w,u)}
 \le\frac12\E\bigl(c(u,w)+c(w,u)\bigr)=A.
\]
Each rotated cross term also has expectation \(A\).
Averaging \eqref{pg:haar-pointwise} proves \(1\le3A\).
\end{proof}

\begin{theorem}\label{pg:pure-hilbert}
Let \(\N=\bigsqcup_{n\ge1}G_n\), where each \(G_n\) is finite. Define \(Q_n:Z\to Z\) by
\[
 (Q_nz)_t=
 \begin{cases}
 z_t,&t\in G_n,\\
 0,&t\notin G_n.
 \end{cases}
\]
Thus \(Q_nZ=Z_{G_n}\). Suppose that bounded maps
\[
 V:C_F\longrightarrow Z,\qquad R:Z\longrightarrow C_F
\]
satisfy
\begin{equation}\label{pg:pure-identities}
 RV=I,\qquad Q_nV=V\Pi_n,\qquad \Pi_nR=RQ_n\quad(n\ge1).
\end{equation}
Writing \(\kappa=\norm{V}\norm{R}\), one has
\begin{equation}\label{pg:pure-equivalence}
 \frac{1}{3\kappa}\norm{\alpha}_2
 \le\norm{\alpha}_F
 \le3\kappa\norm{\alpha}_2
 \qquad(\alpha\in c_{00}).
\end{equation}
In particular \(F=\ell_2\) canonically, with equivalent norms.
\end{theorem}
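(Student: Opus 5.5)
The plan is to realize the two norms $\norm{\cdot}_2$ and $\norm{\cdot}_F$ as norms of two configurations of rank-one pieces placed in the shells $\Pi_nC_F$. The key tool is the transfer of Lemma~\ref{pg:haar-return} through $V$ and $R$, shell by shell.

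\emph{Step 1: a Hilbertian copy in each shell.} Fix $n\ge2$ and a set of $d\le n$ row indices $r_1<\cdots<r_d\le n$. Let $J_n:\ell_2^d\to Z_{G_n}$ be $J_n(u)=V\bigl(\sum_i u_ie_{r_in}\bigr)$, i.e.\ $V$ applied to a vector placed in column $n$. Let $T_n:Z_{G_n}\to\ell_2^d$ read off the entries $(r_i,n)$ of $Rz$. Column $n$ of $C_F$ is isometric to $\ell_2^n$, and we have $\Pi_nR=RQ_n$, $Q_nV=V\Pi_n$ and $RV=I$. Hence $T_nJ_n=I$, with $\norm{J_n}\norm{T_n}\le\kappa$. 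By Lemma~\ref{pg:haar-return}, and by the continuity in \eqref{pg:coefficient-continuity}, which lets us pass from an expectation to an actual choice, we obtain orthonormal $u_n,w_n$ and block multipliers $a_n,b_n$ on $Z_{G_n}$ with
\[
 \abs{\langle T_n(a_nJ_n(u_n)b_n),w_n\rangle}\ge\tfrac13 .
\]
Because the sets $G_n$ are disjoint, these multipliers assemble into one contractive multiplier $z\mapsto azb$ on $Z$. Unimodular scalars can be absorbed into $a_n$ to align phases.

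\emph{Step 2: the upper bound $\norm{\alpha}_F\le3\kappa\norm{\alpha}_2$.} Take $\alpha\in c_{00}$ of length $d$, choose a single large shell $N$, and put $x=\sum_i\alpha_ie_{r_iN}$. This is a single column, so $\norm{x}_{C_F}=\norm{\alpha}_2$. Then apply $V$, the multipliers, and $R$. The idea is to use the multiplier pair produced in Step~1, but now for a \emph{family} of orthonormal pairs obtained by rotating $(u,w)$, so that the output of $R$ spreads $\alpha$ over distinct rows and columns, i.e.\ onto a diagonal. By \eqref{eq:orthogonal-projections}, the diagonal compression is contractive and has norm $\norm{\alpha}_F$ up to the factor $3$. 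Equivalently, I will run the bilinear identity of Lemma~\ref{pg:mixed-inequality} with $B_j$ coming from one column and $D_i$ from the diagonal. Formula \eqref{pg:block-maximum} identifies the resulting maxima with sums of products of singular values.

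\emph{Step 3: the lower bound $\norm{\alpha}_2\le3\kappa\norm{\alpha}_F$.} This is the dual configuration. Place the $\alpha_n$ in distinct shells $n$, at entries with distinct rows and columns, so that the matrix is a permutation of a diagonal and has norm $\norm{\alpha}_F$. Apply $V$. Then use the independent shell signs $D_\tau$ (isometries, as in the proof of Proposition~\ref{pg:reproduction}) together with the Haar-chosen multipliers on each $Z_{G_n}$. These recombine the pieces after $R$ into a single column, whose norm is $\norm{\alpha}_2$. The constants are again $\norm{V}$, $\norm{R}$ and the $1/3$ from \eqref{pg:haar-lower-bound}. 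The final clause, that $F=\ell_2$ canonically, then follows from the remark in Section~\ref{sec:preliminaries} that uniform two-sided estimates on $c_{00}$ give $E=F$.

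\emph{Main obstacle.} The delicate point is Steps~2--3: turning the single-shell estimate $c(u,w)\ge1/3$ into a comparison between a column configuration and a diagonal configuration. The block multipliers act only inside $Z$, block by block, and never mix different shells. So the passage between ``one column'' and ``one diagonal'' must come from the rank-one structure inside a single large shell $N$, which contains both a full column and entries spread over many rows. My first attempt will be to work entirely inside one shell $\Pi_NC_F$ with $d$ large. There I take $B_1,B_2$ to be images of column vectors and $D_1,D_2$ to be trace-duals of diagonal-type functionals, and apply \eqref{pg:mixed-estimate} directly with the $\Delta$-term controlled by the rotation trick of Lemma~\ref{pg:haar-return}. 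If that fails, I will fall back on averaging over the row-permutation symmetry of the shell.
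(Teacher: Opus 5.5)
Your Step 2 cannot work, and neither can the plan in your ``main obstacle'' paragraph to stay inside a single shell. Every element of $\Pi_NC_F$ is supported on row $N$ and column $N$, so it has rank at most $2$. Any family of its entries lying in pairwise distinct rows and columns has at most two members. The identities $Q_NV=V\Pi_N$ and $\Pi_NR=RQ_N$, together with the fact that the block multipliers preserve $Z_{G_N}$, keep everything your pipeline produces from a column in shell $N$ inside $\Pi_NC_F$. There $\norm{x}\le\norm{x}_{C_F}\le2\norm{x}$ whatever $F$ is. So no single-shell inequality can detect $\norm{\alpha}_F$, and averaging over row permutations does not change this.

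The missing idea is to spread $\alpha$ over $m$ distinct shells $n_1,\dots,n_m>d=m+1$, with one row $e_{n_s}v_s^{\top}$, $v_s\in\K^d$, in each shell. The two norms are then encoded in the row directions. A common direction $u_0$ gives the rank-one matrix $\sum_s\alpha_se_{n_s}u_0^{\top}$, of norm $\norm{\alpha}_2$. Orthonormal directions $u_s$ give singular values $\abs{\alpha_s}$, hence norm $\norm{\alpha}_F$. Because the $G_{n_s}$ are disjoint, multipliers chosen shell by shell turn $u_s$ into $u_0$ with coefficient $c_{n_s}(u_s,u_0)$, or $u_0$ into $u_s$ with coefficient $c_{n_s}(u_0,u_s)$. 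A contractive compression then isolates exactly these coefficients. In one direction this is right multiplication by the projection onto $\overline{u_0}$, which kills the column parts since $n_s>d$. In the other it is $z\mapsto L\diag(L^*zW)W^*$ with $Le_s=e_{n_s}$ and $We_s=\overline{u_s}$. Both inequalities therefore come from multi-shell configurations. Your Step~3 has roughly this shape; Step~2 does not.

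Your Step 1 also discards what makes this work. Choosing, shell by shell, some pair $(u_n,w_n)$ with $c_n(u_n,w_n)\ge\frac13$ gives no coordination across shells. The $u_{n_s}$ need not be orthonormal and the $w_{n_s}$ need not coincide, so neither the input nor the output has the singular values you need. Similarly, coordinate units at distinct rows and columns in Step~3 get no lower bound from Lemma~\ref{pg:haar-return}. The shell signs $D_\tau$ do not recombine anything either.

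Lemma~\ref{pg:haar-return} is an average statement and must be used as one:
\begin{itemize}
\item take one Haar-random orthonormal basis $u_0,\dots,u_m$ of $\K^{m+1}$;
\item observe that every pair $(u_s,u_0)$ and $(u_0,u_s)$ is an invariant orthonormal pair;
\item prove, for \emph{every} realization and after aligning phases by unimodular factors on the left multipliers, the pointwise bounds $\norm{(\alpha_sc_s)_s}_2\le\kappa\norm{\alpha}_F$ and $\norm{(\alpha_sc'_s)_s}_F\le\kappa\norm{\alpha}_2$;
\item only then average, using solidity and Jensen: $\frac13\norm{\alpha}\le\norm{(\alpha_s\E c_s)_s}\le\E\norm{(\alpha_sc_s)_s}$.
\end{itemize}
The continuity \eqref{pg:coefficient-continuity} is what makes these integrands measurable without a measurable selection of maximizers. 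It does not let you replace the expectation by a fixed choice.
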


\begin{proof}
Fix \(d\ge2\). For \(n>d\) and \(u\in\K^d\), extended by zero
outside the first \(d\) coordinates, put
\[
 \begin{gathered}
 \rho_n(u)=\sum_{j=1}^d u_je_{nj}=e_nu^\top,\\
 J_nu=V\rho_n(u),\qquad
 T_nz=((Rz)_{nj})_{j=1}^d\quad(z\in Q_nZ).
 \end{gathered}
\]
The map \(\rho_n:\ell_2^d\to C_F\) is linear and isometric.
The row compression \(x\mapsto e_{nn}xp_d\) is contractive and
has Euclidean coefficient norm. Equations
\eqref{pg:pure-identities} imply
\[
 J_n(\ell_2^d)\subset Q_nZ,\qquad
 T_nJ_n=I,\qquad
 \norm{J_n}\le\norm{V},\qquad \norm{T_n}\le\norm{R}.
\]
For unit vectors \(u,w\), define
\[
 c_n(u,w)=
 \max_{a,b}
 \abs{\langle T_n(aJ_n(u)b),w\rangle},
\]
where \(a_t,b_t\in M_{d_t}(\K)\), \(t\in G_n\), range over
the block multipliers on \(Z_{G_n}\).
Lemma~\ref{pg:haar-return} gives
\begin{equation}\label{pg:row-mean}
 \E c_n(u,w)\ge\frac13
\end{equation}
for an invariant orthonormal pair. In addition \(c_n\) is continuous
and at most \(\kappa\), with the uniform continuity estimate
\eqref{pg:coefficient-continuity}.

Let \(\alpha=(\alpha_1,\ldots,\alpha_m)\), put \(d=m+1\), and
choose distinct \(n_1,\ldots,n_m>d\). For one Haar-distributed
orthonormal basis \(u_0,u_1,\ldots,u_m\) of \(\K^d\), set
\[
 c_s=c_{n_s}(u_s,u_0)\quad(1\le s\le m).
\]
For each \(s\), choose a maximizing pair of block multipliers on
\(Z_{G_{n_s}}\). Multiply the left factors \(a_t\), \(t\in G_{n_s}\),
by a common scalar of modulus one so that
\(\langle T_{n_s}(aJ_{n_s}(u_s)b),u_0\rangle=c_s\ge0\).
Since the sets \(G_{n_1},\ldots,G_{n_m}\) are pairwise disjoint,
the chosen multipliers define a map
\(\mathcal A:Z\to Z\) of the form
\[
 (\mathcal Az)_t=a_tz_tb_t,\qquad
 \sup_t\norm{a_t},\ \sup_t\norm{b_t}\le1.
\]
Set \(a_t=b_t=I_{d_t}\) for
\(t\notin\bigcup_{s=1}^mG_{n_s}\). Thus \(\norm{\mathcal A}\le1\),
and \(\mathcal A(Z_{G_n})\subset Z_{G_n}\) for every \(n\).
The choice of \(\mathcal A\) may depend on the basis.

First consider
\[
 x=\sum_{s=1}^m\alpha_s\rho_{n_s}(u_s).
\]
Its row directions \(u_s\) are orthonormal and its row coordinates
\(n_s\) are distinct. Its nonzero singular values are therefore the
\(\abs{\alpha_s}\), and \(\norm{x}_{C_F}=\norm{\alpha}_F\).
By \eqref{pg:pure-identities},
\[
 Vx\in\bigoplus_{s=1}^mZ_{G_{n_s}},\qquad
 R\mathcal AVx\in\sum_{s=1}^m\Pi_{n_s}C_F.
\]

Set \(q_0=\overline{u_0}u_0^\top\), regarded as an operator on the
source Hilbert space by extension by zero. This is the orthogonal
projection onto the direction \(\overline{u_0}\). Direct multiplication gives
\[
 \rho_n(v)q_0=\langle v,u_0\rangle\rho_n(u_0).
\]
Since \(n_s>d\), multiplication on the right by \(q_0\)
annihilates the column-\(n_s\) part of the source subspace
\(\Pi_{n_s}C_F\), including its diagonal entry. It retains only
the row-\(n_s\) part projected onto the first \(d\) coordinates.
The identities
\(\langle T_{n_s}(aJ_{n_s}(u_s)b),u_0\rangle=c_s\)
give
\begin{equation}\label{pg:common-output}
 (R\mathcal AVx)q_0
 =\sum_{s=1}^m\alpha_sc_s\rho_{n_s}(u_0).
\end{equation}
The right-hand side has rank at most one and norm
\(\norm{(\alpha_sc_s)_s}_2\). Hence, for every chosen orthonormal basis,
\begin{equation}\label{pg:first-pointwise}
 \norm{(\alpha_sc_s)_s}_2\le\kappa\norm{\alpha}_F.
\end{equation}
Each pair \((u_s,u_0)\) has the invariant orthonormal-pair
distribution, even though its associated row map depends on \(s\).
Thus \eqref{pg:row-mean}, solidity, and Jensen's inequality imply
\begin{equation}\label{pg:first-comparison}
 \frac13\norm{\alpha}_2
 \le\norm{(\alpha_s\E c_s)_s}_2
 \le\E\norm{(\alpha_sc_s)_s}_2
 \le\kappa\norm{\alpha}_F.
\end{equation}

For the reverse comparison put \(c'_s=c_{n_s}(u_0,u_s)\).
Choose multipliers on each \(Z_{G_{n_s}}\), with the left factors
multiplied by a common scalar of modulus one, so that
\(\langle T_{n_s}(aJ_{n_s}(u_0)b),u_s\rangle=c'_s\ge0\).
Set \(a_t=b_t=I_{d_t}\) outside \(\bigcup_{s=1}^mG_{n_s}\), and let
\(\mathcal A z=(a_tz_tb_t)_t\). This is a contraction on \(Z\). Now let
\[
 x=\sum_{s=1}^m\alpha_s\rho_{n_s}(u_0),\qquad y=R\mathcal AVx.
\]
This input has rank at most one and norm \(\norm{\alpha}_2\), so
\(\norm{y}_{C_F}\le\kappa\norm{\alpha}_2\).
Define Hilbert-space isometries \(L,W:\K^m\to\ell_2\) by
\[
 Le_s=e_{n_s},\qquad We_s=\overline{u_s},
\]
and define
\[
 \mathcal D(z)=L\,\diag(L^*zW)\,W^*.
\]
Here \(\diag(A)\) for a finite matrix denotes its diagonal part.
The ideal property and the diagonal case of
\eqref{eq:orthogonal-projections} give \(\norm{\mathcal D}\le1\).

For \(j\le d\), an entry at \((n_s,j)\) belongs only to the
source subspace \(\Pi_{n_s}C_F\). The identity
\(\Pi_{n_s}R=RQ_{n_s}\) in \eqref{pg:pure-identities} therefore
excludes contributions from \(Z_{G_{n_r}}\), \(r\ne s\), to the
\(s\)-th diagonal entry of \(L^*yW\).
That entry is
\[
 e_{n_s}^*y\,\overline{u_s}=\alpha_sc'_s.
\]
Hence
\begin{equation}\label{pg:orthogonal-output}
 \mathcal D(y)=
 \sum_{s=1}^m\alpha_sc'_s\rho_{n_s}(u_s).
\end{equation}
The singular values on the right are the \(\abs{\alpha_sc'_s}\).
Therefore
\[
 \norm{(\alpha_sc'_s)_s}_F\le\kappa\norm{\alpha}_2.
\]
Since \(\E c'_s\ge1/3\), Jensen's inequality in the restriction of
the \(F\)-norm to the first \(m\) sequence coordinates yields
\begin{equation}\label{pg:second-comparison}
 \frac13\norm{\alpha}_F
 \le\norm{(\alpha_s\E c'_s)_s}_F
 \le\E\norm{(\alpha_sc'_s)_s}_F
 \le\kappa\norm{\alpha}_2.
\end{equation}

The integrands depend only on the continuous functions $c_n$;
no measurable selection of maximizing multipliers is needed.
Both comparisons use the same maps $V,R$, and $\kappa$
is independent of $m$.

Combining \eqref{pg:first-comparison} and
\eqref{pg:second-comparison} proves \eqref{pg:pure-equivalence}.
The case \(m=1\) is included by taking \(d=2\).
Finally \(F\) and \(\ell_2\) are the completions of \(c_{00}\)
in their respective norms. The identity on \(c_{00}\) extends
boundedly in both directions. Coordinate evaluations are
continuous and agree on \(c_{00}\), so these extensions identify
the sequence sets. Hence $F=\ell_2$.
\end{proof}

\begin{theorem}\label{pg:comp-block}
Let \(E,F\) be separable symmetric Banach sequence spaces.
If \(C_F\) is isomorphic to a
complemented subspace of \(S_E\), then \(F=\ell_2\) canonically,
with equivalent norms. More generally, the same conclusion holds
with \(S_E\) replaced by
\(Z=\overline{\bigoplus_{t\ge1}M_{d_t}(\K)}^{\,C_E}\)
for any sequence of positive integers \((d_t)_{t\ge1}\).
\end{theorem}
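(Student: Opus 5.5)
The plan is to combine Proposition~\ref{pg:reproduction} with Theorem~\ref{pg:pure-hilbert}, working throughout in the general space $Z$; the case of $S_E$ is $d_t=t$.

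\emph{Normalizing the embedding.} Suppose $C_F$ is isomorphic to a complemented subspace of $Z$. Then there are bounded linear maps $T:C_F\to Z$ and $S:Z\to C_F$ with $ST=I$. Indeed, if $P$ is the projection onto $T(C_F)$, take $S=T^{-1}P$. From $ST=I$ we get the lower bound $\norm{Tx}_Z\ge\norm{S}^{-1}\norm{x}_{C_F}$. So $T$ satisfies the two-sided hypothesis of Proposition~\ref{pg:reproduction} with $a=\norm{S}^{-1}$ and $b=\norm{T}$. Fix $\eta$ with $0<\eta<a$; then $\eta\norm{S}<1$ automatically, since $a\norm{S}=1$.

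\emph{Applying the earlier results.} Proposition~\ref{pg:reproduction} then supplies the following objects:
\begin{itemize}
\item integers $0=N_0<N_1<\cdots$;
\item an embedding $V:C_F\to Z$ with $Q_kV=V\Pi_k$, where $Q_k=H_{N_k}-H_{N_{k-1}}$;
\item a bounded $R:Z\to C_F$ with $RV=I$ and $\Pi_kR=RQ_k$ for all $k\ge1$.
\end{itemize}
Now put $G_k=\{t:N_{k-1}<t\le N_k\}$. These are finite, pairwise disjoint sets whose union is $\N$, because $N_k\to\infty$ strictly. The coordinate projection onto $Z_{G_k}$ is exactly $H_{N_k}-H_{N_{k-1}}=Q_k$. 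Hence the identities \eqref{pg:pure-identities} hold verbatim. Theorem~\ref{pg:pure-hilbert} then gives \eqref{pg:pure-equivalence} with $\kappa=\norm{V}\norm{R}<\infty$, and therefore $F=\ell_2$ canonically with equivalent norms.

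\emph{Remaining checks.} Only bookkeeping is left. The projections $H_N$ on $Z$ must satisfy the hypotheses of Lemma~\ref{pg:full-reproduction}, which Proposition~\ref{pg:reproduction} invokes:
\begin{itemize}
\item $H_N$ has finite rank, since it maps into $\bigoplus_{t\le N}M_{d_t}(\K)$;
\item $H_NH_M=H_{\min(N,M)}$ holds trivially;
\item $\norm{H_N}\le1$ follows from \eqref{eq:orthogonal-projections}, taking block-diagonal projections $u_t=v_t$ equal to the projection onto the $t$-th summand for $t\le N$;
\item $H_Ny\to y$ holds because finitely supported block sequences are dense in $Z$, which comes from the separability of $E$ and the definition of $Z$ as a closure.
\end{itemize}
The main point to verify is this last convergence $H_Ny\to y$. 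For $y$ with finitely many nonzero blocks, the convergence is eventually exact. For general $y$, approximate by such elements and use $\norm{H_N}\le1$. Everything else is a direct citation, so I expect no real obstacle beyond confirming that the $Q_k$ produced by the proposition coincide with the block projections required in Theorem~\ref{pg:pure-hilbert}.
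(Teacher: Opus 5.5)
Your proposal is correct and matches the paper's proof essentially step for step: you obtain $T,S$ with $ST=I$, apply Proposition~\ref{pg:reproduction} with $\eta<a$ and $\eta\norm{S}<1$ (your choice $a=\norm{S}^{-1}$ makes the second condition automatic), take $G_k=\{t:N_{k-1}<t\le N_k\}$ as the block groups, and conclude with Theorem~\ref{pg:pure-hilbert}. The one bookkeeping step the paper states that you leave out is rescaling the sequence norms so that $\norm{e_1}_E=\norm{e_1}_F=1$; the cited lemmas use this normalization, but it does not affect the conclusion.
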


\begin{proof}
Choose an embedding \(T:C_F\to Z\) and a bounded left inverse
\(S:Z\to C_F\), so that \(ST=I\). Such a left inverse is obtained
by composing a bounded projection onto \(T(C_F)\) with the inverse
of \(T\) on its range. Renormalize the sequence norms if necessary
so that their first unit vectors have norm one.
Choose \(a>0\) such that
\(\norm{Tx}_Z\ge a\norm{x}_{C_F}\) for every \(x\in C_F\).
Apply Proposition~\ref{pg:reproduction} with
\(0<\eta<a\) and \(\eta\norm{S}<1\). We obtain bounded maps
\(V,R\) satisfying \eqref{pg:pure-identities},
where the $k$th group consists of $M_{d_t}(\K)$ with
$N_{k-1}<t\leq N_k$.
The conclusion follows from Theorem~\ref{pg:pure-hilbert}.
\end{proof}

\begin{remark}\label{pg:comp-block-scope}
The proof of Theorem~\ref{pg:comp-block} uses the bounded left
inverse $R$. The embeddings in Subsection~\ref{sec:ordinary-limits}
need not have a bounded left inverse.
\end{remark}

\section{Uniqueness of the symmetric structure}\label{pg:sec-uniqueness}
\label{pg:sec-components}\label{pg:sec-returns}

Retain the projections $P_n$ and $\Pi_n$ from
\eqref{pg:shells}. Let $\mathcal D_Ex=\sum_i x_{ii}e_{ii}$ for $x\in C_E$,
and let $L_E$ be strict lower triangular truncation on finite matrices,
with $t_E=\norm{L_E}\in[1,\infty]$. Define $\mathcal D_F$, $L_F$ and $t_F$
in the same way. For a separable infinite-dimensional
Hilbert space $H_0$, we consider maps $A:C_F\to C_E(H_0\otimes\ell_2)$
of the form $A(e_{ij})=b_{ij}\otimes e_{ij}$, where $b_{ij}$ acts on $H_0$.

\begin{lemma}\label{pg:triangle-invariance}
An ordinary embedding $C_F\to C_E$ transfers boundedness of strict lower
triangular truncation from $C_E$ to $C_F$. Hence finiteness of $t_E$ is
invariant under isomorphisms between symmetric compact ideals.
\end{lemma}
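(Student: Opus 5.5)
Given an isomorphic embedding $T:C_F\to C_E$, we want to show that if $t_E<\infty$ then $t_F<\infty$. The idea is to reproduce the embedding so that it intertwines the triangular structure, and then transport the truncation back.

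\emph{Step 1: normalize $T$.} Apply Lemma~\ref{pg:full-reproduction} with $Y=C_E$ and $H_N=P_N$. These are finite-rank contractive projections with $H_NH_M=H_{\min(N,M)}$ and $H_Ny\to y$. This gives an embedding $V$ and cutoffs $N_k$ with $Q_kV=V\Pi_k$, where
\[
 Q_k=P_{N_k}-P_{N_{k-1}}.
\]
Thus $V$ maps the $k$th source shell into the $k$th target shell. Strict lower triangular truncation $L_E$ does not commute with these shell projections. However, block lower truncation
\[
 \Lambda_E(y)=\sum_{k>l}(p_{N_k}-p_{N_{k-1}})\,y\,(p_{N_l}-p_{N_{l-1}})
\]
is bounded when $t_E<\infty$, because it is a difference of triangular-type truncations. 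The standard argument writes it as $L_E$ followed by the contractive block-diagonal compression of \eqref{eq:orthogonal-projections}; up to constants this gives $\norm{\Lambda_E}\le 2t_E+1$.

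\emph{Step 2: transfer via a finer reproduction.} Shell compatibility alone does not track rows versus columns. I would therefore refine the construction in Lemma~\ref{pg:full-reproduction} so that the image of each matrix unit $e_{ij}$ is approximately localized in a target row block $\sigma(i)$ and column block $\sigma(j)$, with $\sigma$ increasing. This is achieved by choosing cutoffs separately for left and right multiplication. The selected coordinates $n_i,m_i$ can be arranged this way because the row and column sequences $(e_{rs})_s$ and $(e_{sr})_s$ are weakly null. Their images under the finite-rank maps $p_NT(\cdot)$ and $T(\cdot)p_N$ become small, so a perturbation of norm less than $\eta$ gives
\[
 p'_{\sigma(i)}\,Ve_{ij}\,p'_{\sigma(j)}=Ve_{ij}.
\]
Then $V L_F x=\Lambda V x$ on finite matrices, where $\Lambda$ is the block strict lower truncation relative to the blocks $p'_k$. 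Consequently
\[
 \norm{L_Fx}_{C_F}\le (a-\eta)^{-1}\norm{\Lambda}\,\norm{V}\,\norm{x}_{C_F}.
\]
Density of finite matrices then gives $t_F<\infty$. For the second sentence of the lemma, apply this argument to the isomorphism and to its inverse.

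\emph{Main obstacle.} The hard part is the two-sided localization in Step 2. The diagonal units $e_{ss}$ are not weakly null in general (Remark~\ref{pg:diagonal-endpoint}), which is why differences $e_{n_in_i}-e_{m_im_i}$ are used. I expect that choosing $n_k,m_k$ from a Cauchy subsequence of the images under both the left and right truncations $p_NT(\cdot)$ and $T(\cdot)p_N$ still works, since these are finite-rank maps. The perturbation errors are summable, so $\Lambda V-VL_F$ has small norm on finite matrices. Since $\norm{\Lambda}$ is bounded uniformly in the block choice by $2t_E+1$, the estimate closes.
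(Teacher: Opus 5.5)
Your Step~2 does not work, and with it the identity $VL_F=\Lambda V$ fails. The maps $y\mapsto p_Ny$ and $y\mapsto yp_N$ on $C_E$ are not finite rank: their ranges contain whole rows, respectively columns, each isometric to $\ell_2$. Only $P_Ny=p_Nyp_N$ has finite rank, which is why Lemma~\ref{pg:full-reproduction} gives only the shell identity $Q_kV=V\Pi_k$. Weak nullity of $(e_{sr})_s$ yields $p_NT(e_{sr})p_N\to0$, but not $p_NT(e_{sr})\to0$, and these maps admit no Cauchy-subsequence selection.

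The two-sided localization is in fact false in general. Take $E=F$ and $T(x)=x^{\mathsf t}$, so that $a=1$ and $Tz_{ij}=z_{ji}$. Then $\norm{Ve_{ij}-Tz_{ij}}_{C_E}<\eta<1$ forces the $(n_j,n_i)$ entry of $Ve_{ij}$ to be nonzero. So $Ve_{ij}\in p'_{\sigma(i)}C_Ep'_{\sigma(j)}$ would place row $n_j$ in block $\sigma(i)$ for every $i>j$, which is impossible for disjoint blocks. Embeddings such as $x\mapsto U_0xU_0^*+U_1x^{\mathsf t}U_1^*$ mix both orientations. More generally, the image of $e_{ij}$ need not be localized in the second index at all; this is why Lemma~\ref{pg:coarse-components} has to allow components like $C_{10},C_{01}$. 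Consequently no block truncation on $C_E$ intertwines $L_F$ in general.

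The missing idea is to transfer unconditionality of the shells rather than the truncation itself; your Step~1 already suffices for that. Put $p'_k=p_{N_k}-p_{N_{k-1}}$, $S_\tau=\sum_k\tau_kp'_k$, and $\mathcal D_by=\sum_kp'_kyp'_k$. Then $D_\tau=\sum_k\tau_kQ_k$ satisfies $D_\tau y=S_\tau(\Lambda_Ey+\mathcal D_by)+\bigl((I-\Lambda_E-\mathcal D_b)y\bigr)S_\tau$. Since $\Lambda_E=(I-\mathcal D_b)L_E$, this gives $\norm{D_\tau}\le4t_E+3$ for all signs.

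On finite matrices $VM_\tau=D_\tau V$ with $M_\tau=\sum_k\tau_k\Pi_k$. The lower bound for $V$ then gives $\sup_\tau\norm{M_\tau}<\infty$ on $C_F$, exactly as in \eqref{pg:shell-sign-bound}; that is, $(\Pi_nC_F)$ is an unconditional FDD.

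To return to $L_F$, interlace. With $u_e,u_o$ as in Lemma~\ref{pg:interlacing}, the map $Wx=u_exu_o^*+u_oxu_e^*$ has norm at most $2$. The entry $(2i,2j-1)$ lies in an even shell iff $i\ge j$. Taking $\tau_n=(-1)^n$ therefore gives $u_e^*(M_\tau Wx)u_o=(2(L_F+\mathcal D_F)-I)x$, so $L_F$ is bounded.

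This is the content of the implications (i)$\Rightarrow$(iii) and (ii)$\Rightarrow$(i) of Arazy's Lemma~4.5, which the paper simply cites. With this replacement your outline becomes a self-contained proof of the lemma.
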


\begin{proof}
The upper triangular projection in \cite[Lemma 4.5]{Arazy78} is $I-L_E$.
If $t_E<\infty$, the implication (i)$\Rightarrow$(iii) of that lemma
gives an unconditional finite-dimensional decomposition of $C_E$.
The embedding $C_F\to C_E$ therefore satisfies condition (ii) for $C_F$,
and (ii)$\Rightarrow$(i) gives boundedness of $I-L_F$, hence $t_F<\infty$.

Finally, apply the implication to an isomorphism and its inverse.
\end{proof}

\begin{lemma}\label{pg:coarse-components}
Suppose $V:C_F\to C_E$ and $R:C_E\to C_F$ satisfy $RV=I$ and
$(P_{N_k}-P_{N_{k-1}})V=V\Pi_k$, as in Lemma~\ref{pg:full-reproduction}
with $Y=C_E$ and $H_N=P_N$. Write the
Hilbert space $\ell_2$ on which $C_E$ acts as $\bigoplus_i\mathcal H_i$,
where $\mathcal H_i=\operatorname{span}\{e_n:N_{i-1}<n\leq N_i\}$, and let
$q_i$ project onto $\mathcal H_i$. Thus
\begin{equation}\label{pg:max-support}
 q_pV(e_{ij})q_q=0\quad\text{unless}\quad
 \max(p,q)=\max(i,j).
\end{equation}
There are bounded maps $C_{ab}:C_F\to C_E$, $a,b\in\{1,2,0\}$,
vanishing on $e_{ii}\in C_F$, such that
\[
 \begin{gathered}
 \sum_{a,b}C_{ab}=V(I-\mathcal D_F),\qquad
 \norm{C_{ab}}\leq 2c_ac_b\norm{V}\leq18\norm{V},\\
 c_1=c_2=1,\qquad c_0=3.
 \end{gathered}
\]
For $i>j$ only $C_{11},C_{12},C_{21},C_{10},C_{01}$ can be nonzero, and
\begin{equation}\label{pg:return-sum}
 1=\sum_{ab\in\{11,12,21,10,01\}}
       (RC_{ab}(e_{ij}))_{ij}.
\end{equation}
Here $C_{12}(e_{ij})\in q_iC_Eq_j$ and
$C_{21}(e_{ij})\in q_jC_Eq_i$ for every $i\ne j$.
The transpose $C_{21}(e_{ij})^{\mathsf t}$ also lies in $q_iC_Eq_j$.
Thus the ranges of $C_{12}(e_{ij})$ and $C_{21}(e_{ij})^{\mathsf t}$
lie in $\mathcal H_i$, and the ranges of their adjoints lie in
$\mathcal H_j$. On $i>j$,
$C_{11}(e_{ij})\in q_iC_Eq_i$,
$C_{10}(e_{ij})\in q_iC_E(I-q_i-q_j)$, and
$C_{01}(e_{ij})\in(I-q_i-q_j)C_Eq_i$. The latter three components vanish on
all $e_{ij}\in C_F$ with $i\leq j$. For $i>j$,
$q_iC_{10}(e_{ij})q_q=0$ when $q\geq i$, and
$q_pC_{01}(e_{ij})q_i=0$ when $p\geq i$.

For one of these five components, $\abs{(RC_{ab}(e_{ij}))_{ij}}\geq1/5$
on all pairs $i>j$ with $i,j$ in one infinite subset of $\mathbb N$.
\end{lemma}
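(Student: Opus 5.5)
We decompose $V(e_{ij})$ for $i\neq j$ according to the block partition $\ell_2=\bigoplus_i\mathcal H_i$, setting $\mathcal H_0$-type remainder $I-q_i-q_j$ as the index $0$. The natural definition is, for $i\ne j$ and with the labels $1\mapsto q_{\max}$-related and $2$ the other index:
\[
 C_{11}(e_{ij})=q_iV(e_{ij})q_i,\quad C_{12}(e_{ij})=q_iV(e_{ij})q_j,\quad C_{21}(e_{ij})=q_jV(e_{ij})q_i,
\]
\[
 C_{10}(e_{ij})=q_iV(e_{ij})(I-q_i-q_j),\quad C_{01}(e_{ij})=(I-q_i-q_j)V(e_{ij})q_i,
\]
and $C_{22},C_{20},C_{02},C_{00}$ analogously with $q_j$ and $I-q_i-q_j$. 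Then $\sum_{a,b}C_{ab}(e_{ij})=V(e_{ij})$ because $q_i+q_j+(I-q_i-q_j)=I$. By \eqref{pg:max-support}, for $i>j$ every block $q_pV(e_{ij})q_q$ with $\max(p,q)\neq i$ vanishes, so $C_{22},C_{20},C_{02},C_{00}$ vanish on $e_{ij}$ with $i>j$, and the support claims ($q_iC_{10}(e_{ij})q_q=0$ for $q\ge i$, etc.) follow the same way. To make $C_{11},C_{10},C_{01}$ vanish on $i\le j$ as required, I would instead define them only on the strictly lower part. Concretely, I define each $C_{ab}$ as a linear map on finite matrices via $C_{ab}(x)=\sum_{i\ne j}x_{ij}C_{ab}(e_{ij})$, organized by shell.

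The key step is boundedness with $\norm{C_{ab}}\le 2c_ac_b\norm V$. Since $V$ maps the $k$th shell into the $k$th target shell, for $e_{ij}$ in shell $k=\max(i,j)$ the projections $q_i,q_j$ are expressed by the index $k$ and the other index. I would write $C_{ab}(x)=\sum_k u^{(a)}_k\,V(\Pi_k(I-\mathcal D_F)x')\,v^{(b)}_k$ with suitable families, and apply \eqref{eq:orthogonal-projections}. For example $C_{12}+C_{11}$-type pieces become $\sum_k q_k V(\Pi_k \cdot)(\ldots)$. The index $0$ projection $I-q_i-q_j$ is a sum of at most three pairwise differences of projections, which yields the factor $c_0=3$. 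The factor $2$ comes from $\norm{I-\mathcal D_F}\le2$, since $\mathcal D_F$ is contractive. The cost is that the second index $j$ varies inside the shell, so $q_j$ is not a single projection per shell. I would handle this by splitting $x$ into row part and column part of each shell and treating $\sum_j q_j(\cdot)$ as a family of pairwise orthogonal projections indexed by $j$, tensored with the shell decomposition. This keeps the construction within the scope of \eqref{eq:orthogonal-projections}. I expect this bookkeeping, namely writing each $C_{ab}$ as a single double sum $\sum u\,(\cdot)\,v$ with orthogonal families on both sides, to be the main obstacle. The transpose statement for $C_{21}$ is immediate from its support.

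For \eqref{pg:return-sum}, take $i>j$. Then $(RV(e_{ij}))_{ij}=1$, and the four vanishing components drop out, which gives the identity. For the final claim, at least one of the five terms has modulus $\ge1/5$ for each pair $i>j$. This defines a colouring of pairs $\{i>j\}$ by five colours. By Ramsey's theorem there is an infinite subset of $\mathbb N$ on which all pairs share one colour, and that colour is the desired component.
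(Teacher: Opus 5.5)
Your definitions of the nine components agree with the paper's. Your derivations of the vanishing and support claims from \eqref{pg:max-support}, the identity \eqref{pg:return-sum} from $RV=I$, and the five-colour Ramsey step are all correct.

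\textbf{The gap.} The gap is the step you yourself flag as the main obstacle, the bound $\norm{C_{ab}}\le2c_ac_b\norm{V}$, and the route you sketch cannot deliver it. Take a target compression $y\mapsto\sum_\alpha u_\alpha yv_\alpha$ whose $u_\alpha,v_\alpha$ are sums of the $q_p$. It multiplies each block $q_pV(y)q_q$ by a fixed $0$ or $1$, independent of $y$. Now take $i>j$ and $l\ne j$, $l<i$. The units $e_{ij}$ and $e_{il}$ lie in the same source shell, and $q_iV(e_{il})q_j$ may be nonzero, since $\max(i,j)=\max(i,l)$. Yet $C_{12}$ must keep $q_iV(e_{ij})q_j$ while discarding $q_iV(e_{il})q_j$. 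So no expression $\sum_\alpha u_\alpha V(\Phi x)v_\alpha$ with a single source input $\Phi x$ produces $C_{12}$. You would need inputs that depend on the target block, such as $\sum_j V(xe_{jj})q_j$. A sum of $V$ applied to infinitely many different operators is not covered by \eqref{eq:orthogonal-projections}, which concerns $\sum_iu_ixv_i$ for one $x$.

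Two further problems with the plan:
\begin{itemize}
\item Splitting $x$ globally into the row and column parts of all shells is the map $x\mapsto L_Fx$ (plus its upper analogue). This is unbounded when $t_F=\infty$, which is precisely the case in which the lemma is used in Theorem~\ref{pg:unbounded-uniqueness}.
\item You need not ``define $C_{11},C_{10},C_{01}$ only on the strictly lower part''. The natural formulas already vanish for $i<j$ by \eqref{pg:max-support}; for instance $q_iV(e_{ij})q_i=0$ because $\max(i,i)\ne j$. Implementing the restriction through $L_F$ would reintroduce the unbounded truncation.
\end{itemize}

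\textbf{The missing idea.} What is needed is a sign average that couples source and target. Let $A:C_F\to C_E$ be bounded, and let $x$ have row support in $\{1,\dots,m\}$. Put $d_\epsilon e_i=\epsilon_ie_i$ for $i\le m$ on the source, and $D_\epsilon=\sum_{k\le m}\epsilon_kq_k$ plus the identity on the remaining $\mathcal H_k$ on the target. Both are unitaries, and because $\E\,\epsilon_i\epsilon_k=\delta_{ik}$, the average $\E_\epsilon D_\epsilon A(d_\epsilon x)$ equals $q_iA(e_{ij})$ at $x=e_{ij}$. Density then shows that $A\mapsto\bigl(e_{ij}\mapsto q_iA(e_{ij})\bigr)$ is a contraction on bounded maps.

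Using $xd_\epsilon$ instead of $d_\epsilon x$, and/or right multiplication by $D_\epsilon$, gives the analogous contractions $q_jA(e_{ij})$, $A(e_{ij})q_i$ and $A(e_{ij})q_j$. These are commuting projections. On maps vanishing on diagonal units, let $\mathcal L_0$ be the identity minus the two left operations, with norm at most $3$, and define its right analogue likewise. Then $C_{ab}=\mathcal L_a\mathcal M_b\bigl(V(I-\mathcal D_F)\bigr)$ satisfies $\norm{C_{ab}}\le2c_ac_b\norm{V}$ without any triangular truncation. The shell structure of $V$ is then used only for the vanishing and support statements.
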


\begin{proof}
On the space of bounded maps $A:C_F\to C_E$, define
\[
 \begin{array}{ll}
 \mathcal P_{LL}(A)e_{ij}=q_iA(e_{ij}),&
 \mathcal P_{LR}(A)e_{ij}=q_jA(e_{ij}),\\
 \mathcal P_{RL}(A)e_{ij}=A(e_{ij})q_i,&
 \mathcal P_{RR}(A)e_{ij}=A(e_{ij})q_j.
 \end{array}
\]
Each has operator norm at most one. For example, if the source row support
of $x$ is in $\{1,\ldots,m\}$, average
$D_\epsilon A(d_\epsilon x)$ over independent signs on these indices,
where $D_\epsilon$ acts as $\epsilon_i I_{\mathcal H_i}$ on
$\mathcal H_i$ for $i\leq m$, $d_\epsilon e_i=\epsilon_i e_i$ for
$i\leq m$, and both operators are the identity on the respective
orthogonal complements. For $e_{ij}$ the average is $q_iA(e_{ij})$;
all terms $q_kA(e_{ij})$ with $k\ne i$, including $k>m$, cancel.
Density gives the first map and its bound. The other three follow by
using source or target right multiplication in the same average.

These four operations are commuting projections, as is checked on matrix
units by the commuting left and right projections $q_i,q_j$. On maps
vanishing on diagonal source units, put
\[
 \mathcal L_1=\mathcal P_{LL},\quad
 \mathcal L_2=\mathcal P_{LR},\quad
 \mathcal L_0=I-\mathcal P_{LL}-\mathcal P_{LR},
\]
and define $\mathcal M_1,\mathcal M_2,\mathcal M_0$ by the analogous
target-right operations. Each triple consists of disjoint projections;
its norms are bounded by $1,1,3$. Define
$C_{ab}=\mathcal L_a\mathcal M_b(V(I-\mathcal D_F))$. This proves the
sum identity and the norm bounds. For $i\ne j$, the row index retained
by $a=1,2,0$ is
respectively $i,j$, or neither, and the analogous statement holds for the
column index and $b$. The maximum-support condition proves all the stated
vanishings and support descriptions, including $C_{00}=0$.

Since $RV=I$, \eqref{pg:return-sum} follows. At least one of its five
summands has modulus at least $1/5$. Color each unordered pair by one such
component on its lower ordered pair. The elementary infinite Ramsey
theorem for pairs gives a homogeneous infinite set: choose successively
a vertex and an infinite set of remaining vertices on which
its incident-edge color is constant. One of the recorded colors occurs
infinitely often; the vertices with that color form the required set.
\end{proof}

\begin{lemma}\label{pg:matched-return}
Let $A:C_F\to C_E(H)$ and $R:C_E(H)\to C_F$ be bounded. Suppose
$(u_i)$ and $(v_j)$ are families of orthogonal projections on $H$, with
mutual orthogonality within each family, and
\[
 A(e_{ij})=u_iA(e_{ij})v_j.
\]
There is a bounded map $B:C_E(H)\to C_F$ with
$\norm{B}\leq\norm{R}$, such that
\[
 BA=M_\lambda,\qquad
 \lambda_{ij}=(RA(e_{ij}))_{ij},
 \qquad M_\lambda(e_{ij})=\lambda_{ij}e_{ij}.
\]
On $uC_E(H)v$, where $u=\sum_i u_i$ and $v=\sum_jv_j$
are strong sums, $B$ satisfies
\[
 B(u_i y)=e_{ii}B(y),\qquad B(yv_j)=B(y)e_{jj}.
\]
In particular, if $RA=I$, then $B$ is a left inverse of $A$ satisfying
these support identities.
\end{lemma}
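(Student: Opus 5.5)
Define $B$ by compressing $R$ through the same support projections and then averaging over signs. The model is the proof of Proposition~\ref{pg:reproduction}, where the return map $R_1$ was averaged over sign multipliers $M_\tau$ and $D_\tau$.

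For finite sets of indices and independent signs $\epsilon=(\epsilon_i)$ and $\delta=(\delta_j)$, introduce the unitaries
\[
 U_\epsilon=\sum_i\epsilon_iu_i+(I-u),\qquad W_\delta=\sum_j\delta_jv_j+(I-v)
\]
on $H$. Introduce also the diagonal sign matrices $d_\epsilon e_i=\epsilon_ie_i$ and $d_\delta e_j=\delta_je_j$ on $\ell_2$, equal to the identity off the finite index sets. Put
\[
 B_{\epsilon,\delta}(y)=d_\epsilon\,R\bigl(U_\epsilon\,u y v\,W_\delta\bigr)\,d_\delta ,
\]
and let $B$ be the expectation over $\epsilon,\delta$. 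Each $B_{\epsilon,\delta}$ has norm at most $\norm{R}$, because $y\mapsto uyv$ is contractive by \eqref{eq:orthogonal-projections}, multiplication by the unitaries $U_\epsilon,W_\delta,d_\epsilon,d_\delta$ is isometric in both ideals, and $R$ is bounded. Hence $\norm{B}\le\norm{R}$. The finite index sets are then let increase. To make this limit work, I would define $B$ first on $y$ with $y=\sum_{i,j\le m}u_iyv_j$. For such $y$ the average over signs on $\{1,\dots,m\}$ is independent of the choice of any larger finite set. Such $y$ are dense in $uC_E(H)v$ because $\sum_{i\le m}u_i\to u$ strongly and ideal-norm convergence holds for strongly convergent projections. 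Set $B(y)=B(uyv)$ in general.

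The key computation is the following. For $y=u_iyv_j$ we have $U_\epsilon yW_\delta=\epsilon_i\delta_j y$. So
\[
 B(y)=\E\bigl[\epsilon_i\delta_j\,d_\epsilon R(y)d_\delta\bigr],
\]
and its $(k,l)$ entry is $\E[\epsilon_i\delta_j\epsilon_k\delta_l]\,R(y)_{kl}$, which equals $\delta_{ik}\delta_{jl}R(y)_{ij}$. Hence
\[
 B(u_iyv_j)=R(u_iyv_j)_{ij}\,e_{ij}.
\]
With $y=A(e_{ij})$ this gives $BA(e_{ij})=\lambda_{ij}e_{ij}$, and $BA=M_\lambda$ by linearity and density of finite matrices. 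The support identities follow from the same formula. For $y=\sum_{k,l}u_kyv_l$ one gets $B(u_iy)=\sum_l R(u_iyv_l)_{il}e_{il}=e_{ii}B(y)$, and similarly $B(yv_j)=B(y)e_{jj}$ on the dense set, then by continuity. If $RA=I$ then $\lambda_{ij}=1$, so $BA=I$.

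The main obstacle is the limiting and independence bookkeeping. One has to check that the sign average is consistent as the finite index sets grow, and that the diagonal sign matrices on $\ell_2$ and the unitaries on $H$ are indexed compatibly. The statement implicitly assumes the families $(u_i)$ and $(v_j)$ are indexed by $\N$ like the source coordinates, so the same index set serves both. Once the formula $B(u_iyv_j)=R(u_iyv_j)_{ij}e_{ij}$ is established, extending by continuity is routine, using the uniform bound $\norm{R}$.
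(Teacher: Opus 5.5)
Your proposal is correct and is essentially the paper's proof: you average $d_\epsilon R(U_\epsilon yW_\delta)d_\delta$ over independent row and column signs on the finite corners $(\sum_{i\le m}u_i)C_E(H)(\sum_{j\le m}v_j)$, get $\sum_{i,j\le m}(R(u_iyv_j))_{ij}e_{ij}$ with norm at most $\norm{R}$, check consistency on nested corners, extend by density, and precompose with $y\mapsto uyv$. The only blemish is notational: you use $\delta$ both for the column signs and for the Kronecker delta.
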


\begin{proof}
For $y\in(\sum_{i\leq m}u_i)C_E(H)(\sum_{j\leq m}v_j)$, independent
row and column signs give
\[
 By=\E_{\epsilon,\tau}
       d_\epsilon R(U_\epsilon yV_\tau)d_\tau
   =\sum_{i,j\leq m}(R(u_i yv_j))_{ij}e_{ij}.
\]
Here
$U_\epsilon=I_H+\sum_{i\leq m}(\epsilon_i-1)u_i$ and
$V_\tau=I_H+\sum_{j\leq m}(\tau_j-1)v_j$, while
$d_\epsilon e_i=\epsilon_i e_i$ and $d_\tau e_i=\tau_i e_i$ for
$i\leq m$, with both diagonal operators equal to the identity on
$\operatorname{span}\{e_i:i>m\}$.
Expansion cancels every output coefficient outside
$\{1,\ldots,m\}\times\{1,\ldots,m\}$,
and the average has norm at most $\norm{R}$. The formulas agree on nested
corners. These corners are dense in $uC_E(H)v$: their support projections
converge strongly, hence in ideal norm on each finite-rank operator, and
finite-rank approximation gives convergence on the ideal. Thus $B$
extends to $uC_E(H)v$; precomposition with $y\mapsto uyv$ defines $B$ on
$C_E(H)$. The coordinate identities and $BA=M_\lambda$ follow
first on finite corners and then by continuity.

We will also use the row version of this construction. Averaging only
the row signs gives
\[
 B_{\mathrm{row}}y=\sum_i e_{ii}R(u_i y),
 \qquad \norm{B_{\mathrm{row}}}\leq\norm{R},
 \qquad B_{\mathrm{row}}(u_i y)=e_{ii}B_{\mathrm{row}}y.
\]
The same density argument defines the sum on $uC_E(H)$ and then on
$C_E(H)$ by precomposition with $y\mapsto uy$. For every $j$,
$(B_{\mathrm{row}}y)_{ij}=(R(u_i y))_{ij}$. The column version follows by using right signs.
\end{proof}

\begin{lemma}\label{pg:schur-stabilization}
Let $M_\lambda$ be a bounded Schur multiplier on $C_F$, and suppose that
for all $i>j$ with $i,j$ in an infinite subset $I\subset\mathbb N$ one has
$\abs{\lambda_{ij}}\geq\delta>0$.

If $\lambda_{ij}=0$ for $i\leq j$, then $t_F<\infty$.
If instead $t_F=\infty$, then for every $0<\varepsilon<\delta$ there are
disjoint infinite row and column sets, an isometric rectangular insertion
$J:C_F\to C_F$, its contractive extraction $C$, and $\mu\in\K$ with
$\abs{\mu}\geq\delta$ such that
\[
 CM_\lambda J=\mu I+E_0,\qquad \norm{E_0}\leq\varepsilon.
\]
This operator is invertible, with inverse norm at most
$(\delta-\varepsilon)^{-1}$.
\end{lemma}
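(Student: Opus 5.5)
The plan is to stabilize the entries of $\lambda$ along a sparse subsequence of $I$, so that after a coordinate insertion $M_\lambda$ differs by a Schur multiplier of small norm from a combination of triangular truncations and diagonal scalings. The basic tool is the following summability estimate. If a Schur multiplier has entries $\theta_{kl}$ with $\sum_{k,l}\abs{\theta_{kl}}\le\varepsilon$, then its norm on $C_F$ is at most $\varepsilon$, since $\abs{x_{kl}}\le\norm{x}_{C_F}$ and $\norm{e_{kl}}_{C_F}=1$. A related bound is also needed. If $d=(d_l)$ satisfies $\sum_l\abs{d_l}\le\varepsilon$, then $x\mapsto L'(x\,\diag(d))$ has norm at most $\varepsilon$ for every triangular truncation $L'$. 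Indeed, the $l$th column of the output is a compression of the $l$th column of $x$, so its norm is at most $\abs{d_l}\norm{x}_{C_F}$. The same holds with rows and left multiplication.

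First step, for the case $\lambda_{ij}=0$ when $i\le j$. The entries satisfy $\abs{\lambda_{ij}}\le\norm{M_\lambda}$. By a diagonal compactness argument choose an infinite $I_1\subset I$ along which $\mu_j=\lim_{i\in I_1,\,i\to\infty}\lambda_{ij}$ exists for every $j\in I_1$. Then $\abs{\mu_j}\ge\delta$. Next choose $i_1<i_2<\cdots$ in $I_1$ inductively so that $\abs{\lambda_{i_ki_l}-\mu_{i_l}}\le\varepsilon2^{-k}/k$ for all $l<k$; this is possible because only finitely many $l$ are involved at each stage. Let $J_0$ be the isometric coordinate insertion $e_{kl}\mapsto e_{i_ki_l}$, which is implemented by the isometry $e_k\mapsto e_{i_k}$, and let $C_0$ be its contractive extraction. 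Then
\[
 C_0M_\lambda J_0x=L_F(x\,\diag(\mu_{i_l}))+E(x),\qquad\norm{E}\le\varepsilon .
\]
Substituting $x\diag(\mu_{i_l})^{-1}$ and using $\abs{\mu_{i_l}}^{-1}\le\delta^{-1}$ gives $\norm{L_F}\le\delta^{-1}(\norm{M_\lambda}+\varepsilon)<\infty$. Here $L_F$ acts on finite matrices in $C_F$, relabelled by $k$, and strict lower truncation commutes with this order-preserving relabelling. Hence $t_F<\infty$.

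Second step, for the case $t_F=\infty$. Interleave rows and columns. Choose $c_1<r_1<c_2<r_2<\cdots$ in $I$, and use the insertion $J(e_{ij})=e_{r_ic_j}$ with its extraction $C$. Then $r_i>c_j$ exactly when $i\ge j$, so $\abs{\lambda_{r_ic_j}}\ge\delta$ on that region. By compactness and a diagonal argument, first arrange column limits $\mu_c=\lim_r\lambda_{rc}$ over rows $r>c$. Also arrange row limits $\nu_r=\lim_c\lambda_{rc}$ over columns $c>r$, together with $\mu_c\to\mu$ and $\nu_r\to\nu$; then $\abs{\mu}\ge\delta$. Thin out so that $\sum_j\abs{\mu_{c_j}-\mu}$ and $\sum_i\abs{\nu_{r_i}-\nu}$ are at most $\varepsilon/4$. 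Choose each new index so that the pairwise deviations from $\mu_{c_j}$ below the interleaved diagonal, and from $\nu_{r_i}$ above it, are summably small, with total at most $\varepsilon/2$. The summability estimates then give
\[
 CM_\lambda J=\mu L'_F+\nu U_F+E_0,\qquad\norm{E_0}\le\varepsilon,
\]
where $L'_F$ is lower truncation including the diagonal and $U_F$ is strict upper truncation. Since $U_F=I-L'_F$, this reads $(\mu-\nu)L'_F+\nu I+E_0$. If $\mu\ne\nu$, then $L'_F$, and hence $L_F=L'_F-\mathcal D_F$, would be bounded, contradicting $t_F=\infty$. So $\mu=\nu$, and $CM_\lambda J=\mu I+E_0$ with $\abs{\mu}\ge\delta$. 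A Neumann series gives invertibility with inverse norm at most $(\abs{\mu}-\varepsilon)^{-1}\le(\delta-\varepsilon)^{-1}$.

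The main obstacle is the inductive bookkeeping in the second step. Rows and columns are chosen alternately, and each new row must be close to all earlier column limits. Each new column must be close to all earlier row limits, but the lower-region limits $\mu_c$ have to be approached as $r\to\infty$ for columns that are chosen later. I would handle this by performing the diagonal compactness extraction once at the start, on the whole of $I$, for both families of limits. This fixes a single infinite set on which every $\mu_c$ and $\nu_r$ exists and both sequences converge. Only after that would I run the alternating greedy selection.
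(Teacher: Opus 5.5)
Your proof is correct. It has the same architecture as the paper's argument: stabilize the coefficients along a sparse subsequence, write the compressed multiplier as $\mu$ times a lower truncation plus $\nu$ times an upper truncation plus a small error, use $t_F=\infty$ to force $\mu=\nu$, and finish with a Neumann series. The stabilization mechanism is different, though.

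\textbf{The paper's route.} The paper relies on Ramsey's theorem for pairs.
\begin{itemize}
\item In the first assertion, for each $N$ it finds $N$ indices whose lower values all lie within $\delta/(4N^2)$ of a single value $\mu$. It then bounds $L_F$ on $M_N$ uniformly in $N$.
\item In the second assertion, nested Ramsey selections on a principal subsequence put every lower and upper value within $2\eta_i$ of a common centre $(\mu,\nu)$, where $i$ is the smaller index. The identity $\mu=\nu$ is read off from this principal compression with its diagonal removed. Only afterwards does the paper pass to the rectangular insertion $e_{ij}\mapsto e_{2i,2j-1}$.
\end{itemize}

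\textbf{Your route.} You use only sequential compactness.
\begin{itemize}
\item One diagonal argument extracts the column limits $\mu_c$ and, in the second part, the row limits $\nu_r$. A greedy selection follows, in which each row is chosen after the columns it must match below the diagonal, and each column after the rows it must match above it. This ordering is exactly why iterated limits suffice here, with no genuine double limit needed.
\item The remaining column or row dependence is absorbed in two ways. In the first part you right-multiply by $\diag(\mu_{i_l})^{-1}$, which is legitimate because $L_F$ commutes with right diagonal multiplication and $\abs{\mu_{i_l}}\ge\delta$. In the second part your summable row/column multiplier bound handles it.
\item You derive $\mu=\nu$ directly on the interleaved rectangular compression, using the lower truncation with the diagonal included.
\end{itemize}

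\textbf{Comparison.} Your route is more elementary. It needs no common value in the first assertion, and it merges the paper's two stages in the second. The paper's Ramsey device gives a uniform two-index stabilization on a principal set. That is the same tool the paper reuses in Lemma~\ref{pg:coarse-components} and Proposition~\ref{pg:lower-matched}.
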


\begin{proof}
Put $K=\norm{M_\lambda}$; testing units gives $\abs{\lambda_{ij}}\leq K$.
For the first assertion fix $N\geq2$. Color lower values on $I$ by a finite
partition into sets of diameter $\delta/(4N^2)$, and use the pair Ramsey
argument to find $N$ increasing indices all of whose lower values lie in
one set. Choose one of those values $\mu$, so $\abs{\mu}\geq\delta$. On this
corner the multiplier differs from $\mu L_F$ by an operator of norm at most
$\delta/4$: sum the at most $N^2$ errors, using
$\abs{x_{ij}}\leq\norm{x}_{C_F}$ and $\norm{e_{ij}}_{C_F}=1$. The compression to these $N$ indices is isometric to $C_F(\K^N)$. Therefore
\[
 \norm{L_F|_{M_N}}\leq K/\delta+1/4
\]
uniformly in $N$. Density proves boundedness of $L_F$.

For the second assertion choose $\eta_l\downarrow0$ with
$4\sum_l\eta_l\leq\varepsilon$. Successive finite Ramsey selections for
the bounded pairs $(\lambda_{qp},\lambda_{pq})$, $p<q$, give nested
infinite sets $I_l$ and increasing $n_l\in I_l$ such that all later $n_k$
belong to $I_l$, and all values on pairs from $I_l$ lie within $\eta_l$
of a center $(\mu_l,\nu_l)$. Centers at stages $k\geq l$ differ by at most
$\eta_l+\eta_k$ in each coordinate, since $I_k\subseteq I_l$ for $k\geq l$. They converge to $(\mu,\nu)$, and, for $i<j$,
\[
 \abs{\lambda_{n_jn_i}-\mu}\leq2\eta_i,
 \qquad \abs{\lambda_{n_in_j}-\nu}\leq2\eta_i,
 \qquad \abs{\mu}\geq\delta.
\]
Compress $M_\lambda$ to the matrix indices $(n_l)$ and relabel
$n_l$ as $l$. After removing the diagonal multiplier, whose norm is
at most $K$, the off-diagonal multiplier is
\[
 \mu L_F+\nu(I-\mathcal D_F-L_F)+E.
\]
The error has norm at most $4\sum_i\eta_i$. Indeed, for each smaller
index $i$, its upper errors form a single row multiplier
$x\mapsto e_{ii}x d_i$, with $\norm{d_i}\leq2\eta_i$, and its
lower errors form a column multiplier $x\mapsto d'_i x e_{ii}$ with the
same bound. This series converges absolutely in operator norm and
accounts for every off-diagonal error. If $\mu\ne\nu$, the displayed
identity would make $(\mu-\nu)L_F$ bounded. Thus $\mu=\nu$.

In the ideal compressed to $(n_l)$ and reindexed by $l$,
use the insertion $e_{ij}\mapsto e_{2i,2j-1}$ and its reverse
rectangular extraction. None of these entries is diagonal.
Combining these maps with insertion and extraction along $(n_l)$ gives the
claimed $J,C$ and $CM_\lambda J=\mu I+E_0$, with
$\norm{E_0}\leq\varepsilon$. Its norm-convergent Neumann series proves the
inverse bound. The inverse commutes with every source row and column
projection because the multiplier itself does.
\end{proof}

\begin{lemma}\label{pg:matched-component-retraction}
In Lemma~\ref{pg:matched-return}, suppose $t_F=\infty$ and
$\abs{(RA(e_{ij}))_{ij}}\geq\delta>0$ on all lower pairs of an infinite set.
Then one infinite rectangular source restriction $A_0=AJ$ has a bounded
left inverse $R_0$ satisfying
\[
 \norm{A_0}\leq\norm{A},\qquad \norm{R_0}\leq2\norm{R}/\delta.
\]
Writing $J(e_{ij})=e_{r_i c_j}$, one has
$A_0(e_{ij})=u_{r_i}A_0(e_{ij})v_{c_j}$ and, for every $y\in C_E(H)$,
\[
 R_0(u_{r_i}y)=e_{ii}R_0(y),\qquad
 R_0(yv_{c_j})=R_0(y)e_{jj}.
\]
\end{lemma}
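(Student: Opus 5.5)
We combine Lemma~\ref{pg:matched-return} with Lemma~\ref{pg:schur-stabilization}. First, apply Lemma~\ref{pg:matched-return} to $A$ and $R$. This gives a bounded map $B:C_E(H)\to C_F$ with $\norm{B}\leq\norm{R}$, $BA=M_\lambda$, and $\lambda_{ij}=(RA(e_{ij}))_{ij}$. It also gives the support identities $B(u_iy)=e_{ii}B(y)$ and $B(yv_j)=B(y)e_{jj}$. The map $M_\lambda=BA$ is bounded, since $\norm{M_\lambda}\leq\norm{R}\norm{A}$. So $M_\lambda$ is a bounded Schur multiplier on $C_F$, and by hypothesis $\abs{\lambda_{ij}}\geq\delta$ on the lower pairs of an infinite set $I$.

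Since $t_F=\infty$, the second alternative of Lemma~\ref{pg:schur-stabilization} applies. Take $\varepsilon=\delta/2$. This gives an isometric rectangular insertion $J$, $J(e_{ij})=e_{r_ic_j}$ with disjoint row and column index sets, a contractive extraction $C$, and a scalar $\mu$ with $\abs{\mu}\geq\delta$. These satisfy
\[
 CM_\lambda J=\mu I+E_0,\qquad \norm{E_0}\leq\delta/2.
\]
The inverse $W=(\mu I+E_0)^{-1}$ therefore has norm at most $(\delta-\delta/2)^{-1}=2/\delta$. Now set $A_0=AJ$ and $R_0=WCB$. Then $R_0A_0=WCBAJ=WCM_\lambda J=I$. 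Since $J$ is isometric, $\norm{A_0}\leq\norm{A}$, and
\[
 \norm{R_0}\leq\frac{2}{\delta}\cdot1\cdot\norm{R}.
\]
The support identity for $A_0$ is immediate: $A_0(e_{ij})=A(e_{r_ic_j})=u_{r_i}A(e_{r_ic_j})v_{c_j}$.

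It remains to check the support identities for $R_0$, and this is the delicate step. By Lemma~\ref{pg:matched-return}, $B(u_{r_i}y)=e_{r_ir_i}B(y)$. The extraction $C$ is a compression $x\mapsto \tilde J^*x\tilde K$, where $\tilde J e_i=e_{r_i}$ and $\tilde Ke_j=e_{c_j}$ are coordinate isometries. Hence
\[
 C(e_{r_ir_i}x)=e_{ii}C(x),\qquad C(xe_{c_jc_j})=C(x)e_{jj}.
\]
This gives $CB(u_{r_i}y)=e_{ii}CB(y)$, and the column identity $CB(yv_{c_j})=CB(y)e_{jj}$ follows in the same way.

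Finally, $W$ must commute with left multiplication by $e_{ii}$ and right multiplication by $e_{jj}$. The proof of Lemma~\ref{pg:schur-stabilization} records that the inverse commutes with all source row and column projections. To justify this directly, observe that $CM_\lambda J$ is itself a Schur multiplier on the compressed indices: $C$ and $J$ are coordinate compressions and insertions, and $M_\lambda$ acts diagonally on matrix units. So $\mu I+E_0$ commutes with $x\mapsto e_{ii}x$ and $x\mapsto xe_{jj}$. These commutations pass to the Neumann series for $W$, and hence to $W$. Composing the three maps yields $R_0(u_{r_i}y)=e_{ii}R_0(y)$ and $R_0(yv_{c_j})=R_0(y)e_{jj}$, which completes the proof.
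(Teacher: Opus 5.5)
Your proposal is correct and follows the paper's proof essentially verbatim: you set $R_0=K^{-1}CB$, where $K=CM_\lambda J$ comes from Lemma~\ref{pg:schur-stabilization} with $\varepsilon=\delta/2$, and you derive the support identities from those of $B$, the coordinate extraction $C$, and the commutation of $K^{-1}$ with row and column projections. You also check directly that $K$ is the Schur multiplier with coefficients $\lambda_{r_ic_j}$, which justifies the commutation step that the paper only asserts.
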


\begin{proof}
Use $B$ from Lemma~\ref{pg:matched-return}, and apply
Lemma~\ref{pg:schur-stabilization} with $\varepsilon=\delta/2$ to $BA$.
Set $K=CM_\lambda J$ and $R_0=K^{-1}CB$. Then $R_0A_0=I$, with the stated
bounds. The support identities of $B$ from Lemma~\ref{pg:matched-return},
the rectangular extraction, and
commutation of $K^{-1}$ with row and column projections prove the last
assertion.
\end{proof}

\begin{lemma}\label{pg:finite-inner-head}
Let $H_0$ be a separable infinite-dimensional Hilbert space. Suppose
$A:C_F\to C_E(H_0\otimes\ell_2)$ is bounded and
$A(e_{ij})=b_{ij}\otimes e_{ij}$, and let $R:C_E(H_0\otimes\ell_2)\to C_F$
be any bounded map. Put $M=\norm{A}$ and $r=\norm{R}$. For an orthogonal
projection $p$ on $H_0$ of finite rank $d$, set
\[
 \mu_{ij}^p=(R((p\otimes I)A(e_{ij})))_{ij}.
\]
If there are pairs $(u_l,v_l)$ whose row indices are pairwise distinct and
whose column indices are pairwise distinct, and
$\abs{\mu_{u_lv_l}^p}\geq\delta>0$ for all $l$, then
\[
 \norm{a}_E\leq\frac{dMr}{\delta}\norm{a}_F,
 \qquad
 \norm{a}_F\leq\frac{dMr}{\delta}\norm{a}_E
 \quad(a\in c_{00}).
\]
The same assertion holds for right compression. Consequently, if $E$ and
$F$ are not canonically equal, then for each fixed finite-rank $p$,
\begin{equation}\label{pg:head-cofinal}
 \lim_{N\to\infty}\sup_{i>N,\,j>N}\abs{\mu_{ij}^p}=0,
\end{equation}
and the corresponding right-compression estimate holds as well.
\end{lemma}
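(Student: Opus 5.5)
The plan is to test both inequalities on one input $x\in C_F$ whose matrix units sit at the given positions. For the bound on $\norm{a}_F$ we push $x$ through $A$, the compression, and a sign-averaged version of $R$, and read off the diagonal pattern. For the bound on $\norm{a}_E$ we bound the compressed image of $x$ from below blockwise.

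\emph{Setup.} Fix $a=(a_1,\dots,a_m)\in c_{00}$. Put $b_l=b_{u_lv_l}$ and $\mu_l=\mu^p_{u_lv_l}$, and set
\[
 x=\sum_{l=1}^m a_le_{u_lv_l}\in C_F .
\]
Since the rows $u_l$ are distinct and the columns $v_l$ are distinct, $x$ is a partial permutation matrix weighted by $a$. Hence $\norm{x}_{C_F}=\norm{a}_F$, and likewise $\norm{\sum_l a_le_{u_lv_l}}_{C_E}=\norm{a}_E$. Also $\norm{b_l}=\norm{A(e_{u_lv_l})}\le M$. The compressed image is
\[
 y=(p\otimes I)Ax=\sum_l a_l\,pb_l\otimes e_{u_lv_l}.
\]
It is an orthogonal direct sum of the blocks $a_lpb_l$, each of rank at most $d$, acting between the mutually orthogonal subspaces $H_0\otimes e_{v_l}$ and $H_0\otimes e_{u_l}$.

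\emph{Upper bound for $y$.} Take a singular value decomposition of each block and group the $k$th rank-one terms, $k\le d$, across all $l$. Each group is a disjoint sum of rank-one operators with singular values $s_k(a_lpb_l)\le\abs{a_l}M$. So each group has $C_E$-norm at most $M\norm{a}_E$, and summing the $d$ groups gives
\[
 \norm{y}_{C_E}\le dM\norm{a}_E .
\]

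\emph{Lower bound for $y$.} By definition $\mu_l=(R(pb_l\otimes e_{u_lv_l}))_{u_lv_l}$, so
\[
 \delta\le\abs{\mu_l}\le r\norm{pb_l}_{C_E}\le rd\norm{pb_l}.
\]
Compressing $y$ by the projections onto the top left and right singular vectors of each block is contractive by \eqref{eq:orthogonal-projections}, since the families are orthogonal. The result is a disjoint sum of rank-one operators of norms $\abs{a_l}\norm{pb_l}\ge\abs{a_l}\delta/(rd)$. Solidity of $E$ then gives
\[
 \frac{\delta}{rd}\norm{a}_E\le\norm{y}_{C_E}\le\norm{Ax}\le M\norm{a}_F ,
\]
which is the first asserted inequality.

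\emph{Extracting the diagonal pattern.} For the reverse inequality we run the sign-averaging argument of Lemma~\ref{pg:matched-return}. Use target multipliers $I\otimes d_\epsilon$ on the left and $I\otimes d_\tau$ on the right; these commute with $p\otimes I$ and multiply $b_{ij}\otimes e_{ij}$ by $\epsilon_i\tau_j$. Use source signs $d_\epsilon,d_\tau$ as before. The averaged map
\[
 B'y=\E_{\epsilon,\tau}\,d_\epsilon R\bigl((I\otimes d_\epsilon)y(I\otimes d_\tau)\bigr)d_\tau
\]
has norm at most $r$. Its $(u_l,v_l)$ entry keeps only the term with the same row and column indices, and this is the term $k=l$ because rows and columns are distinct. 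So $(B'y)_{u_lv_l}=a_l\mu_l$. Then extract the pattern entries $\{(u_l,v_l)\}$ by the contractive diagonal-type projection \eqref{eq:orthogonal-projections}, with $u_l\mapsto e_{u_lu_l}$ and $v_l\mapsto e_{v_lv_l}$. This gives
\[
 \norm{(a_l\mu_l)_l}_F\le r\norm{y}_{C_E}\le rdM\norm{a}_E .
\]
Since $\abs{\mu_l}\ge\delta$, solidity yields $\norm{a}_F\le(dMr/\delta)\norm{a}_E$. Right compression $A(e_{ij})(p\otimes I)$ is handled identically.

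\emph{Consequence \eqref{pg:head-cofinal}.} Suppose \eqref{pg:head-cofinal} fails. Then there are $\delta>0$ and pairs $(i,j)$ with arbitrarily large $\min(i,j)$ and $\abs{\mu^p_{ij}}\ge\delta$. Choosing them inductively with both indices beyond all previously used ones produces infinitely many pairs with distinct rows and distinct columns. The estimates above then give $E=F$ canonically, contrary to the hypothesis; the same argument applies to the right-compression version.

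The main obstacle is the reverse bound: the sign average must not destroy the compression by $p\otimes I$, and cross terms between different $l$ must cancel. Both are secured because the signs act only on the $\ell_2$ tensor factor and because the rows and columns of the chosen pairs are distinct.
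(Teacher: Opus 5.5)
Your proposal is correct and follows the paper's proof essentially step for step. You use the same test operator $(p\otimes I)A\bigl(\sum_l a_le_{u_lv_l}\bigr)$ and the same sign-averaged return map from Lemma~\ref{pg:matched-return}, which produces $\sum_l a_l\mu_l e_{u_lv_l}$. You also compare that operator with $\norm{a}_E$ in the same way, splitting it into $d$ singular-value levels for the upper bound and keeping one top singular value per block for the lower bound. The only cosmetic differences are that you obtain $\norm{pb_l}\geq\delta/(rd)$ directly from the definition of $\mu_l$, and you realize the lower bound by a contractive compression onto top singular vectors, whereas the paper argues directly with singular-value lists.
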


\begin{proof}
Apply Lemma~\ref{pg:matched-return} to $A$ with
$u_i=I_{H_0}\otimes e_{ii}$ and $v_j=I_{H_0}\otimes e_{jj}$.
The resulting map $B$, of norm at most $r$, satisfies
$B(z\otimes e_{ij})=(R(z\otimes e_{ij}))_{ij}e_{ij}$.
Put $y_l=(pb_{u_lv_l})\otimes e_{u_lv_l}$ and $Ya=\sum_l a_ly_l$.
The operators $y_l$ have pairwise orthogonal left supports and pairwise
orthogonal right supports, rank at most $d$, and norm at most $M$.
The source $\sum_l a_le_{u_lv_l}$ has singular values $\abs{a_l}$, so
\[
 \norm{Ya}_{C_E}\leq M\norm{a}_F.
\]
The formula for $B$ gives
$B(Ya)=\sum_l a_l\mu_{u_lv_l}^p e_{u_lv_l}$. Solidity of $F$ therefore
gives
\begin{equation}\label{pg:head-return}
 \frac{\delta}{r}\norm{a}_F\leq\norm{Ya}_{C_E}\leq M\norm{a}_F.
\end{equation}
In particular $\norm{y_l}_{C_E}\geq\delta/r$. Since a rank at most $d$
operator has ideal norm at most $d$ times its largest singular value,
\[
 \frac{\delta}{rd}\leq s_1(y_l)\leq M.
\]
The singular values of $Ya$ are the disjoint union of
$\abs{a_l}s_t(y_l)$, $1\leq t\leq d$, with zero padding. Keeping the largest
one from each $l$, and then splitting the full union into its $d$ levels,
respectively yield
\[
 \frac{\delta}{rd}\norm{a}_E\leq\norm{Ya}_{C_E}\leq dM\norm{a}_E.
\]
Combining these bounds with \eqref{pg:head-return} proves both comparisons.
For right compression, $b_{u_lv_l}p$ has rank at most $d$ and
$B(Ya)$ has the same coordinate formula with the right-compression
coefficients, so the calculation applies unchanged.

If \eqref{pg:head-cofinal} fails, choose $\delta>0$ and successively choose
pairs beyond every previously used row and column index with coefficient
modulus at least $\delta$. The resulting norm comparisons imply $E=F$,
a contradiction. Equality of the sequence sets follows from the uniform
bounds on $c_{00}$ as explained in Section~\ref{sec:preliminaries}.
\end{proof}

\begin{theorem}\label{pg:matched-dichotomy}
Suppose $A:C_F\to C_E(H_0\otimes\ell_2)$ and
$R:C_E(H_0\otimes\ell_2)\to C_F$ are bounded, $RA=I$, and
\[
 A(e_{ij})=b_{ij}\otimes e_{ij}.
\]
Assume that there are finite-dimensional inner subspaces $\mathcal L_i$
and $\mathcal D_j$ such that
\[
 b_{ij}=p_{\mathcal L_i}b_{ij}p_{\mathcal D_j}
 \qquad(i,j\geq1).
\]
Their dimensions may be unbounded, and neither family is required to be
orthogonal or equal to the other family. Then either $E=F$ canonically,
or $F=\ell_2$ canonically.
\end{theorem}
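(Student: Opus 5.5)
We assume $E$ and $F$ are not canonically equal and aim to show $F=\ell_2$. The idea is to reduce to Theorem~\ref{pg:pure-hilbert} by splitting the inner Hilbert space $H_0$ into finite-dimensional pieces, giving a block structure of the type $Z=\overline{\bigoplus_t M_{d_t}(\K)}^{\,C_E}$.

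\textbf{Step 1: passing to a subsequence.} Since $E\neq F$, Lemma~\ref{pg:finite-inner-head} gives \eqref{pg:head-cofinal}. For each fixed finite-rank $p$, the coefficients $\mu^p_{ij}$, and their right-compression analogues, tend to $0$ as $\min(i,j)\to\infty$. Since $(RA(e_{ij}))_{ij}=1$, the mass of $b_{ij}$ must eventually escape every fixed finite-dimensional subspace, on the left and on the right.

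We choose an increasing sequence of indices $n_1<n_2<\cdots$ and finite-rank projections $p_1\le p_2\le\cdots$ on $H_0$ inductively. The projection $p_k$ should contain $\mathcal L_{n_i},\mathcal D_{n_i}$ for $i\le k$. The index $n_{k+1}$ is taken large enough that the coefficients of $p_k$ against all pairs involving $n_{k+1}$ and earlier indices are below $\epsilon_{k+1}$. Here $\epsilon_k$ is summable with weights $(2k-1)$, as in Lemma~\ref{pg:full-reproduction}. This step uses the finite dimensionality of $\mathcal L_i,\mathcal D_j$.

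\textbf{Step 2: cutting the images to a shell structure.} Restrict $A$ to the indices $(n_k)$, i.e.\ compose with a coordinate insertion, and set $\pi_k=p_k-p_{k-1}$. We replace $A(e_{ij})$ by
\[
(\pi_{\max(i,j)}\otimes I)\,A(e_{ij})\,(\pi_{\max(i,j)}\otimes I)
\]
or, more precisely, we discard the head part $p_{\max-1}$ on each side. The resulting perturbation is small in the sense of Step~1, in terms of the return coefficients. To make this precise, the left inverse $R$ is converted to one returning diagonal coefficients, via Lemma~\ref{pg:matched-return} applied with $u_i=I\otimes e_{ii}$ and $v_j=I\otimes e_{jj}$. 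We then estimate the lost part of the Schur multiplier $\lambda_{ij}$ with Lemma~\ref{pg:finite-inner-head}, and obtain, via Lemma~\ref{pg:schur-stabilization}, a new embedding $A'$ with a bounded left inverse.

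\textbf{Step 3: identifying the block space and concluding.} Let $G_k$ be the index set of the block $\pi_k H_0\otimes \operatorname{span}\{e_i: i\le k\}$. The image of the $k$th shell $\Pi_k$ then lies in the $k$th diagonal block, which is a matrix algebra of dimension $\operatorname{rank}(\pi_k)\cdot k$. Averaging with signs, as in Proposition~\ref{pg:reproduction}, produces $R'$ with $\Pi_kR'=R'Q_k$. Theorem~\ref{pg:pure-hilbert} then gives $F=\ell_2$.

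\textbf{Main obstacle.} The delicate point is Step~2. The tensor form means the left and right inner supports $\mathcal L_i,\mathcal D_j$ differ, so cross terms $\pi_k b\,\pi_l$ with $k\ne l$ are not controlled by the head estimate alone. If $t_F<\infty$, I would first try splitting into lower and upper triangular parts. If $t_F=\infty$, I would instead use the single-component alternative of Lemma~\ref{pg:matched-component-retraction}, to ensure that a single inner-block component carries a return coefficient bounded below.
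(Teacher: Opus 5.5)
There is a genuine gap in Steps~1--2. The decay \eqref{pg:head-cofinal} of Lemma~\ref{pg:finite-inner-head} holds only when \emph{both} indices exceed $N$. Along a fixed column nothing escapes: all $b_{ij}$ with the same $j$ have right support in the same finite-dimensional space $\mathcal D_j$, so their right compressions by $p_{\mathcal D_j}$ have return coefficient exactly $1$. The hypotheses are satisfied with $F\ne\ell_2$ by $x\mapsto ff^*\otimes x$ when $E=F$, so \eqref{pg:head-cofinal} is your only leverage, and it is silent here.

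Consequently the selection in Step~1 cannot be made. You require $\mathcal D_{n_1}\subseteq\operatorname{ran}p_k$, so $A(e_{n_{k+1}n_1})(p_k\otimes I)=A(e_{n_{k+1}n_1})$, and its return coefficient is $1$, not below $\epsilon_{k+1}$. The same fact destroys Step~2. For $i>j$ one has $\mathcal D_{n_j}\subseteq\operatorname{ran}p_j\subseteq\operatorname{ran}p_{i-1}$, hence $(\pi_i\otimes I)A(e_{n_in_j})(\pi_i\otimes I)=0$; symmetrically for $i<j$ through $\mathcal L_{n_i}$. Your cut therefore annihilates every off-diagonal unit. The return coefficients fall from $1$ to $0$, and Lemma~\ref{pg:schur-stabilization} has nothing to stabilize.

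Neither the triangular splitting nor Lemma~\ref{pg:matched-component-retraction} repairs this: after either, a fixed column still has a fixed finite-dimensional right support. More generally, take any cut by mutually orthogonal projections $w_k$, which is the shape needed for $Q_nV=V\Pi_n$ in Theorem~\ref{pg:pure-hilbert}. It satisfies $\norm{(p_{\mathcal D_{n_1}}\otimes e_{n_1n_1})w_k}\to0$. It therefore drives to zero the return coefficients of the units $e_{n_kn_1}$, which lie in different shells. The matched structure cannot be cut into the block-diagonal one.

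The missing idea is to stay inside the matched structure and compare $\ell_2$ with $F$ there. The head decay should be used only in the two-index direction where it is valid.
\begin{enumerate}
\item The paper rotates the inner spaces so that each fixed column and row lives in fixed finite-dimensional coordinate spaces. It extracts a subsequence with $b_{nk}\to B_k$ as $n\to\infty$ and perturbs to the stable profile \eqref{pg:stable-profiles}.
\item The single row $W\alpha=\sum_k\alpha_kB_k\otimes e_{1k}$ is bounded on $\ell_2$. A weak limit $H$ of row-extracted left inverses gives functionals $\beta_k$ with $\beta_k(B_k)=1$.
\item Only now does $E\ne F$ enter, as $\beta_k(p_mB_k)\to0$ when $k\to\infty$, for each fixed $m$. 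This permits cuts $c_l=h_lB_{k_l}$ with pairwise orthogonal ranges and $\abs{\beta_{k_l}(c_l)-1}\le1/4$.
\item Orthogonal ranges make $\sum_la_lc_l\otimes e_{nk_l}$ and $\sum_la_lc_l\otimes e_{n_lk_l}$ have the same singular values. The first comes from a rank-one source of norm $\norm{a}_2$; the second comes from a partial permutation of norm $\norm{a}_F$.
\end{enumerate}
Hence \eqref{pg:two-upper-comparisons}--\eqref{pg:parameter-profile-lower} bound the common quantity $Q(a)$ above and below by both $\norm{a}_2$ and $\norm{a}_F$. No reduction to Theorem~\ref{pg:pure-hilbert} is involved.
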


\begin{proof}
Lemma~\ref{pg:matched-return} replaces $R$ by a left inverse of no larger
norm satisfying, on $C_E(H_0\otimes\ell_2)$,
\begin{equation}\label{pg:matched-beta}
 R(z\otimes e_{ij})=\beta_{ij}(z)e_{ij}.
\end{equation}
It still satisfies $RA=I$. Write $M=\norm{A}$, $r=\norm{R}$, and choose
$0<\eta<1/r$.

Put $d_j=\dim\mathcal D_j$,
$l_i=\dim\mathcal L_i$, $s_m=\sum_{j\leq m}d_j$ and
$t_m=\sum_{i\leq m}l_i$. For each $i$, the nested subspaces
\[
 X_{i,m}=\operatorname{span}\{\operatorname{ran}b_{ij}:j\leq m\}
 \subseteq\mathcal L_i
\]
have dimensions at most $s_m$. Successive Gram--Schmidt orthogonalization of the images under $b_{ij}$
of bases of $\mathcal D_j$ gives a basis adapted to these subspaces.
Complete it in $\mathcal L_i$, and let a surjective isometry $U_i$ send it to the first
$l_i$ vectors of a fixed orthonormal basis $(f_k)$ of $H_0$. Then
$U_iX_{i,m}\subseteq\operatorname{span}\{f_1,\ldots,f_{s_m}\}$.
In the same way, from the nested spans of
$\operatorname{ran}b_{ij}^*$, $i\leq m$, choose surjective isometries $V_j$ such that
\[
 V_j\mathcal D_j=\operatorname{span}\{f_1,\ldots,f_{d_j}\},
 \quad
 V_j\operatorname{span}\{\operatorname{ran}b_{ij}^*:i\leq m\}
 \subseteq\operatorname{span}\{f_1,\ldots,f_{t_m}\}.
\]
Right and left multiplication preserve,
respectively, $\operatorname{ran}b_{ij}$ and
$\operatorname{ran}b_{ij}^*$. Hence the
two choices can be made independently. Set $U=\diag(U_i)$, $V=\diag(V_j)$,
and replace $A,R$ by $UA(\cdot)V^*$ and $R(U^*(\cdot)V)$. Norms,
\eqref{pg:matched-beta}, and the identity $RA=I$ are preserved.

Call the new coefficients $b'_{ij}$. For fixed $k$, every $b'_{ik}$ maps
the fixed $d_k$-dimensional coordinate space into the fixed
$s_k$-dimensional coordinate space; every $b'_{kj}$ maps the fixed
$t_k$-dimensional coordinate space into the fixed $l_k$-dimensional one.
These are bounded arrays in finite-dimensional
matrix spaces. Operator-norm convergence there implies ideal-norm
convergence, since a rank at most $q$ difference has norm at most $q$
times its operator norm. A countable diagonal subsequence therefore gives
one infinite set $S\subset\mathbb N$ such that, for every fixed
$k\in\mathbb N$,
\[
 b'_{nk}\longrightarrow B^{(k)},\qquad
 b'_{kn}\longrightarrow C^{(k)}
 \quad(n\to\infty,\ n\in S)
\]
in ideal norm, with finite inner matrices as limits.

Choose positive $\epsilon_{rs}$, $r>s$, with
$2\sum_{r>s}\epsilon_{rs}<\eta$. Recursively select increasing $n_r\in S$
so that for every $s<r$ both
\[
 \norm{b'_{n_rn_s}-B^{(n_s)}}_{C_E}<\epsilon_{rs},\qquad
 \norm{b'_{n_sn_r}-C^{(n_s)}}_{C_E}<\epsilon_{rs}
\]
hold. Only finitely many previously fixed indices occur at each step.
Let $J(e_{ij})=e_{n_i n_j}$ and let $C$ be its reverse
coordinate extraction on $C_F$. Let
$\Phi(z\otimes e_{ij})=z\otimes e_{n_i n_j}$ and let $\Psi$ be
its reverse compression on $C_E(H_0\otimes\ell_2)$. Calling the transformed pair
$A',R'$, set
\[
 \widetilde A=\Psi A'J,\qquad \widetilde R=CR'\Phi.
\]
The identity $A'(e_{ij})=b'_{ij}\otimes e_{ij}$ places $A'J$ in the
corner with outer row and column indices in $\{n_r:r\geq1\}$,
so $\widetilde R\widetilde A=I$, with norms
at most $M,r$.

Set $B_s=B^{(n_s)}$, $C_s=C^{(n_s)}$, and $D_s=b'_{n_sn_s}$, and define
on finite matrices
\begin{equation}\label{pg:stable-profiles}
 A_0e_{nk}=B_k\otimes e_{nk},\quad
 A_0e_{kn}=C_k\otimes e_{kn}\quad(n>k),
 \qquad A_0e_{kk}=D_k\otimes e_{kk}.
\end{equation}
The coefficient maps $x\mapsto x_{rs}(A_0-\widetilde A)e_{rs}$ have norms
whose sum is less than $\eta$. They therefore define an operator
$E_0=A_0-\widetilde A$ with $\norm{E_0}<\eta$. Define
\[
 R_0=(I+\widetilde R E_0)^{-1}\widetilde R,
 \qquad M_0=M+\eta,
 \qquad r_0=\frac{r}{1-r\eta}.
\]
The Neumann series gives $R_0A_0=I$, $\norm{A_0}\leq M_0$ and
$\norm{R_0}\leq r_0$. The operator $\widetilde R E_0$ is a coordinate
multiplier, so it and its Neumann inverse commute with all source row and
column projections. Thus $R_0$ satisfies \eqref{pg:matched-beta} on the
whole target.

Define, initially on finitely supported vectors,
\[
 W\alpha=\sum_k\alpha_k B_k\otimes e_{1k}.
\]
For $n$ larger than the support of $\alpha$, this operator is obtained by
moving the single outer row of $A_0(\sum_k\alpha_ke_{nk})$ to row $1$.
Consequently $\norm{W\alpha}_{C_E}\leq M_0\norm{\alpha}_2$, and $W$ extends to
$\ell_2$. Let $c_n:C_F\to\ell_2$ extract the $n$th row, a contraction,
and put
\[
 H_ny=c_nR_0((I\otimes e_{n1})y).
\]
Then $\norm{H_n}\leq r_0$ and $H_nWe_k=e_k$ for $n>k$.
Separability of $C_E(H_0\otimes\ell_2)$ and weak sequential compactness of bounded
subsets of $\ell_2$ give a subsequence $n_t\to\infty$ for which $H_{n_t}y$
converges weakly on a countable dense set. The uniform bound extends
this convergence to every $y$ by norm approximation. The limits define a
bounded linear map $H$ satisfying
\[
 \norm{H}\leq r_0,\qquad HW=I_{\ell_2}.
\]
Equation~\eqref{pg:matched-beta} implies that each $H_n$ sends outer
column $k$ into coordinate $k$; so does $H$. Therefore the functionals
\[
 \beta_k(z)=(H(z\otimes e_{1k}))_k
\]
have norm at most $r_0$ and satisfy $\beta_k(B_k)=1$.

Assume now that $E$ and $F$ are not canonically equal. Let $p_m$ project
onto the first $m$ inner coordinates. Lemma~\ref{pg:finite-inner-head},
applied to $A_0,R_0$, gives \eqref{pg:head-cofinal}. For $n>k$,
\[
 (H_n((p_mB_k)\otimes e_{1k}))_k
   =(R_0((p_mB_k)\otimes e_{nk}))_{nk}=\mu_{nk}^{p_m}.
\]
For any fixed $m$, it follows that
\begin{equation}\label{pg:profile-head-decay}
 \beta_k(p_mB_k)\longrightarrow0\quad(k\to\infty).
\end{equation}
Indeed, given $\varepsilon>0$, \eqref{pg:head-cofinal} supplies $N$ such that
every $k>N$ and every sufficiently late $n_t>\max(k,N)$ give a scalar of
modulus at most $\varepsilon$. Passing to the weak limit for this fixed
$k$ preserves that bound. The cutoff $N$ may depend on $m$.

Starting with $m_0=0$, use \eqref{pg:profile-head-decay} to choose increasing
$k_l$ such that
$\abs{\beta_{k_l}(p_{m_{l-1}}B_{k_l})}\leq1/4$. Since $B_{k_l}$ is a finite
inner matrix, choose $m_l>m_{l-1}$ with $p_{m_l}B_{k_l}=B_{k_l}$. Put
\[
 h_l=p_{m_l}-p_{m_{l-1}},\qquad
 c_l=h_lB_{k_l},\qquad \gamma_l=\beta_{k_l}(c_l).
\]
Then the final supports of the $c_l$ are pairwise orthogonal and
\begin{equation}\label{pg:band-return}
 \abs{\gamma_l-1}\leq1/4,\qquad \abs{\gamma_l}\geq3/4.
\end{equation}
The formula
\[
 \mathcal Py=\sum_l(h_l\otimes I)y(I\otimes e_{k_lk_l})
\]
defines a contraction, with convergence in ideal norm, by
\eqref{eq:orthogonal-projections}: the projections $(h_l\otimes I)_l$
and $(I\otimes e_{k_lk_l})_l$ are separately pairwise orthogonal.

For a finite scalar vector $a=(a_1,\ldots,a_q)$, put
\[
 Q(a)=\norm{\sum_{l\leq q}a_lc_l\otimes e_{1k_l}}_{C_E}.
\]
The operators
$\sum_l a_lc_l\otimes e_{nk_l}$ with one common row, and
$\sum_l a_lc_l\otimes e_{n_lk_l}$ with pairwise distinct rows, have the
same singular values. In the first case their squared absolute value is
block diagonal with blocks $\abs{a_l}^2c_l^*c_l$ because $c_l^*c_s=0$ for
$l\ne s$; in the second case outer row orthogonality gives the same
identity. Thus both norms equal $Q(a)$.

Choosing all the rows greater than $k_q$ and using
\eqref{pg:stable-profiles}, contractivity of $\mathcal P$, and boundedness
of $A_0$ yields
\begin{equation}\label{pg:two-upper-comparisons}
 Q(a)\leq M_0\norm{a}_2,\qquad Q(a)\leq M_0\norm{a}_F.
\end{equation}
The matrix $\sum_{l\leq q}a_le_{nk_l}$ has rank at most one;
$\sum_{l\leq q}a_le_{n_lk_l}$ is a weighted partial permutation with
singular values $\abs{a_l}$. Since $H$
sends outer column $k$ to coordinate $k$, \eqref{pg:band-return} gives
\begin{equation}\label{pg:hilbert-profile-lower}
 \frac34\norm{a}_2
 \leq\norm{H\left(\sum_{l\leq q}a_lc_l\otimes e_{1k_l}\right)}_2
 \leq r_0Q(a).
\end{equation}
For each of these finitely many $l$, the definition of the same weak
limit $H$ gives
\[
 (R_0(c_l\otimes e_{n_tk_l}))_{n_tk_l}\longrightarrow\gamma_l.
\]
Choose pairwise distinct $n_l$ from this one subsequence, all exceeding
$k_q$, for which the corresponding scalars $\theta_l$ satisfy
$\abs{\theta_l-\gamma_l}\leq1/4$. Equation~\eqref{pg:matched-beta} gives
\[
 R_0\left(\sum_{l\leq q}a_lc_l\otimes e_{n_lk_l}\right)
    =\sum_{l\leq q}a_l\theta_le_{n_lk_l}.
\]
As $\abs{\theta_l}\geq1/2$, solidity implies
\begin{equation}\label{pg:parameter-profile-lower}
 \frac12\norm{a}_F\leq r_0Q(a).
\end{equation}
The quantity $Q(a)$ is independent of these row choices. Combining
\eqref{pg:two-upper-comparisons}--\eqref{pg:parameter-profile-lower} proves
\[
 \norm{a}_F\leq2r_0M_0\norm{a}_2,\qquad
 \norm{a}_2\leq\tfrac43r_0M_0\norm{a}_F
 \quad(a\in c_{00}).
\]
The bounds hold for every finite $a$ with the same constants, and hence
$F=\ell_2$ by density and coordinate continuity.
\end{proof}

\begin{theorem}\label{pg:unbounded-uniqueness}
If $t_F=\infty$ and $C_F$ embeds complementably into $C_E$, then $E=F$
canonically. Consequently an isomorphism $C_E\simeq C_F$, with triangular
truncation unbounded on either ideal, forces canonical equality.
\end{theorem}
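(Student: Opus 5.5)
Start from a complemented embedding $T:C_F\to C_E$ with left inverse $S$, $ST=I$. Apply Lemma~\ref{pg:full-reproduction} with $Y=C_E$ and $H_N=P_N$, choosing $\eta\norm{S}<1$. This gives an embedding $V$ and a left inverse $R_1$ with $R_1V=I$ and $(P_{N_k}-P_{N_{k-1}})V=V\Pi_k$.

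Since $t_F=\infty$, we may discard the diagonal part. Restrict to an infinite rectangular source region away from the diagonal, as in the insertion $e_{ij}\mapsto e_{2i,2j-1}$ of Lemma~\ref{pg:schur-stabilization}. Apply Lemma~\ref{pg:coarse-components} to $V,R_1$. It yields five components $C_{11},C_{12},C_{21},C_{10},C_{01}$ and an infinite set on whose lower pairs one component $C_{ab}$ satisfies $\abs{(R_1C_{ab}(e_{ij}))_{ij}}\ge 1/5$.

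Each component has a matched support structure, with the following left and right support projections:
\begin{itemize}
 \item $C_{12}(e_{ij})\in q_iC_Eq_j$, with left supports $u_i=q_i$ and right supports $v_j=q_j$;
 \item $C_{21}$, after transposition, also has supports $u_i=q_i$, $v_j=q_j$;
 \item $C_{11}(e_{ij})\in q_iC_Eq_i$, where the right support is indexed by $i$ and not by $j$;
 \item $C_{10}$ and $C_{01}$ have one support indexed by $i$ and the other not tied to $j$.
\end{itemize}
For $C_{12}$ and $C_{21}$ we apply Lemma~\ref{pg:matched-component-retraction} directly with $\delta=1/5$. This gives a rectangular restriction $A_0$ with a left inverse $R_0$ obeying the row/column support identities. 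For the other three we need a preliminary reduction. I would handle $C_{11}$, $C_{10}$, $C_{01}$ by showing that the Ramsey-homogeneous lower-triangular structure, combined with $t_F=\infty$ and Lemma~\ref{pg:schur-stabilization}, forces the return coefficients on them to vanish or reduces them to a matched form. A cleaner alternative is to note that a row-only support (as for $C_{11}$) makes the return a Schur multiplier supported below the diagonal. The first assertion of Lemma~\ref{pg:schur-stabilization} would then give $t_F<\infty$, a contradiction. Hence the heavy component can be taken matched.

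With a matched pair $(A_0,R_0)$ and $u_i,v_j$ orthogonal families, identify $\ell_2$ on which $C_E$ acts with $H_0\otimes\ell_2$ by fixing isometries of each $q_i\ell_2$ (finite-dimensional, by the blocking $N_k$) into a common $H_0$. Then $A_0(e_{ij})=b_{ij}\otimes e_{ij}$, where $b_{ij}$ maps the finite-dimensional space $\mathcal D_j$ (the image of $q_{c_j}$) into $\mathcal L_i$ (the image of $q_{r_i}$). This is exactly the setting of Theorem~\ref{pg:matched-dichotomy}, which yields either $E=F$ or $F=\ell_2$ canonically. But $F=\ell_2$ gives $C_F=C_2$, on which triangular truncation is bounded, contradicting $t_F=\infty$. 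Hence $E=F$.

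For the consequence, suppose $C_E\simeq C_F$. By Lemma~\ref{pg:triangle-invariance}, $t_E=\infty$ if and only if $t_F=\infty$, so we may assume $t_F=\infty$ (swapping roles if needed). Applying the first part gives canonical equality.

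The main obstacle is the reduction of the unmatched components $C_{11},C_{10},C_{01}$. One must check carefully that their returns really give a one-sided triangular Schur multiplier, so that unboundedness of $L_F$ rules them out, or else convert them to a matched situation.
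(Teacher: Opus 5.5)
This is essentially the paper's proof, and your ``cleaner alternative'' is exactly how the paper disposes of $C_{11},C_{10},C_{01}$. These components vanish on every $e_{ij}$ with $i\le j$ because of the max-support property \eqref{pg:max-support}, not because of row-only support. The sign averaging of Lemma~\ref{pg:matched-return}, with $A=I_{C_F}$ and applied to $R_1C_{ab}$, then yields a bounded Schur multiplier with coefficients $(R_1C_{ab}(e_{ij}))_{ij}$ supported strictly below the diagonal, so the first assertion of Lemma~\ref{pg:schur-stabilization} gives $t_F<\infty$, a contradiction.

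The rectangular restriction you insert before Lemma~\ref{pg:coarse-components} is not needed there; it is carried out afterwards inside Lemma~\ref{pg:matched-component-retraction}. Your remaining steps match the paper: the identification with $H_0\otimes\ell_2$, Theorem~\ref{pg:matched-dichotomy}, the exclusion of $F=\ell_2$, and the orientation via Lemma~\ref{pg:triangle-invariance}.
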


\begin{proof}
By Lemmas~\ref{pg:full-reproduction} and~\ref{pg:coarse-components},
there is a component $C_{ab}$ for which
\[
 \abs{(RC_{ab}(e_{ij}))_{ij}}\geq1/5
\]
on all lower pairs of an infinite source set. Apply
Lemma~\ref{pg:matched-return} with $A=I_{C_F}$, the bounded map $RC_{ab}$,
and scalar coordinate projections. It gives a bounded Schur multiplier
with coefficients $(RC_{ab}(e_{ij}))_{ij}$. For any of the lower-only components
$C_{11},C_{10},C_{01}$ the multiplier vanishes on the upper triangle and
diagonal. Lemma~\ref{pg:schur-stabilization} would then imply $t_F<\infty$,
a contradiction. The selected component is therefore $C_{12}$ or $C_{21}$.

Transposing the target of $C_{21}$ and precomposing $R$ with transpose
preserves the coefficients $(RC_{21}(e_{ij}))_{ij}$ and puts its values
in $q_iC_Eq_j$. Transposition is linear and isometric.
Thus either component gives a map $A$ satisfying
$A(e_{ij})\in q_iC_Eq_j$. By Lemma~\ref{pg:matched-component-retraction},
a restriction $AJ$, with $J(e_{ij})=e_{r_i c_j}$, has a bounded
left inverse and satisfies $AJ(e_{ij})\in q_{r_i}C_Eq_{c_j}$.
Denote $AJ$ again by $A$ and its left inverse by $R_0$.

Choose Hilbert isometries $J_L,J_R$ which place $q_{r_i}\ell_2$ and
$q_{c_j}\ell_2$ in $H_0\otimes e_i$ and $H_0\otimes e_j$, respectively. Since each is
finite dimensional, the remaining Hilbert-space complements can be embedded
in unused inner coordinates. Replace the forward map by
$x\mapsto J_LA(x)J_R^*$ and its left inverse by
$y\mapsto R_0(J_L^*yJ_R)$. These transformations preserve singular values,
boundedness, and the left-inverse identity. The transformed coefficients
have finite-dimensional inner left supports depending only on $i$ and
right supports depending only on $j$.

Theorem~\ref{pg:matched-dichotomy} therefore gives either $E=F$ canonically
or $F=\ell_2$ canonically. The
latter would make the norm of $C_F$ equivalent to the Hilbert--Schmidt
norm, in which $L_F$ is a contraction. Equivalence of norms would make
$L_F$ bounded also on $C_F$, contradicting the hypothesis. Thus $E=F$.
For an onto isomorphism orient the map with an ideal having unbounded
triangular truncation as its source and use its inverse as left inverse.
\end{proof}

Let $X_-(E)$ be the norm closure in $C_E$ of the finite strictly lower
triangular matrices, and define $X_-(F)$ in the same way.
If $t_E<\infty$, corner approximation gives
$X_-(E)=L_E(C_E)$.

\begin{lemma}\label{pg:hilbert-head}
Let $F$ be a normalized separable symmetric sequence space, and suppose that
$B:C_F\to C_F$ has a factorization $B=CA$ through a Hilbert space $H$.
Put $a=\norm{A}$, $b=\norm{C}$, and $\lambda_{ij}=(Be_{ij})_{ij}$.
The Schur map $M_\lambda e_{ij}=\lambda_{ij}e_{ij}$ factors through a Hilbert
space with factorization product at most $ab$.

If $(u_l)$ and $(v_l)$ are separately pairwise distinct and
$\abs{\lambda_{u_l v_l}}\geq\delta>0$ for every $l$, then
\begin{equation}\label{pg:hilbert-head-comparison}
 \frac{\delta}{ab}\norm{c}_2\leq\norm{c}_F
       \leq\frac{ab}{\delta}\norm{c}_2
 \qquad(c\in c_{00}).
\end{equation}
Consequently, if $F$ is not canonically $\ell_2$, then
\begin{equation}\label{pg:hilbert-head-cofinal}
 \lim_{N\to\infty}\sup_{i,j>N}\abs{(Be_{ij})_{ij}}=0.
\end{equation}

In particular, let $V:C_F\to C_E(H_1)$ and $R:C_E(H_1)\to C_F$ be bounded,
and let $p$ be a finite-rank Hilbert projection, of rank $d\geq1$.
The operator $x\mapsto R((Vx)p)$ has a Hilbert factorization with product at
most $d\norm{V}\norm{R}$. The same assertion holds for $x\mapsto R(pVx)$, and both
operators satisfy \eqref{pg:hilbert-head-cofinal} when $F$ is not canonically
$\ell_2$.
\end{lemma}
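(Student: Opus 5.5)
We first show that the Schur map $M_\lambda$ factors through a Hilbert space, then test it on weighted partial permutations to get the two-sided comparison. Everything rests on the averaging trick already used in Lemma~\ref{pg:matched-return}.

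\emph{Factorization of $M_\lambda$.} For finitely many indices $\le m$, with independent sign vectors $\epsilon,\tau$ and the diagonal sign operators $d_\epsilon,d_\tau$, we have
\[
 M_\lambda x=\E_{\epsilon,\tau}\,d_\epsilon B(d_\epsilon x d_\tau)d_\tau
\]
on corners $P_mC_F$. This is the same expansion as in Lemma~\ref{pg:matched-return}. To keep a Hilbert factorization, we write $B=CA$ and set
\[
 \widetilde A x=(A(d_\epsilon x d_\tau))_{\epsilon,\tau}\in L_2(\Omega;H),\qquad
 \widetilde C(h_{\epsilon,\tau})=\E\, d_\epsilon C(h_{\epsilon,\tau})d_\tau .
\]
Here $L_2(\Omega;H)$ is again a Hilbert space. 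Since $d_\epsilon$ and $d_\tau$ are isometries on $C_F$, we get $\norm{\widetilde A}\le a$. Also $\norm{\widetilde C}\le b$, because $\E\norm{Ch}\le b\,\E\norm{h}\le b(\E\norm{h}^2)^{1/2}$. Thus $\norm{M_\lambda|_{P_mC_F}}$ factors with product at most $ab$, uniformly in $m$. A weak-limit argument, or simply using only finite corners below, gives the global statement.

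\emph{Two-sided comparison.} Fix $c\in c_{00}$ of length $q$ and the first $q$ pairs $(u_l,v_l)$, and put $x=\sum_l c_le_{u_lv_l}$. Since $x$ is a weighted partial permutation, $\norm{x}_{C_F}=\norm{c}_F$. We use two facts.

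1. \emph{Hilbert cotype-2 and type-2 estimates from the factorization.} Averaging over independent signs $\sigma_l$ gives
\[
 \E\norm{\sum_l\sigma_lc_l\lambda_{u_lv_l}e_{u_lv_l}}_{C_F}=\E\norm{M_\lambda(\textstyle\sum\sigma_lc_le_{u_lv_l})}\le\norm{\widetilde C}\,\E\norm{\widetilde A(\cdots)}_{\mathcal H}.
\]
By parallelogram orthogonality in the Hilbert space $\mathcal H$, this is at most $b\bigl(\sum_l\abs{c_l}^2\norm{\widetilde Ae_{u_lv_l}}^2\bigr)^{1/2}\le ab\norm{c}_2$. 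Sign changes and solidity do not change the $F$-norm of the left-hand side, so $\delta\norm{c}_F\le ab\norm{c}_2$.

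2. \emph{The reverse inequality.} We apply the same orthogonality in $\mathcal H$ in the other direction: $\E\norm{\sum_l\sigma_lc_l\widetilde Ae_{u_lv_l}}^2=\sum_l\abs{c_l}^2\norm{\widetilde Ae_{u_lv_l}}^2$. We also need $\norm{\widetilde Ae_{u_lv_l}}\ge\delta/b$, which follows from $\abs{\lambda_{u_lv_l}}\le\norm{M_\lambda e_{u_lv_l}}\le b\norm{\widetilde Ae_{u_lv_l}}$. Hence
\[
 \frac{\delta}{b}\norm{c}_2\le\Bigl(\E\norm{\widetilde A(\textstyle\sum\sigma_lc_le_{u_lv_l})}^2\Bigr)^{1/2}\le a\norm{c}_F,
\]
using again that the signed partial permutation has norm $\norm{c}_F$. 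This gives \eqref{pg:hilbert-head-comparison}.

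\emph{Cofinal decay.} If \eqref{pg:hilbert-head-cofinal} fails, we choose $\delta>0$ and inductively pick pairs $(u_l,v_l)$, beyond all previously used rows and columns, with $\abs{\lambda_{u_lv_l}}\ge\delta$. Then \eqref{pg:hilbert-head-comparison} and the completion argument of Section~\ref{sec:preliminaries} give $F=\ell_2$ canonically, a contradiction.

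\emph{The particular case.} For $x\mapsto R((Vx)p)$, write $p=\sum_{k\le d}f_kf_k^*$, so that $(Vx)p=\sum_k (Vx)f_k\otimes f_k^*$. Factor through $H=H_1^{\oplus d}$ via $x\mapsto((Vx)f_k)_k$, which has norm at most $\sqrt d\norm{V}$, since each column map is bounded by $\norm{V}$ because a column has ideal norm equal to its Euclidean norm. The return map $(h_k)\mapsto R(\sum_k h_k\otimes f_k^*)$ has norm at most $\sqrt d\norm{R}$, since each rank-one summand has ideal norm $\norm{h_k}$ and Cauchy--Schwarz applies. The product is therefore $d\norm{V}\norm{R}$. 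Left compression is symmetric, using rows. The main care point is step 2, namely checking that the signed partial permutation norms and the orthogonality in $L_2(\Omega;H)$ are used correctly.
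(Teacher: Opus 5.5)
Your proposal is correct and follows essentially the paper's route: sign-averaging factors $M_\lambda$ through $L_2(\Omega;H)$, and Rademacher orthogonality is tested on weighted partial permutations (your use of solidity on the outputs of $M_\lambda$ is equivalent to the paper's explicit left inverse $(DM_\lambda J)^{-1}D\mathcal Q$); the same recursive selection gives the cofinal decay, and a $d$-fold Hilbert space (equivalently, Hilbert--Schmidt operators supported by $p$) handles the compression case. The one loose point is the global factorization of $M_\lambda$, which your ``weak-limit argument'' leaves vague, although finite corners suffice for everything else; the paper obtains it directly by using all signs on $\Omega=\{-1,1\}^{\mathbb N}\times\{-1,1\}^{\mathbb N}$ with a Bochner integral, justified by the norm continuity of $\omega\mapsto d_\varepsilon x d_\eta$.
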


\begin{proof}
Let $\Omega=\{-1,1\}^{\mathbb N}\times\{-1,1\}^{\mathbb N}$ with its product
probability measure, and write $d_\varepsilon,d_\eta$ for the corresponding
diagonal sign operators. The map
$\omega\mapsto d_\varepsilon x d_\eta$ is norm continuous for every $x\in C_F$:
this holds for finite matrices, and their norm approximations are uniform in
the signs. Replace $H$ by the separable closed span of $A(C_F)$ and define
\[
 \mathcal Ux(\omega)=A(d_\varepsilon x d_\eta),\qquad
 \mathcal Qh=\int_\Omega d_\varepsilon C(h(\omega))d_\eta\,d\omega.
\]
Then $\mathcal U:C_F\to L_2(\Omega;H)$ has norm at most $a$.
The second integral is a Bochner integral in $C_F$. For simple $h$ its strong
measurability follows from the continuous sign action; approximation by
simple functions gives it for arbitrary $h\in L_2(\Omega;H)$. Moreover,
\[
 \int_\Omega\norm{d_\varepsilon C(h(\omega))d_\eta}_{C_F}\,d\omega
 \leq b\norm{h}_{L_1(H)}\leq b\norm{h}_{L_2(H)}.
\]
Thus $\norm{\mathcal Q}\leq b$. On a matrix unit, its $(k,l)$ coefficient is
\[
 (\mathcal Q\mathcal Ue_{ij})_{kl}
   =(Be_{ij})_{kl}\E(\varepsilon_i\varepsilon_k\eta_j\eta_l)
   =\lambda_{ij}\,\mathbf1_{\{(k,l)=(i,j)\}}.
\]
Continuous coordinate functionals pass through the Bochner integral. Hence
$M_\lambda=\mathcal Q\mathcal U$ on the entire ideal by density.

For the second assertion, the map
$Jc=\sum_l c_l e_{u_l v_l}$ is an isometry $F\to C_F$.
Its coordinate extraction $D:C_F\to F$ is contractive, by row and column
compression followed by $\mathcal D_F$; density gives its range in $F$. The map
$DM_\lambda J$ is the diagonal multiplier with coefficients
$\lambda_{u_l v_l}$; its inverse on $F$ has norm at most $\delta^{-1}$.
Therefore
\[
 T=\mathcal UJ,\qquad
 Z=(DM_\lambda J)^{-1}D\mathcal Q
\]
satisfy $ZT=I_F$, $\norm{T}\leq a$, and $\norm{Z}\leq b/\delta$.
In particular
$\delta/b\leq\norm{Te_l}_{L_2(\Omega;H)}\leq a$. For finite $c$, independent signs give
\[
 \E\norm{\sum_l\varepsilon_l c_l Te_l}_{L_2(\Omega;H)}^2
       =\sum_l\abs{c_l}^2\norm{Te_l}_{L_2(\Omega;H)}^2.
\]
The upper and lower bounds for $T$ apply to every signed vector, whose
$F$ norm equals $\norm{c}_F$. Comparing both sides also with
$(\delta/b)^2\norm{c}_2^2$ and $a^2\norm{c}_2^2$ proves
\eqref{pg:hilbert-head-comparison}.
If \eqref{pg:hilbert-head-cofinal} fails, a fixed $\delta>0$ and a recursive
selection beyond all previously chosen row and column indices give the
paired sequence just considered. Equation~\eqref{pg:hilbert-head-comparison}
would imply that $F$ is canonically $\ell_2$, a contradiction.

Finally, every operator $z=zp$ has rank at most $d$, and normalization gives
\[
 \norm{z}_2\leq\sqrt d\,\norm{z}_{C_E},\qquad
 \norm{z}_{C_E}\leq\norm{z}_1\leq\sqrt d\,\norm{z}_2.
\]
Factor $x\mapsto R((Vx)p)$ through the Hilbert space of Hilbert--Schmidt
operators with right support $p$. The two maps have norms at most
$\sqrt d\,\norm{V}$ and $\sqrt d\,\norm{R}$. Left support is treated in exactly
the same way, using the Hilbert space of Hilbert--Schmidt operators satisfying
$z=pz$.
\end{proof}

\begin{remark}
The limit in \eqref{pg:hilbert-head-cofinal} requires both matrix
indices $i,j$ to tend to infinity. In the compression assertion of
Lemma~\ref{pg:hilbert-head}, the projection $p$ on $H_1$ must have finite
rank. After isometric insertion of finitely many finite-dimensional
support spaces, use the projection onto the span of their images.
\end{remark}

\begin{lemma}\label{pg:interlacing}
If $t_E<\infty$, there are bounded maps
$J_E:C_E\to X_-(E)$ and $Q_E:X_-(E)\to C_E$ such that
\[
 Q_EJ_E=I,\qquad \norm{J_E}\leq2t_E,\qquad \norm{Q_E}\leq2.
\]
The same assertion holds with $E$ replaced by $F$.
\end{lemma}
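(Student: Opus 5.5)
We need maps $J_E:C_E\to X_-(E)$ and $Q_E:X_-(E)\to C_E$ with $Q_EJ_E=I$, $\norm{J_E}\le 2t_E$ and $\norm{Q_E}\le2$. The plan is to spread a matrix into strictly lower triangular position by interlacing indices.

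\textbf{The embedding.} Take the insertion $J_0(e_{ij})=e_{2i,2j-1}$. This is an isometry $C_E\to C_E$, since it is the conjugation $x\mapsto U_{\mathrm{ev}}xU_{\mathrm{odd}}^*$ with coordinate isometries $U_{\mathrm{ev}}e_i=e_{2i}$ and $U_{\mathrm{odd}}e_j=e_{2j-1}$. Its range is not yet lower triangular: $2i>2j-1$ holds exactly when $i\ge j$. The entries with $i<j$ land strictly above the diagonal. So we set
\[
J_E=L_E\,J_0 .
\]
This kills the upper part, gives $\norm{J_E}\le t_E$, and its values lie in $X_-(E)$, because $L_E$ maps finite matrices to finite strictly lower matrices and $X_-(E)=L_E(C_E)$ is closed. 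The difficulty is that $J_E$ alone loses the upper-triangular part $i<j$ of $x$.

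\textbf{Recovering the upper part.} To fix this, also place the transpose-type data. Put two copies on disjoint index blocks:
\[
J_Ex=L_E\bigl(U_1xU_2^*\bigr)+L_E\bigl(U_3x^{\mathsf t}U_4^*\bigr)^{\phantom{t}},
\]
chosen so that the first term keeps the entries $i\ge j$ of $x$ and the second keeps the entries $i<j$, via $x^{\mathsf t}$. Concretely, use indices modulo $4$:
\[
e_{ij}\mapsto e_{4i,4j-1}\quad\text{(first term)},\qquad
e_{ji}\mapsto e_{4i-2,4j-3}\quad\text{(second term)}.
\]
The two images lie in orthogonal row sets and orthogonal column sets. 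Each term has norm at most $t_E\norm{x}$, since transpose is an isometry, which gives $\norm{J_E}\le2t_E$.

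Define $Q_E$ by extracting the two blocks:
\[
Q_Ey=U_1^*yU_2\ \text{restricted to the entries } i\ge j\;+\;\bigl(U_3^*yU_4\bigr)^{\mathsf t}\ \text{restricted to the entries } i<j .
\]
As written, the restrictions reintroduce triangular truncation, and that is the main obstacle.

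\textbf{Avoiding a second truncation.} The point is to avoid truncating inside $Q_E$. The key observation is that if $y$ is strictly lower triangular, then the compression $U_1^*yU_2$ automatically has zero entries for $i<j$, because those positions satisfy $4i<4j-1$ and are upper triangular. Likewise the compression $U_3^*yU_4$, after transposing, has zero entries at the pairs already covered by the first block, provided the index offsets are chosen so that $4i-2>4j-3$ holds exactly when $i\ge j$, for the transposed pair. So on $X_-(E)$ we can take
\[
Q_Ey=U_1^*yU_2+\bigl(U_3^*yU_4\bigr)^{\mathsf t}
\]
with no truncation at all. Each summand is a contractive compression followed by an isometry, so $\norm{Q_E}\le2$.

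\textbf{Verification and extension.} What remains is to check on matrix units that the diagonal entries $i=j$ are not counted twice and that $Q_EJ_E=I$. If an offset collision appears, adjust the residues (for example $e_{ij}\mapsto e_{4i,4j-3}$ against $e_{4i-1,4j-2}$) until the strict inequalities split the pairs into $i\ge j$ and $i<j$ exactly. Density of finite matrices then extends the identity $Q_EJ_E=I$ to all of $C_E$. The argument for $F$ is identical.
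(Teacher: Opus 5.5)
Your strategy is the paper's: interlace a copy of $x$ and a copy of $x^{\mathsf t}$ on disjoint rows and columns, let $L_E$ keep the lower part of the first copy and (through the transpose) the strictly upper part of $x$ from the second, and invert with two compressions and a transpose. The paper takes $u_ee_i=e_{2i}$, $u_oe_i=e_{2i-1}$ and
\[
 J_Ex=L_E(u_exu_o^*-u_ox^{\mathsf t}u_e^*),\qquad Q_Ey=u_e^*yu_o-(u_o^*yu_e)^{\mathsf t},
\]
so $J_Ee_{ij}=e_{2i,2j-1}$ for $i\ge j$ and $J_Ee_{ij}=-e_{2j-1,2i}$ for $i<j$.

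Your index bookkeeping, which is the whole content of the lemma, fails as written. In your second term the source entry $(j,i)$ goes to $(4i-2,4j-3)$. This position is strictly lower exactly when $i\ge j$, so both copies keep the diagonal: $J_Ee_{ii}=e_{4i,4i-1}+e_{4i-2,4i-3}$ and $Q_EJ_Ee_{ii}=2e_{ii}$. The condition you impose in the third paragraph, that $4i-2>4j-3$ hold exactly when $i\ge j$, is the wrong target. The transposed copy must be retained only for $i>j$ strictly, so the diagonal of $x^{\mathsf t}$ has to land on or above the main diagonal. The diagonal of the first copy, by contrast, has to land strictly below it. Your fallback $e_{ij}\mapsto e_{4i,4j-3}$, $e_{ji}\mapsto e_{4i-1,4j-2}$ has the same defect, since $4i-1>4i-2$. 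A correct choice is, for instance, $e_{ji}\mapsto e_{4i-3,4j-2}$ together with your first term, or the paper's parity choice. With such a choice the unit check goes through, and your norm bounds $2t_E$ and $2$ are correct.

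Your worry about a second truncation inside $Q_E$ is unfounded. The two copies occupy disjoint row sets, so the plain compressions $U_1^*yU_2$ and $(U_3^*yU_4)^{\mathsf t}$ already send each unit $J_Ee_{ab}$ back to $e_{ab}$. Hence $Q_EJ_E=I$ does not need the vanishing property of strictly lower $y$, and $Q_E$ can be defined on all of $C_E$.
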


\begin{proof}
Let $u_e e_i=e_{2i}$ and $u_o e_i=e_{2i-1}$. Set
\begin{equation}\label{pg:interlacing-maps}
 J_Ex=L_E(u_e x u_o^*-u_o x^{\mathsf t}u_e^*),\qquad
 Q_Ey=u_e^*yu_o-(u_o^*yu_e)^{\mathsf t}.
\end{equation}
Ideal contractivity and invariance under transpose give the stated bounds.
For a source unit,
\[
 J_Ee_{ij}=
 \begin{cases}
 e_{2i,2j-1},&i\geq j,\\
 -e_{2j-1,2i},&i<j.
 \end{cases}
\]
Substitution into $Q_E$ gives $Q_EJ_Ee_{ij}=e_{ij}$ in both cases, and
density proves $Q_EJ_E=I$.
\end{proof}

\begin{proposition}\label{pg:block-row}
Suppose $t_F<\infty$, and let $(H_i)$ be mutually orthogonal
finite-dimensional Hilbert subspaces. Let $Y$ be the closed linear span
in $C_E$ of the spaces $p_{H_i}C_Ep_{H_i}$.
Write $\mathcal P_i y=p_{H_i}yp_{H_i}$.
Let
\[
 A:X_-(F)\to Y,\qquad R:Y\to C_F,\qquad
 \norm{A}\leq M,\quad \norm{R}\leq r,
\]
and suppose $Ae_{ij}\in\mathcal P_iY$ whenever $i>j$. If, for an infinite
set $I\subset\mathbb N$ and $\delta>0$,
\begin{equation}\label{pg:block-row-returns}
 \abs{(RAe_{ij})_{ij}}\geq\delta\qquad(i>j, i,j\in I),
\end{equation}
then there are $T:C_F\to Y$ and $S:Y\to C_F$ with
\begin{equation}\label{pg:block-row-bounds}
 ST=I,\qquad \norm{T}\leq2t_FM,\qquad
 \norm{S}\leq\frac{4t_Fr}{\delta}.
\end{equation}
In particular $C_F$ embeds complementably into $S_E$.
\end{proposition}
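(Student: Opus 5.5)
The plan is to reduce to the complemented-embedding machinery by composing $A$ with the interlacing map of Lemma~\ref{pg:interlacing} and then normalizing the return coefficients with the sign-averaging of Lemma~\ref{pg:matched-return} and a Schur-multiplier inversion. Since $t_F<\infty$, Lemma~\ref{pg:interlacing} gives $J_F:C_F\to X_-(F)$ and $Q_F:X_-(F)\to C_F$ with $Q_FJ_F=I$, $\norm{J_F}\le2t_F$ and $\norm{Q_F}\le2$. First, however, I restrict to the infinite set $I$. Relabel $I=\{n_1<n_2<\cdots\}$ and let $\iota e_{ij}=e_{n_in_j}$, with contractive extraction $\iota^*$; this preserves strict lower triangularity. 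Replacing $A$ by $A\iota$ (restricted to $X_-(F)$), $R$ by $\iota^*R$, and reindexing the blocks $H_{n_i}$ as $H_i$, we may assume $I=\mathbb N$. The norms do not increase, and $Ae_{ij}\in\mathcal P_iY$ still holds for $i>j$.

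Next I make the return diagonal. The forward units satisfy $Ae_{ij}=p_{H_i}Ae_{ij}p_{H_i}$ for $i>j$, so both left and right supports are indexed by the row index $i$ only. Averaging with source-row signs only (the row version in Lemma~\ref{pg:matched-return}, with $u_i=p_{H_i}$) together with source-column signs applied on the $C_F$ side yields $B_1y=\E_{\epsilon,\tau}d_\epsilon R(U_\epsilon y)d_\tau$. Because the column index $j$ is invisible in the target, the column signs cannot be compensated on the target side. I therefore take the more robust route: work with $x\in X_-(F)$ and average $d_\epsilon\,\cdot\,d_\tau$ applied both before $A$ (on the source) and after $R$. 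For $y=Ax$ the target sign operator $U_\epsilon$ implements $x\mapsto d_\epsilon x$. Setting $\widetilde R=\E_{\epsilon,\tau}\, d_\epsilon d_\tau' \circ R\circ U_\epsilon\circ A\circ(d_\epsilon\,\cdot\,d_\tau)$ as a map on $X_-(F)$ produces the Schur multiplier $M_\lambda=\widetilde R$ with $\lambda_{ij}=(RAe_{ij})_{ij}$, of norm at most $Mr$. The lower coefficients satisfy $|\lambda_{ij}|\ge\delta$.

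To invert this multiplier cheaply, I use the Ramsey stabilization idea of Lemma~\ref{pg:schur-stabilization}, restricted to lower entries. Passing to a further infinite subset makes $\lambda_{ij}=\mu+$ small errors with $|\mu|\ge\delta$, and as there the error is absorbed by a Neumann series, giving an inverse of norm at most $2/\delta$ on the lower part. This is where the constant $4t_Fr/\delta=2\cdot2\cdot t_F r/\delta$ should come from. Define $T=AJ_F$ (after the relabelling), so $\norm{T}\le2t_FM$ and $T$ maps into $Y$, and $S=Q_F\,(M_\lambda|_{X_-})^{-1}\,\Lambda$. Here $\Lambda:Y\to X_-(F)$ is the row-averaged return $y\mapsto\sum_i e_{ii}R(\mathcal P_iy)$ followed by lower truncation, the latter being contractive on the relevant pieces because of the support structure. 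I expect this factor of $t_F$ to be the delicate point; one may need to put the $t_F$ on the $S$ side rather than bound $\Lambda$ contractively. Then $ST=Q_F M_\lambda^{-1}M_\lambda J_F=I$.

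Finally, $Y$ is a block-diagonal space $\overline{\bigoplus_iM_{\dim H_i}}^{C_E}$, isometric to $Z$ with $d_t=\dim H_t$ (omitting zero blocks). Hence $C_F$ embeds complementably into such a $Z$, and in turn into $S_E$: each $M_{d_t}$ sits as a corner of a distinct summand $M_n$ of $S_E$ with $n\ge d_t$ increasing, and the compression onto those corners is a contractive projection by \eqref{eq:orthogonal-projections}. The main obstacle is the construction of $S$. Specifically, the support of $Ae_{ij}$ does not record $j$, so the coordinate extraction of the $(i,j)$ entry must be performed after $R$. The sign averaging must therefore be arranged so that both row and column coordinates are separated without losing more than the stated constants.
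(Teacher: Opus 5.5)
The reduction to $I=\mathbb N$, the choice $T=AJ_F$ with $\norm{T}\le2t_FM$, and the final transfer from $Y$ to $S_E$ are all fine. The construction of $S$, however, has a genuine gap, exactly at the point you flag at the end.

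The identity $ST=Q_FM_\lambda^{-1}M_\lambda J_F$ needs $\Lambda A=M_\lambda$ on $X_-(F)$, and this is false. For $i>j$ you have $\mathcal P_kAe_{ij}=\delta_{ki}Ae_{ij}$, so
\[
 \Lambda Ae_{ij}=L_F\bigl(e_{ii}RAe_{ij}\bigr)=\sum_{k<i}(RAe_{ij})_{ik}\,e_{ik}.
\]
In general this has nonzero entries at $(i,k)$ with $k\neq j$.

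Your $\widetilde R$ is a Schur multiplier only because the column signs $d_\tau$ act on the source before $A$. It is therefore not of the form $S'A$ with $S'$ defined on $Y$, since, as you note, nothing on $Y$ implements $x\mapsto xd_\tau$. What $\Lambda A$ actually is can be described as follows. Put $R_1y=\sum_ie_{ii}R(\mathcal P_iy)$ and $(K_i)_{kj}=(R_1Ae_{ij})_{ik}$ for $j,k<i$. Then $\Lambda A$ is the row-preserving map sending row $i$ of $x$ to $x_{i\cdot}K_i^{\mathsf t}$. Inverting its diagonal part $M_\lambda$, by Ramsey stabilization or otherwise, does not invert this map, and no sign averaging removes the off-diagonal entries of $K_i$.

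Also, $L_F$ here acts on the whole matrix $R_1y$, not row by row, so it costs $t_F$ rather than being contractive. That is where the factor $t_F$ in $\norm{S}$ comes from.

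The missing idea is a compactness and gliding-hump argument for the matrices $K_i$.

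\begin{enumerate}
\item Single-row insertion and extraction give $\norm{K_i}\le rM$. Along a diagonal subsequence, $K_i\to K$ in the weak operator topology, with $\norm{K}\le rM$. The hypothesis on the return coefficients gives $\abs{K_{jj}}\ge\delta$ for every $j$.
\item Each fixed row and column of the bounded operator $K$ tends to zero, and $K_i\to K$ entrywise. So you can select $n_1<n_2<\cdots$ such that the compression $K'$ of $K$ to $(n_s)$ satisfies $\norm{K'-D}<\delta/(4t_F)$, where $D=\diag(K_{n_sn_s})$. You can also arrange that the entries of $K_{n_s}$ on the earlier indices are within summably small errors of those of $K$.
\item Let $B$ be the compressed composite: $AJ_0$, then the row return, then $L_F$, then extraction along $(n_s)$. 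Compare $B$ first with $x\mapsto L_F(x(K')^{\mathsf t})$, and then use $L_F\bigl(x((K')^{\mathsf t}-D)\bigr)$. This shows $B$ is within $\delta/2$ of the right multiplication $x\mapsto xD$ on $X_-(F)$.
\item Hence $\norm{B^{-1}}\le2/\delta$. Set $S=Q_FB^{-1}$ composed with the return, where the return is the row average followed by $L_F$, extraction along $(n_s)$ and compression to the selected blocks. Then $\norm{S}\le2\cdot(2/\delta)\cdot t_Fr$.
\end{enumerate}

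No stabilization to a constant $\mu$ is needed. Any diagonal $D$ with all entries of modulus at least $\delta$ gives an invertible right multiplier.
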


\begin{proof}
Principal insertion and extraction along $I$ in $C_F$, and
compression of $Y$ to the closed span of
$p_{H_i}C_Ep_{H_i}$, $i\in I$, are contractive. Since
$Ae_{ij}\in p_{H_i}C_Ep_{H_i}$ for $i>j$, $i,j\in I$, the
forward range after these restrictions lies in that closed span.
The resulting maps preserve \eqref{pg:block-row-returns}.
Relabel to assume $I=\mathbb N$.
Extend $R$ to $C_E$ by composing it with the projection
$y\mapsto\sum_i p_{H_i}yp_{H_i}$ onto $Y$, and apply the row average
in Lemma~\ref{pg:matched-return}. Its restriction to $Y$ is
\[
 R_1y=\sum_i e_{ii}R(\mathcal P_i y),\qquad \norm{R_1}\leq r.
\]
It preserves \eqref{pg:block-row-returns} and sends
$p_{H_i}C_Ep_{H_i}$ into $e_{ii}C_F$.
Thus $L_FR_1$ has norm at most $t_Fr$ and sends
$p_{H_i}C_Ep_{H_i}$ into
$\operatorname{span}\{e_{ij}:j<i\}$.

Define $K_i\in\mathcal B(\ell_2)$ by
\[
 (K_i)_{kj}=(R_1Ae_{ij})_{ik}\quad(j,k<i),
 \qquad (K_i)_{kj}=0\quad\text{otherwise}.
\]
Single-row insertion and extraction give $\norm{K_i}\leq rM$. A diagonal
subsequence makes all matrix entries converge. Their limit defines a
sesquilinear form bounded by $rM$ on finite vectors, hence an operator
$K\in\mathcal B(\ell_2)$ with $\norm{K}\leq rM$. The uniform bound extends
the convergence to the weak operator topology. Denote its index set by $I_0$.
For every fixed $j$,
\begin{equation}\label{pg:block-row-limit-diagonal}
 d_j=K_{jj},\qquad \abs{d_j}\geq\delta.
\end{equation}

Choose positive sequences $\alpha_s,\beta_s$ with
\[
 \sum_s\alpha_s<\frac{\delta}{4t_F},\qquad
 \sum_s\beta_s<\frac{\delta}{4}.
\]
Select increasing $n_s\in I_0$ so that
\[
 \sum_{h<s}(\abs{K_{n_s n_h}}+\abs{K_{n_h n_s}})<\alpha_s
\]
and the sum of the Euclidean norms of the columns of
$((K_{n_s}-K)_{n_h n_j})_{h,j<s}$ is less than $\beta_s$.
The first condition is available because each fixed row and column of a
bounded Hilbert-space operator tends to zero at infinity. The second involves
only finitely many entries and follows from the weak operator convergence
along $I_0$. Thus both hold sufficiently far along $I_0$.

Let $K'$ be the compression of $K$ to $(n_s)$, reindexed on $\ell_2$, and put
$D=\diag(d_{n_s})$. Its off-diagonal entries are an absolutely summable series
of scalar matrix units, so
\begin{equation}\label{pg:block-row-first-error}
 \norm{K'-D}\leq\sum_s\alpha_s<\frac{\delta}{4t_F},\qquad
 \norm{D^{-1}}\leq\delta^{-1}.
\end{equation}
Let $J_0,C_0$ be principal source insertion and extraction along $(n_s)$,
and put
\[
 Y'=\overline{\sum_{s\geq1}p_{H_{n_s}}C_Ep_{H_{n_s}}}^{\,C_E}.
\]
Let $\Phi:Y'\to Y$ be inclusion and let
$\Psi y=\sum_s p_{H_{n_s}}yp_{H_{n_s}}$.
The range of $AJ_0$ lies in $Y'$. Define
\[
 A'=\Psi AJ_0,\qquad R'=C_0L_FR_1\Phi,\qquad B=R'A'.
\]
Then $\norm{A'}\leq M$, $\norm{R'}\leq t_Fr$, and row $s$ of $B$ has precisely the
matrix $\bigl((K_{n_s})_{n_k n_j}\bigr)_{k,j<s}$.

The comparison operator
\[
 B_0x=L_F\bigl(x(K')^{\mathsf t}\bigr),\qquad x\in X_-(F),
\]
is bounded by $t_F\norm{K'}$, since transposition preserves the norm of
bounded Hilbert-space operators. Its $(s,k)$ coefficient, $k<s$, is
$\sum_{j<s}K'_{kj}x_{sj}$, which verifies the transpose orientation.
On each unit $e_{sj}$ the difference $B-B_0$ occupies a single source row.
The second selection condition therefore gives
\[
 \sum_{j<s}\norm{(B-B_0)e_{sj}}_{C_F}<\beta_s.
\]
Every coordinate functional has norm at most one. Summing the associated
coefficient rank-one operators thus converges in operator norm and proves
\begin{equation}\label{pg:block-row-second-error}
 \norm{B-B_0}\leq\sum_s\beta_s<\delta/4.
\end{equation}

Right multiplication $B_Dx=xD$ is an automorphism of $X_-(F)$ with inverse
norm at most $\delta^{-1}$. Equations
\eqref{pg:block-row-first-error}--\eqref{pg:block-row-second-error} give
\[
 \norm{B-B_D}\leq\norm{B-B_0}+t_F\norm{K'-D}<\delta/2.
\]
The Neumann series on the closed space $X_-(F)$ therefore gives
$\norm{B^{-1}}\leq2/\delta$ and
$B^{-1}R'A'=I_{X_-(F)}$. With the maps in Lemma~\ref{pg:interlacing}, set
\[
 T=\Phi A'J_F,\qquad S=Q_FB^{-1}R'\Psi.
\]
Then $ST=I$, with the bounds \eqref{pg:block-row-bounds}.

Finally, choose strictly increasing integers $m_i$ with
$m_i\geq\dim H_i$. Insert $p_{H_i}C_Ep_{H_i}$ as a matrix corner of the
summand $M_{m_i}(\K)$ in $S_E$, and use reverse compression to those
corners. These maps preserve or decrease the
concatenated singular-value norm and have composite identity on $Y$.
They extend by finite-block density, proving the last assertion.
\end{proof}

\begin{proposition}\label{pg:lower-matched}
Suppose $t_E,t_F<\infty$ and put $Y=C_E(H_0\otimes\ell_2)$. Let
$A:X_-(F)\to Y$ and $R:Y\to C_F$ be bounded by $M$ and $r$, respectively.
Suppose
\begin{equation}\label{pg:lower-matched-form}
 Ae_{ij}=b_{ij}\otimes e_{ij},\qquad
 b_{ij}=p_{\mathcal L_i}b_{ij}p_{\mathcal D_j}\quad(i>j),
\end{equation}
where all $\mathcal L_i,\mathcal D_j\subset H_0$ are finite dimensional.
If $\abs{(RAe_{ij})_{ij}}\geq\delta>0$ on all lower pairs from one infinite
principal set, there are a map $V:C_F\to Y$ satisfying
$Ve_{ij}=v_{ij}\otimes e_{ij}$, where
$\operatorname{ran}v_{ij}$ lies in a finite-dimensional subspace of
$H_0$ depending only on $i$, and $\operatorname{ran}v_{ij}^*$ lies in
a finite-dimensional subspace of $H_0$ depending only on $j$, and a bounded map
$S:Y\to C_F$ such that
\begin{equation}\label{pg:lower-matched-bounds}
 SV=I,\qquad \norm{V}\leq4t_FM,\qquad
 \norm{S}\leq\frac{16t_Et_Fr}{\delta}.
\end{equation}
If instead $R:Y\to X_-(F)$ already satisfies $RA=I$, the bound for
$\norm{S}$ can be replaced by $8t_Er$. In either case,
Theorem~\ref{pg:matched-dichotomy} gives canonical $E=F$ or $F=\ell_2$.
\end{proposition}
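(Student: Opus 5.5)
Our plan mirrors Proposition~\ref{pg:block-row}, transplanted to the tensor picture $Y=C_E(H_0\otimes\ell_2)$, and ends with an application of Theorem~\ref{pg:matched-dichotomy}. The steps, in order, are as follows.

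\textbf{Step 1 (normalise $R$).} We restrict to the infinite principal set and relabel it as $\mathbb N$. This uses principal insertion and extraction in $C_F$ and the compression $z\otimes e_{ij}\mapsto z\otimes e_{n_in_j}$ in $Y$. We then apply Lemma~\ref{pg:matched-return} with $u_i=I\otimes e_{ii}$ and $v_j=I\otimes e_{jj}$. This replaces $R$ by a map $B$ with $\norm{B}\leq r$ satisfying
\[
 B(z\otimes e_{ij})=\beta_{ij}(z)e_{ij}.
\]
Hence $BA$ is the lower-triangular Schur multiplier $M_\lambda$ on $X_-(F)$, with $\abs{\lambda_{ij}}\geq\delta$. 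Composing with $L_F$ costs a factor $t_F$.

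\textbf{Step 2 (invert on the lower triangle).} Here there is no off-diagonal coupling of the kind handled by the $K$-matrix in Proposition~\ref{pg:block-row}, since $A$ is already diagonal in the outer indices. The task is only to invert $M_\lambda$ on $X_-(F)$. For this we use the Ramsey stabilisation from the proof of Lemma~\ref{pg:schur-stabilization}. We pass to a further subsequence on which $\lambda_{n_in_j}$, $i>j$, lies within summable errors $2\eta_j$ of a limit $\mu$ with $\abs{\mu}\geq\delta$. The error is then a sum of column multipliers of total norm at most $\delta/2$, and this is where the extra factor $t_F$ enters, via the bound $t_F\cdot$(error). A Neumann series gives $\norm{(CM_\lambda J)^{-1}}\leq 2/\delta$ on $X_-(F)$.

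\textbf{Step 3 (pass from $X_-(F)$ to $C_F$).} We use the interlacing maps of Lemma~\ref{pg:interlacing}, namely $J_F:C_F\to X_-(F)$ with $\norm{J_F}\leq 2t_F$ and $Q_F$ with $\norm{Q_F}\leq2$. Set $V=AJJ_F$ and $S=Q_F(CM_\lambda J)^{-1}CL_FB$. A rough count gives $\norm{V}\leq 2t_FM$ and $\norm{S}\leq 2\cdot t_F\cdot(2/\delta)\cdot r$. The stated constants $4t_FM$ and $16t_Et_Fr/\delta$ leave room for one more factor $2t_E$. That factor comes from realising the output $V e_{ij}$ again in the form $v_{ij}\otimes e_{ij}$ with the outer indices undoubled.

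\textbf{Step 4 (restore the tensor form).} This step is the main obstacle. The unit $e_{ij}$ is sent by $J_F$ to $e_{2i,2j-1}$ or $-e_{2j-1,2i}$, so $V e_{ij}=\pm b_{\cdot\cdot}\otimes e_{\cdot\cdot}$ carries relabelled outer indices. We undo the relabelling with a Hilbert isometry. This is harmless for the row index $2i$ and column index $2j-1$ in the lower case. In the upper case, however, $b_{2j-1,2i}$ must be transposed back, which requires $L_E$ and the transpose on the target. This accounts for the $t_E$ and the extra factor $2$ (hence $4t_FM$). We then check the support claim. For $i\geq j$ the range of $v_{ij}$ lies in $\mathcal L_{2i}$ and that of $v_{ij}^*$ in $\mathcal D_{2j-1}$. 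For $i<j$, after transposition, the range lies in $\overline{\mathcal D_{2i-1}}$ and the co-range in $\overline{\mathcal L_{2j}}$. So the row subspace $\mathcal L_{2i}+\overline{\mathcal D_{2i-1}}$ depends only on $i$, and likewise for columns. If the left and right Hilbert spaces must be kept separate, we embed the two pieces into orthogonal halves of $H_0$.

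\textbf{Second case and conclusion.} When $R$ already maps into $X_-(F)$ with $RA=I$, Step 2 is unnecessary. The bound becomes $\norm{S}\leq 2\cdot 2t_E\cdot 2r=8t_Er$. In either case the hypotheses of Theorem~\ref{pg:matched-dichotomy} hold, since $SV=I$ and $V$ has the required matched finite-dimensional supports. That theorem then yields canonical $E=F$ or $F=\ell_2$.
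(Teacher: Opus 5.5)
Your Steps 1--3 follow the paper's route: normalise $R$ by Lemma~\ref{pg:matched-return}, Ramsey-stabilise the lower coefficients near $\mu$, invert on $X_-(F)$ by a Neumann series, and interlace with $J_F,Q_F$. Your intermediate pair $V=AJJ_F$, $S=Q_F(CM_\lambda J)^{-1}CL_FB$ does satisfy $SV=I$. One small correction: each column-multiplier error $x\mapsto d_jxe_{jj}$ has norm at most $\norm{d_j}$ by the ideal property, so it costs no factor $t_F$; the $t_F$ in $S$ comes only from $L_F$.

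\textbf{The gap is in Step~4.} You flag it as the main obstacle but do not carry it out, and the sketch misplaces $t_E$.

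First, no Hilbert isometry undoes the relabelling, since lower units land at $(2i,2j-1)$ and upper ones at $(2j-1,2i)$. The refolding map is
\[
 \mathcal Ky=U_e^*yU_o-(U_o^*yU_e)^{\mathsf t},\qquad
 U_e=I_{H_0}\otimes u_e,\quad U_o=I_{H_0}\otimes u_o .
\]
It has norm at most $2$ and involves no $L_E$; this is the factor $2$ in $4t_FM$.

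Second, $\mathcal K$ is not injective, so replacing $V$ by $\mathcal KV$ requires a new left inverse. You never construct one or check $SV=I$ for the refolded map, and $t_E$ belongs exactly there, on the return side. The natural unfolding $z\mapsto U_ezU_o^*-U_oz^{\mathsf t}U_e^*$ reproduces $Ve_{ij}$ together with a spurious transposed copy in the opposite parity region. For instance, when $i\geq j$ it adds $-b^{\mathsf t}\otimes e_{2j-1,2i}$, and this copy must be annihilated.

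The paper does this with the outer block truncation $L_{\mathrm{out}}$, which keeps $q_iyq_j$ exactly when $i>j$. One has $\norm{L_{\mathrm{out}}}\leq2t_E$: order the basis by outer and then inner label, apply ordinary strict triangular truncation, and delete the outer diagonal blocks. Then $\mathcal Jz=L_{\mathrm{out}}(U_ezU_o^*-U_oz^{\mathsf t}U_e^*)$ has $\norm{\mathcal J}\leq4t_E$, and $\mathcal J\mathcal K$ is the identity on operators supported in the two parity regions. Hence $S=Q_FR\mathcal J$ inverts $\mathcal KAJ_F$ with $\norm{S}\leq8t_E\norm{R}$, which yields both stated bounds.

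In your first case, $(CM_\lambda J)^{-1}CL_FB$ (after reindexing along $(n_s)$) happens to vanish on every $z\otimes e_{kl}$ with $k\leq l$. That case could therefore be closed without $L_{\mathrm{out}}$. For a general $R:Y\to X_-(F)$ in the second case, however, some such device is indispensable, and the proposal supplies none.

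\textbf{Support indices.} Your indices for $i<j$ are off. There $v_{ij}=b_{2j-1,2i}^{\mathsf t}$ has range in $\overline{\mathcal D_{2i}}$ and co-range in $\overline{\mathcal L_{2j-1}}$. The admissible spaces are therefore $\mathcal L_{2i}+\overline{\mathcal D_{2i}}$ and $\mathcal D_{2j-1}+\overline{\mathcal L_{2j-1}}$. Your $\mathcal L_{2i}+\overline{\mathcal D_{2i-1}}$ need not contain these ranges. No orthogonal splitting of $H_0$ is needed, since Theorem~\ref{pg:matched-dichotomy} allows unrelated finite-dimensional families.
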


\begin{proof}
Write $q_i=I_{H_0}\otimes e_{ii}$. Apply Lemma~\ref{pg:matched-return}
to $AL_F$ and $R$, with outer support projections $q_i$. It gives
$B:Y\to C_F$ satisfying
\[
 \norm{B}\leq r,\qquad (By)_{ij}=(R(q_i yq_j))_{ij}.
\]
Hence
\[
 BAe_{ij}=\lambda_{ij}e_{ij},\qquad
 \lambda_{ij}=(RAe_{ij})_{ij},\quad i>j.
\]
On the given principal set the coefficients have modulus at least $\delta$
and are bounded by $Mr$.

Choose positive $\eta_j$ with $\sum_j\eta_j<\delta/2$.
Successive finite-color infinite Ramsey selections give increasing indices
$n_j$ and $\mu\in\K$, $\abs{\mu}\geq\delta$, such that
\begin{equation}\label{pg:lower-matched-stabilization}
 \abs{\lambda_{n_i n_j}-\mu}\leq\eta_j\qquad(i>j).
\end{equation}
Use the nested selection in Lemma~\ref{pg:schur-stabilization}, now for
the lower coefficients alone: at stage $j$ make the diameter of their
value set at most $\eta_j$. The closures of these value sets are nested
compact sets, whose common point $\mu$ satisfies
\eqref{pg:lower-matched-stabilization} and $\abs{\mu}\geq\delta$.

Let $J_I(e_{ij})=e_{n_i n_j}$ and let $C_I$ be its reverse
coordinate extraction on $C_F$. Let
$\Phi(z\otimes e_{ij})=z\otimes e_{n_i n_j}$ and let $\Psi$ be
its reverse compression on $Y$.
Set $A_I=\Psi AJ_I$ and $B_I=C_IB\Phi$. By
\eqref{pg:lower-matched-form}, $AJ_I$ is supported in the selected
corner with outer row and column indices in $\{n_i:i\geq1\}$.
Thus $K=B_IA_I$ acts on $X_-(F)$
by the lower coefficients $\lambda_{n_i n_j}$.
For each column $j$ the error in \eqref{pg:lower-matched-stabilization} is a
map $x\mapsto d_jxe_{jj}$ with $\norm{d_j}\leq\eta_j$. The sum of these maps
converges in operator norm, whence
\[
 \norm{K-\mu I}\leq\sum_j\eta_j<\delta/2,\qquad
 \norm{K^{-1}}\leq2/\delta.
\]
This is a Neumann inverse on the complete space $X_-(F)$.
Consequently
\[
 R_I=K^{-1}L_FB_I:Y\to X_-(F),\qquad
 R_IA_I=I,\qquad \norm{R_I}\leq2t_Fr/\delta.
\]
The inverse commutes with source row and column coordinate projections,
since $K$ does. Consequently,
$R_I(q_iYq_j)\subseteq\K e_{ij}$ for $i>j$.
The coefficient of $A_I(e_{ij})$ has range in $\mathcal L_{n_i}$
and adjoint range in $\mathcal D_{n_j}$, so these support spaces
remain finite dimensional.

Now consider any pair $A:X_-(F)\to Y$,
$R:Y\to X_-(F)$ satisfying $RA=I$ and
\eqref{pg:lower-matched-form}. On $Y$ let $L_{\mathrm{out}}$ retain
$q_i yq_j$ precisely when $i>j$. Then
\begin{equation}\label{pg:outer-triangle-bound}
 \norm{L_{\mathrm{out}}}\leq2t_E.
\end{equation}
Indeed, for a finite coordinate matrix order the finite Hilbert basis first
by its outer label and then by its inner label. Ordinary strict triangular
truncation in that ordering has norm at most $t_E$. Removing its outer
diagonal blocks, by $I-\mathcal D_{\mathrm{out}}$ where
$\mathcal D_{\mathrm{out}}$ is the contractive projection
$y\mapsto\sum_i q_iyq_i$, gives exactly
$L_{\mathrm{out}}$. The bound is independent of the numbers of
inner and outer basis indices, so the consistent finite
formulas extend by norm density and prove \eqref{pg:outer-triangle-bound}.

Let $U_e=I_{H_0}\otimes u_e$ and $U_o=I_{H_0}\otimes u_o$, with $u_e,u_o$
as in \eqref{pg:interlacing-maps}, and define
\[
 \mathcal Ky=U_e^*yU_o-(U_o^*yU_e)^{\mathsf t},\qquad
 \mathcal Jz=L_{\mathrm{out}}
       (U_e zU_o^*-U_o z^{\mathsf t}U_e^*).
\]
Their norms are at most $2$ and $4t_E$. The same unit calculation as in
Lemma~\ref{pg:interlacing}, now allowing arbitrary inner coefficients,
gives $\mathcal K\mathcal J=I_Y$. Moreover,
$\mathcal J\mathcal K$ is the identity on matrices supported in
\begin{equation}\label{pg:parity-regions}
 (2i,2j-1), i\geq j,\qquad
 (2j-1,2i), i<j.
\end{equation}
For the second region, $b\otimes e_{2j-1,2i}$ is folded to
$-b^{\mathsf t}\otimes e_{ij}$ and unfolded back to itself. The two
transposes and signs cancel. The identities extend from finite matrices by
the bounds just proved.

Set
\[
 V=\mathcal KAJ_F,\qquad S=Q_FR\mathcal J.
\]
Equation~\eqref{pg:lower-matched-form} puts $AJ_F$ in
\eqref{pg:parity-regions}, so
\[
 SV=Q_FR\mathcal J\mathcal KAJ_F=Q_FRAJ_F=Q_FJ_F=I.
\]
The bounds are $\norm{V}\leq4t_FM$ and $\norm{S}\leq8t_Er$.
On a unit, the coefficient of $Ve_{ij}$ is
\[
 Ve_{ij}=
 \begin{cases}
 b_{2i,2j-1}\otimes e_{ij},&i\geq j,\\
 b_{2j-1,2i}^{\mathsf t}\otimes e_{ij},&i<j.
 \end{cases}
\]
The new inner left and right support spaces may be taken as
\[
 \mathcal L_{2i}+\overline{\mathcal D_{2i}},\qquad
 \mathcal D_{2j-1}+\overline{\mathcal L_{2j-1}},
\]
where the bar denotes coordinate conjugation.
Indeed $(p_Lbp_D)^{\mathsf t}=p_{\overline D}b^{\mathsf t}p_{\overline L}$.
The first space is finite dimensional and depends only on $i$;
the second is finite dimensional and depends only on $j$.
Applying this construction to $A_I,R_I$ proves
\eqref{pg:lower-matched-bounds}, and Theorem~\ref{pg:matched-dichotomy}
applies to $V,S$.
\end{proof}

\begin{proposition}\label{pg:one-sided}
Let $t_F<\infty$, suppose that $F$ is not canonically $\ell_2$, and let
$P_i$ be mutually orthogonal finite-rank projections on a separable Hilbert
space $H$. Suppose
\[
 A:X_-(F)\to C_E(H),\qquad R:C_E(H)\to C_F,\qquad
 \norm{A}\leq M,\quad\norm{R}\leq r,
\]
with
\begin{equation}\label{pg:one-sided-hypotheses}
 Ae_{ij}=P_iAe_{ij}\quad(i>j),\qquad
 \abs{(RAe_{ij})_{ij}}\geq\delta>0\quad(i>j, i,j\in I)
\end{equation}
for an infinite principal set $I$. Then one of the following holds:
\begin{enumerate}
\item There are maps $A_m:X_-(F)\to C_E(H_0\otimes\ell_2)$ and
$R_m:C_E(H_0\otimes\ell_2)\to C_F$, bounded by $M$ and $r$, respectively,
such that $A_m$ satisfies \eqref{pg:lower-matched-form}, with
finite-dimensional supports, and $\abs{(R_mA_me_{ij})_{ij}}$ has one fixed
positive lower bound for all $i>j$.
\item There are $T:C_F\to S_E$ and $S:S_E\to C_F$ such that
\[
 ST=I,\qquad \norm{T}\leq2t_FM,\qquad
 \norm{S}\leq8t_Fr/\delta.
\]
\end{enumerate}
If also $t_E<\infty$, the first alternative implies canonical $E=F$ by
Proposition~\ref{pg:lower-matched}. The same alternatives hold if the first
condition in \eqref{pg:one-sided-hypotheses} is replaced by
$Ae_{ij}=(Ae_{ij})P_i$.
\end{proposition}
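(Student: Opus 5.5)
The plan is to split according to how the lower-triangular images $Ae_{ij}=P_iAe_{ij}$ distribute their right supports relative to the family $(P_k)$, using the Hilbert-head decay of Lemma~\ref{pg:hilbert-head} to kill the contributions with a fixed finite right support. First I restrict to the principal set $I$ and relabel $I=\mathbb N$. Then I apply the row version of Lemma~\ref{pg:matched-return} to replace $R$ by $R_1y=\sum_i e_{ii}R(P_iy)$, which has norm at most $r$, preserves the returns $(RAe_{ij})_{ij}$ and sends $P_iC_E(H)$ into row $i$. Write $P=\sum_kP_k$. I decompose each $Ae_{ij}$, $i>j$, as $P_iAe_{ij}P_i+\sum_{k\ne i}P_iAe_{ij}P_k+P_iAe_{ij}(I-P)$. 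These pieces come from bounded maps, namely contractive compressions averaged over signs as in Lemma~\ref{pg:coarse-components}: the diagonal-block map $\mathcal P_{\rm diag}(A)e_{ij}=P_iAe_{ij}P_i$, the $k=j$ part, and the remainder $\mathcal P_0$. Norms stay bounded by $M$, $M$ and $3M$. The return identity then splits into three summands, and a Ramsey pair-colouring as in Lemma~\ref{pg:coarse-components} gives an infinite set on which one component has return at least $\delta/3$.

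\emph{Diagonal-block component.} Here the forward map lands in the closed span $Y$ of $P_iC_E(H)P_i$, and Proposition~\ref{pg:block-row} applies with $H_i=\operatorname{ran}P_i$. It produces $T,S$ with $ST=I$ through $S_E$, with $\norm{T}\le2t_FM$ and $\norm{S}\le 4t_Fr/(\delta/2)$ after adjusting the threshold. This is alternative (2), with the constant $8t_Fr/\delta$ coming from keeping threshold $\delta/2$ throughout.

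\emph{$k=j$ component.} Here $Ae_{ij}\in P_iC_E(H)P_j$, so the supports are matched by the finite-dimensional spaces $\operatorname{ran}P_i$ on the left and $\operatorname{ran}P_j$ on the right. As in the proof of Theorem~\ref{pg:unbounded-uniqueness}, I choose Hilbert isometries placing $\operatorname{ran}P_i$ inside $H_0\otimes e_i$. Conjugating $A$ and $R$ by these isometries gives $A_m,R_m$ of the form \eqref{pg:lower-matched-form} with the same norm bounds. Stabilizing the returns by the Ramsey selection of Proposition~\ref{pg:lower-matched} yields the fixed lower bound, which is alternative (1).

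\emph{Remaining component.} This is the main obstacle: the right supports are spread over $P_k$ with $k\ne i,j$, or over $I-P$. I would try to show that its returns cannot stay bounded below unless the mass concentrates on finitely many right directions. First I cut the right support by a finite-rank projection $p$, a partial sum of the $P_k$ together with a finite part of $I-P$. By Lemma~\ref{pg:hilbert-head}, $x\mapsto R_1(A(x)p)$ factors through Hilbert space, so its returns tend to $0$ along $i,j\to\infty$ because $F$ is not $\ell_2$. Passing to a sparse subsequence $(n_s)$ with finite-rank cutoffs $p_s$ chosen by a gliding-hump argument, I can arrange that the return of $A e_{n_in_j}$ is essentially carried by the right band $(p_{i}-p_{i-1})$ attached to the larger index $i$. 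Then $Ae_{ij}\approx P_{n_i}Ae_{ij}h_i$ with orthogonal $h_i$ depending only on $i$, and $\sum_iP_{n_i}(\cdot)h_i$ is a contraction by \eqref{eq:orthogonal-projections}. This puts the component back into the diagonal-block situation with $H_i=\operatorname{ran}P_{n_i}+\operatorname{ran}h_i$, so Proposition~\ref{pg:block-row} again gives alternative (2). The hardest step is controlling the perturbation errors in this gliding hump uniformly, for which I would copy the $\alpha_s,\beta_s$ bookkeeping of Proposition~\ref{pg:block-row}.

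The variant hypothesis $Ae_{ij}=(Ae_{ij})P_i$ follows by transposing the target and precomposing $R$ with the transpose, as in Theorem~\ref{pg:unbounded-uniqueness}. The final sentence of the statement is Proposition~\ref{pg:lower-matched} applied to alternative (1).
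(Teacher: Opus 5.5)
Your first two components are fine up to constants, but the step you call the main obstacle fails for a structural reason, not a technical one. Lemma~\ref{pg:hilbert-head} makes the returns $(R_1((Ae_{ij})p))_{ij}$ against a fixed finite-rank cutoff $p$ small only when \emph{both} $i,j\to\infty$. For a fixed column $j$ they can stay large as $i\to\infty$. In that case the return of $Ae_{ij}$ is carried by a finite right band depending on the smaller index $j$. No orthogonal family $(h_i)$ indexed by the row can carry it, since $\norm{Dh_i}\to0$ for every fixed finite-rank projection $D$.

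Here is a concrete case. Take $E=F=\ell_p$ with $1<p<\infty$, $p\ne2$, and $H=\ell_2\oplus\ell_2$. Let $P_i$ project onto $e_i\oplus0$, let $Ax$ place $x$ in the corner mapping the second summand into the first, and let $R$ compress back. Every $Ae_{ij}$ lies in your remaining component and every return equals $1$. If your argument were valid it would output alternative (2), a complemented copy of $C_p$ in $S_{\ell_p}$, contradicting Theorem~\ref{pg:comp-block}. So the remaining component cannot always be sent to Proposition~\ref{pg:block-row}. Here alternative (1) holds, and your argument never produces it from that component.

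The missing idea is the dichotomy in the paper's proof, which also makes your preliminary three-way split unnecessary. Take finite-rank $Q_m\uparrow I$ and set $\lambda^{(m)}_{ij}=(R((Ae_{ij})Q_m))_{ij}$. Pass to a diagonal set $\mathcal S$ on which the column profiles $\nu_j^m=\lim_{n\in\mathcal S}\lambda^{(m)}_{nj}$ exist.

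\emph{If $\limsup_j\sup_m\abs{\nu_j^m}>0$:} choose columns $j_s$ and cutoffs $m_s$ so that the bands $D_s=Q_{m_s}-Q_{m_{s-1}}$, attached to the \emph{column} index, carry a return bounded below. With $J$ the principal insertion along $(j_s)$, the sign-averaged map $x\mapsto\sum_sA(J(xe_{ss}))D_s$ has left support $P_{j_i}$ and right support $D_s$. This gives alternative (1).

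\emph{If $\sup_m\abs{\nu_j^m}\to0$:} only now does your row-band gliding hump work. This hypothesis keeps the earlier columns small against every later cutoff $Q_{m_{i-1}}$, which Lemma~\ref{pg:hilbert-head} cannot supply.

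Two smaller points. First, the three-way Ramsey split only gives threshold $\delta/3$, hence $\norm{S}\le12t_Fr/\delta$ rather than the stated $8t_Fr/\delta$. Second, the spaces $\operatorname{ran}P_{n_i}+\operatorname{ran}h_i$ need not be mutually orthogonal, so Proposition~\ref{pg:block-row} does not apply to those $H_i$ directly. You first have to move $P_{n_i}H$ and $h_iH$ into $\K^{b_i}$ by separate left and right isometries, as the paper does with $M_{b_i}(\K)$.
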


\begin{proof}
Restrict $A$ to the closed span of $\{e_{ij}:i>j,\ i,j\in I\}$
and compress its values on the left by the strong sum $\sum_{i\in I}P_i$.
Compose $R$ with principal coordinate extraction along $I$. These
operations do not increase $\norm{A}$ or $\norm{R}$ and preserve
\eqref{pg:one-sided-hypotheses}.
Relabel so that $I=\mathbb N$.
By the row-average construction in Lemma~\ref{pg:matched-return}, replace
$R$ by the map
\begin{equation}\label{pg:one-sided-row-return}
 R_{\mathrm{row}}y=\sum_i e_{ii}R(P_i y).
\end{equation}
The series defines $R_{\mathrm{row}}$ on $(\sum_iP_i)C_E(H)$;
precomposition with $y\mapsto(\sum_iP_i)y$, where the projection sum
converges strongly, defines it on $C_E(H)$. Its norm is at most $r$,
and it preserves \eqref{pg:one-sided-hypotheses}. Denote it again by $R$.

Choose increasing finite-rank projections $Q_m\to I_H$ strongly,
with $Q_0=0$, and set, for $i>j$,
\[
 \lambda_{ij}=(RAe_{ij})_{ij},\qquad
 \lambda_{ij}^{(m)}=(R((Ae_{ij})Q_m))_{ij}.
\]
These scalars have modulus at most $rM$. For each fixed $m$,
\begin{equation}\label{pg:one-sided-fixed-head}
 \lim_{N\to\infty}\sup_{i>j>N}\abs{\lambda_{ij}^{(m)}}=0.
\end{equation}
To apply Lemma~\ref{pg:hilbert-head}, extend $A$ to $C_F$ as $AL_F$.
The operator $x\mapsto R((AL_Fx)Q_m)$ has a Hilbert factorization
with product at most
\[
 (\operatorname{rank}Q_m)t_FMr.
\]
Equation
\eqref{pg:hilbert-head-cofinal} therefore gives
\eqref{pg:one-sided-fixed-head}.

A countable diagonal selection gives an infinite set $\mathcal S$ on
which $\lambda_{nj}^{(m)}$ converges for every fixed pair $(j,m)$. Write
\begin{equation}\label{pg:one-sided-profiles}
 \nu_j^m=\lim_{\substack{n\in\mathcal S\\n\to\infty}}
                 \lambda_{nj}^{(m)},\qquad
 \nu_j^0=0,\qquad h_j=\sup_{m\geq0}\abs{\nu_j^m}.
\end{equation}
Only $n>j$ is relevant to each limit. For every fixed $m$,
\eqref{pg:one-sided-fixed-head} gives $\nu_j^m\to0$ as $j\to\infty$.
We distinguish two cases according to $\limsup_{j\in\mathcal S}h_j$.

Suppose first that this limsup is positive. Choose $a>0$ and an infinite
$\mathcal S_0\subset\mathcal S$ with $h_j>4a$ for $j\in\mathcal S_0$.
Recursively select increasing $j_l\in\mathcal S_0$ and $m_l$, with $m_0=0$.
When choosing $j_l$, impose
\[
 \abs{\nu_{j_l}^m}<a\quad(0\leq m\leq m_{l-1})
\]
and, for each $s<l$,
\[
 \abs{\lambda_{j_lj_s}^{(m_s)}-\nu_{j_s}^{m_s}}<a/2,\qquad
 \abs{\lambda_{j_lj_s}^{(m_{s-1})}-\nu_{j_s}^{m_{s-1}}}<a/2.
\]
For these finitely many fixed projection indices and earlier columns,
both requirements hold eventually in $\mathcal S_0$. Then choose $m_l$ with
$\abs{\nu_{j_l}^{m_l}}>4a$. Necessarily $m_l>m_{l-1}$ by the first inequality.
For $D_s=Q_{m_s}-Q_{m_{s-1}}$ this construction gives, for every $i>s$,
\begin{equation}\label{pg:one-sided-band-returns}
 \abs{(R((Ae_{j_i j_s})D_s))_{j_i j_s}}>2a.
\end{equation}
Indeed $\abs{\nu_{j_s}^{m_s}-\nu_{j_s}^{m_{s-1}}}>3a$, and
the approximation errors for $\lambda_{j_i j_s}^{(m_s)}$ and
$\lambda_{j_i j_s}^{(m_{s-1})}$ total less than $a$.

Let $J:X_-(F)\to X_-(F)$ be the principal insertion
$Je_{is}=e_{j_i j_s}$ for $i>s$. Define on finite lower matrices
\[
 A_bx=\sum_s A(J(xe_{ss}))D_s.
\]
For input in the first $N$ columns, this is the average of
$A(J(xd_\tau))$ multiplied on the right by the isometry with sign $\tau_s$
on $D_s$, $s\leq N$, and sign one on the complement. Expansion retains
precisely the terms with equal column and projection indices. Thus the
consistent finite formulas extend with norm at most $M$. Moreover
\[
 A_be_{is}=P_{j_i}A_be_{is}D_s\quad(i>s).
\]
Let $P$ and $D$ be the strong sums of $(P_{j_i})$ and $(D_s)$. Separate
Hilbert isometries $J_L:PH\to H_0\otimes\ell_2$ and
$J_R:DH\to H_0\otimes\ell_2$ satisfy
$J_L(P_{j_i}H)\subset H_0\otimes\operatorname{span}\{e_i\}$ and
$J_R(D_sH)\subset H_0\otimes\operatorname{span}\{e_s\}$.
Extend $J_L$ and $J_R$ by zero on $P^\perp H$ and $D^\perp H$,
respectively. Let $C:C_F\to C_F$ be the extraction
$(Cz)_{is}=z_{j_i j_s}$, and define
\[
 A_mx=J_LA_bxJ_R^*,\qquad
 R_my=C R(J_L^*yJ_R).
\]
The norms of $A_m$ and $R_m$ are at most $M$ and $r$, respectively.
The finite inner support spaces of $A_me_{is}$ in
\eqref{pg:lower-matched-form} depend only on $i$ on the left and only
on $s$ on the right. The maps $A_m,R_m$ preserve
\eqref{pg:one-sided-band-returns}, proving the first alternative.

Suppose now that $h_j\to0$ along $\mathcal S$. Choose increasing
$j_i\in\mathcal S$ and $m_i$, beginning with $m_0=0$. First choose $j_i$ so
that
\begin{equation}\label{pg:one-sided-small-profile}
 h_{j_i}\leq\delta/8,\qquad
 \abs{\lambda_{j_i j_s}^{(m_{i-1})}-\nu_{j_s}^{m_{i-1}}}
       \leq\delta/8\quad(s<i).
\end{equation}
The projection index $m_{i-1}$ and the earlier columns are fixed before
this choice;
the conditions are consequently compatible convergence requirements along
$\mathcal S$. The previously selected numbers $h_{j_s}$ also satisfy
$h_{j_s}\leq\delta/8$, so
\begin{equation}\label{pg:one-sided-old-head}
 \abs{\lambda_{j_i j_s}^{(m_{i-1})}}\leq\delta/4\quad(s<i).
\end{equation}
After fixing $j_i$, choose $m_i>m_{i-1}$ with
\begin{equation}\label{pg:one-sided-tail}
 \norm{(Ae_{j_i j_s})(I-Q_{m_i})}_{C_E}\leq\delta/(4r)
       \quad(s<i).
\end{equation}
Here $r>0$. Since $Q_m\to I$ strongly, $zQ_m\to z$ in ideal norm by
finite-rank approximation. Applying this convergence to
$Ae_{j_i j_s}$, $s<i$, proves that \eqref{pg:one-sided-tail} can be imposed.

For the sequence $(j_i)$ selected in this case, let
$Je_{is}=e_{j_i j_s}$ for $i>s$ and $(Cz)_{is}=z_{j_i j_s}$.
Set $D_i=Q_{m_i}-Q_{m_{i-1}}$ and define
\[
 A_dx=\sum_i P_{j_i}A(Jx)D_i.
\]
Since $(P_{j_i})_i$ and $(D_i)_i$ are separately pairwise orthogonal,
\eqref{eq:orthogonal-projections}, applied to $A(Jx)$, gives convergence
in ideal norm and $\norm{A_d}\leq M$.
For $i>s$,
\[
 A_de_{is}=(Ae_{j_i j_s})D_i\in P_{j_i}C_E(H)D_i,
\]
and \eqref{pg:one-sided-old-head}--\eqref{pg:one-sided-tail} give
\[
 \abs{(R((Ae_{j_i j_s})D_i))_{j_i j_s}}
       \geq\delta-\delta/4-\delta/4=\delta/2.
\]
The bound \eqref{pg:one-sided-old-head} uses $h_{j_s}\leq\delta/8$,
which controls every projection index, including those chosen after $j_s$.

The families $(P_{j_i}H)_i$ and $(D_iH)_i$ consist of mutually
orthogonal finite-dimensional spaces. Choose positive integers
$b_i\geq\max(\operatorname{rank}P_{j_i},\operatorname{rank}D_i)$ and put
\[
 Y=\overline{\bigoplus_{i\geq1}M_{b_i}(\K)}^{\,C_E}.
\]
Separate Hilbert isometries embed $P_{j_i}H$ and $D_iH$ into $\K^{b_i}$.
Their direct sums preserve singular values on the closed span of
$\{P_{j_i}C_E(H)D_i:i\geq1\}$. Reverse compression from each
$M_{b_i}(\K)$ to the images of $P_{j_i}H$ and $D_iH$, followed by the
inverse Hilbert isometries, defines a contraction $Y\to C_E(H)$. Compose $A_d$ with the insertion into $Y$, and $R$ with
this contraction and the extraction $C$. We obtain maps
$X_-(F)\to Y\to C_F$ of norms at most $M,r$; the value at $e_{is}$,
$i>s$, lies in $M_{b_i}(\K)$, and \eqref{pg:block-row-returns} holds with
$\delta/2$ in place of $\delta$. Proposition~\ref{pg:block-row} gives
the second alternative with the stated bounds.

If $t_E<\infty$, Proposition~\ref{pg:lower-matched} applies to the first
alternative. Its Hilbert possibility is excluded by the assumption on $F$,
so it gives canonical $E=F$.
Finally, for a right-supported component put
$A^{\mathsf t}x=(Ax)^{\mathsf t}$ and
$R^{\mathsf t}y=R(y^{\mathsf t})$. Then
$R^{\mathsf t}A^{\mathsf t}=RA$, and the left supports of
$A^{\mathsf t}e_{ij}$ are $P_i^{\mathsf t}$, still mutually orthogonal and
finite dimensional. The source indices and the coefficients in
\eqref{pg:one-sided-hypotheses} are unchanged, proving the last assertion.
\end{proof}

\begin{proposition}\label{pg:bounded-alternative}
Suppose \(t_E,t_F<\infty\), and let
\(T:C_F\to C_E\) and \(S:C_E\to C_F\) be bounded maps with \(ST=I\).
Then at least one of the following holds:
\[
 E=F,\qquad F=\ell_2,\qquad
 C_F\text{ embeds complementably into }S_E.
\]
\end{proposition}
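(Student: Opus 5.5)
We will assume throughout that $E\ne F$ and $F\ne\ell_2$, and show that $C_F$ then embeds complementably into $S_E$. The argument reduces to the lower-triangular pieces and feeds them into the preceding propositions.

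\emph{Step 1: reproduction.} Apply Lemma~\ref{pg:full-reproduction} with $Y=C_E$ and $H_N=P_N$. This replaces $T,S$ by $V,R$ with $RV=I$ and $(P_{N_k}-P_{N_{k-1}})V=V\Pi_k$.

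\emph{Step 2: choose a component.} Lemma~\ref{pg:coarse-components} then splits $V(I-\mathcal D_F)$ into the components $C_{ab}$. It produces an infinite set $I$ and one component among $C_{11},C_{12},C_{21},C_{10},C_{01}$ with $\abs{(RC_{ab}e_{ij})_{ij}}\ge1/5$ for $i>j$ in $I$. Since $t_F<\infty$, we may restrict each of these maps to $X_-(F)$, because on lower pairs this restriction loses nothing.

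\emph{Step 3: case analysis by component type.}

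\textbf{Case $C_{12}$ or $C_{21}$.} Here the values lie in $q_iC_Eq_j$, after transposing the target in the $C_{21}$ case as in the proof of Theorem~\ref{pg:unbounded-uniqueness}. We use Hilbert isometries placing $q_i\ell_2$ in $H_0\otimes e_i$ and $q_j\ell_2$ in $H_0\otimes e_j$. This yields a map of the form \eqref{pg:lower-matched-form} with finite-dimensional supports. Proposition~\ref{pg:lower-matched} then gives $E=F$ or $F=\ell_2$.

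\textbf{Case $C_{11}$.} Now $C_{11}e_{ij}\in q_iC_Eq_i$ for $i>j$. This is exactly the hypothesis of Proposition~\ref{pg:block-row} with $H_i=\mathcal H_i$, so $C_F$ embeds complementably into $S_E$.

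\textbf{Case $C_{10}$ or $C_{01}$.} Here $C_{10}e_{ij}=q_iC_{10}e_{ij}$ for $i>j$, so this component is one-sided with $P_i=q_i$ finite rank. For $C_{01}$ the support sits on the right, which Proposition~\ref{pg:one-sided} also covers. Since $F\ne\ell_2$, Proposition~\ref{pg:one-sided} applies. Its first alternative, together with $t_E<\infty$ and Proposition~\ref{pg:lower-matched}, gives $E=F$. Its second alternative is a complemented embedding into $S_E$.

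\emph{Main obstacle.} The delicate point is the bookkeeping in Step 3: checking that each restricted component really satisfies the exact support and return hypotheses of the proposition it is fed to. For $C_{10}$ we must verify that the left support is $q_i$ with $i$ the row index, so that the $P_i$ are mutually orthogonal across rows. For $C_{12}$ and $C_{21}$ we must confirm that the inner support spaces depend only on $i$ (left) and only on $j$ (right). Once these checks are made, the three alternatives follow.
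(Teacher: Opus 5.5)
Your proposal is correct and follows essentially the same route as the paper. It runs reproduction via Lemma~\ref{pg:full-reproduction} with $H_N=P_N$ (choose $\eta\norm{S}<1$ so that the left inverse exists), then the five-component selection of Lemma~\ref{pg:coarse-components}, and then Propositions~\ref{pg:lower-matched}, \ref{pg:block-row} and \ref{pg:one-sided} for $C_{12}/C_{21}$, $C_{11}$ and $C_{10}/C_{01}$ respectively. The differences are cosmetic: you spell out the isometric transfer of $q_i\ell_2$ into $H_0\otimes e_i$ that puts $C_{12}$ into the form \eqref{pg:lower-matched-form}, which the paper leaves implicit, and for $C_{01}$ you invoke the right-support clause of Proposition~\ref{pg:one-sided} rather than transposing explicitly, which amounts to the same argument.
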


\begin{proof}
Assume $F\ne\ell_2$. Apply Lemma~\ref{pg:full-reproduction} with
$Y=C_E$, $H_N=P_N$, and then Lemma~\ref{pg:coarse-components}.
For an infinite $I\subset\mathbb N$, one of
$C_{11},C_{12},C_{21},C_{10},C_{01}$ satisfies
$\abs{(RC_{ab}(e_{ij}))_{ij}}\geq1/5$ whenever $i>j$ and $i,j\in I$.
Restrict to matrix coordinates with both indices in $I$, and compose
$R$ with principal coordinate extraction along $I$.

For $i>j$, $C_{12}(e_{ij})\in q_iC_Eq_j$, where $q_i$ are the
finite-rank projections from Lemma~\ref{pg:coarse-components}.
Proposition~\ref{pg:lower-matched} gives $E=F$ or $F=\ell_2$, and
the latter is excluded. Applying $y\mapsto y^{\mathsf t}$ to
$C_{21}(e_{ij})$ gives left support $q_i^{\mathsf t}$ and right
support $q_j^{\mathsf t}$, so the same proposition applies.

For $i>j$, $C_{11}(e_{ij})\in q_iC_Eq_i$. Proposition~\ref{pg:block-row} therefore gives
$C_F\hookrightarrow S_E$ complementably.

For $i>j$, $C_{10}(e_{ij})=q_iC_{10}(e_{ij})$.
Proposition~\ref{pg:one-sided} gives either $E=F$ or a complemented
embedding $C_F\to S_E$. For $C_{01}$, apply the same proposition to
$e_{ij}\mapsto C_{01}(e_{ij})^{\mathsf t}$, whose left support is
$q_i^{\mathsf t}$, and to the return map $y\mapsto R(y^{\mathsf t})$.
The indices $i,j$ and the coefficients in \eqref{pg:return-sum} are preserved.
\end{proof}

\begin{proof}[Proof of Theorem~\ref{pg:uniqueness}]
Let \(U:C_F\to C_E\) be an isomorphism onto \(C_E\).
Lemma~\ref{pg:triangle-invariance}, applied to \(U\) and its
inverse, shows that \(t_E<\infty\) if and only if
\(t_F<\infty\). If these norms are infinite,
Theorem~\ref{pg:unbounded-uniqueness} gives \(E=F\).

Suppose that they are finite. Apply
Proposition~\ref{pg:bounded-alternative} with \(T=U\) and
\(S=U^{-1}\). The alternative that \(C_F\) embeds
complementably into \(S_E\) forces
\(F=\ell_2\) by Theorem~\ref{pg:comp-block}. Thus either
\(E=F\) or \(F=\ell_2\).

In the latter case, $C_E\simeq C_2$, so $E=\ell_2=F$;
see \cite[proof of Corollary 3.4]{Arazy}. Indeed, restrict a Hilbert-space
isomorphism $C_E\to H$ to the diagonal copy of $E$ and denote the restriction by $B$.
Symmetry and averaging over independent signs give, for $a\in c_{00}$,
\[
 \norm{a}_E^2\asymp
 \E\norm{\sum_i\varepsilon_i a_iBe_i}_H^2
 =\sum_i\abs{a_i}^2\norm{Be_i}_H^2\asymp\norm{a}_2^2,
\]
where the constants depend only on $B$ and its inverse on its range.
\end{proof}

\section{On Arazy's embedding problem}\label{sec:ordinary-embeddings}

\subsection{A positive answer under finite cotype}\label{sec:older-comparisons}

A Banach lattice of finite cotype is \(q\)-concave for some finite
\(q\ge2\), not necessarily equal to its cotype exponent
\cite[Theorems 4.4(b) and 4.6(iv)]{Maligranda}. Hence, for some
\(M_q<\infty\),
\begin{equation}\label{eq:concavity}
 \left(\sum_\nu\norm{u_\nu}_E^q\right)^{1/q}
 \le M_q\norm{\left(\sum_\nu\abs{u_\nu}^q\right)^{1/q}}_E .
\end{equation}
For a finite probability distribution, apply this inequality to the
appropriately weighted vectors \(u_\nu\).

We also use the quasi-Banach Kahane--Khintchine inequality in its
\(L_q/L_1\) form. The \(r\)-Banach formulation and the
Aoki--Rolewicz renorming are stated in
\cite[Theorem 2.16 and Section 2.1, arXiv:1804.08947v2]{CCV}.
We obtain a dimension-independent constant by applying these results
to the underlying real space of a single quasi-Banach space
containing all finite matrix spaces.

\begin{lemma}\label{lem:quasi}
For every finite \(q\ge1\), there is \(K_q<\infty\) such that, for every
\(d\ge1\), \(1\le j\le d\), and finite family \(a_i\in M_d(\K)\),
\[
 \begin{gathered}
 \left(\E Q_j\left(\sum_i r_i a_i\right)^q\right)^{1/q}
 \le K_q\,\E Q_j\left(\sum_i r_i a_i\right),\\
 Q_j(a)=\max_{1\le l\le j}(l/j)^2s_l(a).
 \end{gathered}
\]
Each Rademacher family consists of independent signs. The constant
is independent of \(d,j\) and the number of summands.
\end{lemma}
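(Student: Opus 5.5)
The plan is to realize $Q_j$ as a quasi-norm on one fixed quasi-Banach space and then apply the quasi-Banach Kahane--Khintchine inequality \cite[Theorem 2.16]{CCV} once, with a constant depending only on $q$ and on the quasi-norm constant of $Q_j$.

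\emph{Step 1: $Q_j$ is a quasi-norm with a uniform constant.} Homogeneity $Q_j(ta)=\abs{t}Q_j(a)$ is clear, and $Q_j(a)=0$ forces $s_1(a)=0$, i.e.\ $a=0$. For the quasi-triangle inequality I will use the Ky Fan inequality $s_{l+m-1}(a+b)\le s_l(a)+s_m(b)$. Fix $1\le l\le j$ and let $m=\lceil l/2\rceil$, so that $2m-1\le l$. Then
\[
 (l/j)^2s_l(a+b)\le(l/j)^2\bigl(s_m(a)+s_m(b)\bigr).
\]
Since $l\le 2m$, we have $(l/j)^2\le4(m/j)^2$, and hence
\[
 Q_j(a+b)\le4\bigl(Q_j(a)+Q_j(b)\bigr).
\]
The constant $4$ does not depend on $d$ or $j$. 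The weights $(l/j)^2$ were chosen so that this halving of the index costs only a fixed factor; with unweighted $s_j$ the argument would fail.

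\emph{Step 2: a single ambient space.} To make the constant independent of $d$ and $j$, I embed everything into one space. Let $\mathcal X$ be the $\ell_\infty$-type sum over all pairs $(d,j)$ with $1\le j\le d$ of the spaces $(M_d(\K),Q_j)$, equipped with the quasi-norm $\sup_{(d,j)}Q_j(\cdot)$. This quasi-norm still has constant $4$. Each $(M_d(\K),Q_j)$ embeds isometrically in $\mathcal X$ as the coordinate at $(d,j)$, with zeros elsewhere. I then pass to the underlying real space, since Rademacher averages are real-linear.

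\emph{Step 3: applying the inequality.} By the Aoki--Rolewicz theorem (\cite[Section 2.1]{CCV}), $\mathcal X$ admits an equivalent $r$-norm, with $r$ and the equivalence constants depending only on the quasi-norm constant $4$. \cite[Theorem 2.16]{CCV} then gives the $L_q/L_1$ Kahane--Khintchine inequality for Rademacher sums in $\mathcal X$. Its constant depends only on $q$ and $r$ and is independent of the number of summands. Restricting to the isometric copy of $M_d(\K)$ yields the displayed estimate with this constant $K_q$, uniform in $d$ and $j$.

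The main obstacle is Step 1: the quasi-triangle constant must be dimension-free, and I expect that checking this via the Ky Fan index shift is the only point that needs care. The remaining steps are a direct application of the cited results.
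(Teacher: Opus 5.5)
Your proposal is correct and follows the paper's proof essentially step for step. It uses the same index-halving bound $s_l(a+b)\le s_{\lceil l/2\rceil}(a)+s_{\lceil l/2\rceil}(b)$ to get the uniform quasi-triangle constant $4$, places all $(M_d(\K),Q_j)$ isometrically in one ambient quasi-Banach space, passes to the underlying real space, and applies Aoki--Rolewicz together with the quasi-Banach Kahane--Khintchine inequality. The only difference is that you take the $\ell_\infty$-type sum, while the paper takes the separable $c_0$-type completion; this is immaterial, since finite Rademacher sums take only finitely many values.
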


\begin{proof}
The function \(Q_j\) is homogeneous and positive definite, and
\(j^{-2}\norm{a}\le Q_j(a)\le\norm{a}\).
For \(1\le l\le j\), put \(h=\lceil l/2\rceil\). The singular-value
inequality and monotonicity give
\[
 s_l(a+b)\le s_h(a)+s_h(b),
 \qquad
 (l/j)^2s_l(a+b)\le4\bigl(Q_j(a)+Q_j(b)\bigr).
\]
The singular-value inequality bounds \(s_{2h-1}(a+b)\), and
\(2h-1\le l\). Thus the quasi-triangle constant is at most \(4\)
for every \(d,j\).

Let \(X\) be the completion, in the supremum quasi-norm,
of the algebraic direct sum of the spaces \((M_d(\K),Q_j)\),
where \(d\ge1\) and \(1\le j\le d\). Equivalently, \(X\) consists
of the families whose coordinate quasi-norms tend to zero. This is a
separable quasi-Banach space with quasi-triangle constant at most \(4\).
Completeness follows by taking the limit in each finite-dimensional
coordinate of a Cauchy family, then using the uniform Cauchy bound;
the vanishing-tail condition is preserved by approximation.
Each \((M_d,Q_j)\) is a linear isometric subspace of \(X\). Applying
Aoki--Rolewicz renorming and the quasi-Banach Kahane--Khintchine
inequality to \(X\), then returning to the original quasi-norm, gives
a constant \(K_q\) valid on every coordinate.
\end{proof}

For a fixed finite partition of coordinates into blocks of lengths
\(d_1,\ldots,d_N\), define the positive linear operator
\[
 (D_b^{(t)}u)_{k,j}=u_{k,\lceil j/t\rceil},
 \quad 1\le j\le d_k,\qquad t\in\N.
\]
For each \(k\), \(D_b^{(t)}\) repeats the coordinates
\(u_{k,1},\ldots,u_{k,d_k}\) \(t\) times and retains the first
\(d_k\) entries of the resulting list. For every symmetric Banach
sequence norm,
\begin{equation}\label{eq:dilation}
 \norm{D_b^{(t)}u}_E\le t\norm{u}_E.
\end{equation}
Repeating every coordinate of \(u\) \(t\) times without truncation
gives a sum of \(t\) disjoint copies of \(u\), each of the same
\(E\)-norm. The truncated block vector is a coordinate restriction
of a permutation of this vector.
Consequently the positive operator
\begin{equation}\label{eq:Lb}
 L_b=\sum_{h=0}^{\infty}2^{-2h}D_b^{(2^{h+1})}
 \quad\hbox{satisfies}\quad
 \norm{L_b}_{E\to E}\le
 \sum_{h=0}^{\infty}2^{-2h}2^{h+1}=4.
\end{equation}
The series converges in operator norm, so no Fatou assumption is needed.
For a decreasing nonnegative vector \(u_{k,1}\ge\cdots\ge u_{k,d_k}\),
\begin{equation}\label{eq:majorant}
 \max_{1\le l\le j}(l/j)^2u_{k,l}
 \le (L_bu)_{k,j}.
\end{equation}
For each \(l\le j\), choose \(h\ge0\) with
\(j/2^{h+1}<l\le j/2^h\). The term on the left is bounded by
\(2^{-2h}u_{k,\lceil j/2^{h+1}\rceil}\), one of the terms on the right.

\begin{lemma}\label{lem:block}
Suppose \(E\) satisfies \eqref{eq:concavity}. For every finite family of
mutually orthogonal finite-rank projections \(q_k\), and every array
\(x_{ik}=q_kx_{ik}q_k\), put
\[
 Y=\sum_i r_i\sum_k x_{ik},
 \qquad Z=\sum_{i,k}r_{ik}x_{ik}.
\]
There exists \(D_E<\infty\), independent of all dimensions and of the
array, such that
\[
 D_E^{-1}(\E\norm{Z}_{C_E}^2)^{1/2}
 \le(\E\norm{Y}_{C_E}^2)^{1/2}
 \le D_E(\E\norm{Z}_{C_E}^2)^{1/2}.
\]
One may take \(D_E=4M_qK_q\).
\end{lemma}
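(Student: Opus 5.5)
The key observation is that $Y$ and $Z$ are block diagonal with respect to the orthogonal family $(q_k)$. Their $k$th blocks are
\[
 Y_k=\sum_i r_ix_{ik},\qquad Z_k=\sum_i r_{ik}x_{ik},
\]
and for each fixed $k$ these have the same distribution. Only the joint dependence across different $k$ differs between $Y$ and $Z$. I would therefore compare both $(\E\norm{Y}_{C_E}^2)^{1/2}$ and $(\E\norm{Z}_{C_E}^2)^{1/2}$ with one deterministic vector. That vector depends only on the marginal laws of the blocks:
\[
 w_{k,j}=\E Q_j(Y_k)=\E Q_j(Z_k)\qquad(1\le j\le d_k),
\]
where $d_k=\operatorname{rank}q_k$ and each block is regarded as an element of $M_{d_k}(\K)$. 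For a block-diagonal operator $X$, let $\sigma(X)$ be the finitely supported vector whose $k$th block is $(s_j(X_k))_{j\le d_k}$. Then $\norm{X}_{C_E}=\norm{\sigma(X)}_E$, because the singular values of $X$ are a rearrangement of the concatenated block singular values and $E$ is symmetric. The goal is two-sided bounds, with the same constants for $Y$ and $Z$:
\[
 \tfrac14\norm{w}_E\le(\E\norm{X}_{C_E}^2)^{1/2}\le M_qK_q\norm{w}_E,
 \qquad X\in\{Y,Z\}.
\]
Chaining these gives the lemma with $D_E=4M_qK_q$.

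\emph{Lower bound.} Each block vector $\sigma(X)_k$ is decreasing, so \eqref{eq:majorant} gives the pointwise estimate $Q_j(X_k)\le(L_b\sigma(X))_{k,j}$. Take expectations and use positivity and linearity of $L_b$ (there are finitely many sign patterns). This gives $0\le w\le L_b\,\E\sigma(X)$ coordinatewise. Solidity of $E$ and \eqref{eq:Lb} then yield $\norm{w}_E\le4\norm{\E\sigma(X)}_E$. The triangle inequality in $E$, a finite average, and Cauchy--Schwarz give
\[
 \norm{\E\sigma(X)}_E\le\E\norm{\sigma(X)}_E\le(\E\norm{X}_{C_E}^2)^{1/2}.
\]

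\emph{Upper bound.} We may take $q\ge2$, so $(\E\norm{X}^2)^{1/2}\le(\E\norm{X}^q)^{1/q}$. Apply the $q$-concavity \eqref{eq:concavity} to the finitely many weighted vectors $u_\nu=\Pr(\nu)^{1/q}\sigma(X(\nu))$, one for each sign pattern $\nu$. This gives
\[
 (\E\norm{\sigma(X)}_E^q)^{1/q}\le M_q\norm{(\E\sigma(X)^q)^{1/q}}_E.
\]
Coordinatewise we have $s_j(X_k)\le Q_j(X_k)$, since $l=j$ is one of the terms in the maximum. Lemma~\ref{lem:quasi} then gives
\[
 (\E s_j(X_k)^q)^{1/q}\le(\E Q_j(X_k)^q)^{1/q}\le K_q\E Q_j(X_k)=K_qw_{k,j}.
\]
Solidity finishes this direction. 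Note that Lemma~\ref{lem:quasi} is applied only to one block and one Rademacher family at a time: $(r_i)_i$ for $Y_k$, and $(r_{ik})_i$ for $Z_k$. This is exactly why the differing joint dependence across blocks never enters.

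\emph{Main obstacle.} The delicate step is the upper bound. A plain moment comparison of $s_j$ fails, because $s_j$ is not a quasi-norm with controlled constant. The substitute $Q_j$ is a quasi-norm with constant $4$, uniformly in $d$ and $j$, so Lemma~\ref{lem:quasi} applies to it. The resulting loss is then undone by the averaging operator $L_b$ in the lower bound; the constant $4$ in $D_E$ comes from $\norm{L_b}\le4$. The remaining care is bookkeeping: sorting within blocks, so that \eqref{eq:majorant} applies to decreasing block vectors, and the identification of $\norm{X}_{C_E}$ with the $E$-norm of the block-concatenated vector, which relies on symmetry of $E$.
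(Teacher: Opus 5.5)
Your proof is correct and uses the same ingredients as the paper's proof: equality of the block distributions, Lemma~\ref{lem:quasi} together with $s_j\le Q_j$, the majorant \eqref{eq:majorant} with $\norm{L_b}\le4$, $q$-concavity, and the triangle inequality for the finite average, arriving at the same constant $4M_qK_q$. The only difference is bookkeeping: the paper compares both sums with $v=\E u(W)$ and uses the factor $4$ from $L_b$ in the upper bound, whereas you compare them with $w_{k,j}=\E Q_j(X_k)$ and use that factor in the lower bound.
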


\begin{proof}
Discard any zero projections and put \(d_k=\operatorname{rank}q_k\).
Let \(W\) denote either \(Y\) or \(Z\), put \(W_k=q_kWq_k\), and
let \(u(W)=(u_{k,j}(W))_{k,1\le j\le d_k}\), where
\(u_{k,j}(W)=s_j(W_k)\). Thus \(u(W)\) concatenates the decreasing
singular-value lists of \(q_kWq_k\), and
\(\norm{W}_{C_E}=\norm{u(W)}_E\). The vector
\(v=\E u(W)\) is independent of the choice \(W=Y\) or \(W=Z\),
because \(q_kYq_k\) and \(q_kZq_k\) have equal distributions for every \(k\).

For each coordinate \((k,j)\), Lemma~\ref{lem:quasi} and
\eqref{eq:majorant} give
\[
 \bigl(\E u_{k,j}(W)^q\bigr)^{1/q}
 \le K_q\,\E Q_j(W_k)
 \le K_q(L_bv)_{k,j}.
\]
Since \(q\ge2\), \(q\)-concavity now yields
\begin{align*}
 (\E\norm{W}_{C_E}^2)^{1/2}
 &\le(\E\norm{u(W)}_E^q)^{1/q}\\
 &\le M_q\norm{(\E\abs{u(W)}^q)^{1/q}}_E\\
 &\le M_qK_q\norm{L_bv}_E
 \le4M_qK_q\norm{v}_E .
\end{align*}
Conversely, the triangle inequality for the finite expectation gives
\(\norm{v}_E\le\E\norm{u(W)}_E\le(\E\norm{W}_{C_E}^2)^{1/2}\).
These bounds apply to both random sums.
\end{proof}

Let \(T:C_F\to S_E\) be an isomorphic embedding. Normalize
\(\norm{e_1}_F=1\), and choose \(a,b>0\) with
\[
 a\norm{x}_{C_F}\le\norm{Tx}_{C_E}\le b\norm{x}_{C_F}.
\]
For $y=(y_k)_{k\ge1}\in S_E$, put
$H_Ny=(y_1,\ldots,y_N,0,\ldots)$ and $H_0=0$.
These finite-rank projections are nested, and
\eqref{eq:orthogonal-projections} gives $\norm{H_N}\leq1$ and $H_Ny\to y$
in $S_E$. Fix $0<\eta<a$ and apply Lemma~\ref{pg:full-reproduction}
with $Y=S_E$. Let $V$ and $0=N_0<N_1<\cdots$ be the resulting embedding
and integers, and set $Q_j=H_{N_j}-H_{N_{j-1}}$, so that $Q_jV=V\Pi_j$.
For $m,k\ge1$, define
\begin{equation}\label{eq:model-columns}
 U_k:\K^m\to S_E,\qquad
 U_k(v)=V\left(\sum_{i=1}^m v_i e_{i,m+k}\right).
\end{equation}
The input in \eqref{eq:model-columns} belongs to $\Pi_{m+k}C_F$,
so $U_k(\K^m)\subset Q_{m+k}S_E$. For fixed $m$, these ranges are
supported on both sides by the mutually orthogonal finite-rank
Hilbert projections $\sum_{t=N_{m+k-1}+1}^{N_{m+k}}q_t$.
For $v_1,\ldots,v_N\in\K^m$, the matrix
$\sum_{k=1}^N\sum_{i=1}^m(v_k)_i e_{i,m+k}$ has the same nonzero
singular values as $[v_1\ \cdots\ v_N]$.
Thus Lemma~\ref{pg:full-reproduction} gives, with
$A_0=a-\eta$ and $B_0=2b+\eta$,
\begin{equation}\label{eq:model}
 A_0\norm{[v_1\ \cdots\ v_N]}_{C_F}
 \le\norm{\sum_{k=1}^N U_kv_k}_{C_E}
 \le B_0\norm{[v_1\ \cdots\ v_N]}_{C_F}.
\end{equation}
The maps \(U_k:\K^m\to S_E\) depend on \(m\), but
\(A_0,B_0\) do not depend on \(m,N\).

\begin{proof}[Proof of Theorem~\ref{thm:main}]
Choose \(q,M_q\) as in \eqref{eq:concavity}. Fix \(N\) and
\(\alpha=(\alpha_1,\ldots,\alpha_N)\in\K^N\). For each \(m\), use the
maps in \eqref{eq:model-columns} and set
\(x_{ik}=\alpha_kU_k(e_i)\).
For \(Y=\sum_i r_i\sum_k\alpha_kU_k(e_i)\), the matrix in
\eqref{eq:model} is \([r_i\alpha_k]_{i\le m,k\le N}\). It has rank at most
one, and its only possible nonzero singular value is \(\sqrt m\norm{\alpha}_2\).
For \(Z=\sum_{i,k}r_{ik}\alpha_kU_k(e_i)\), the matrix in
\eqref{eq:model} is \(G_m\diag(\alpha)\), where
\(G_m=(r_{ik})_{i\le m,k\le N}\).
Thus \eqref{eq:model} and Lemma~\ref{lem:block} imply
\[
 A_0\sqrt m\norm{\alpha}_2\le D_EB_0 L_m(\alpha),\qquad
 A_0 L_m(\alpha)\le D_EB_0\sqrt m\norm{\alpha}_2,
\]
where
\[
 L_m(\alpha)=\bigl(\E\norm{G_m\diag(\alpha)}_{C_F}^2\bigr)^{1/2}.
\]
Couple \(G_m\) as the first \(m\) rows of one infinite Bernoulli
matrix. With \(N\) fixed, the strong law for the finitely many entries
of the Gram matrix gives \(m^{-1}G_m^*G_m\to I_N\) almost surely.
Hence the singular-value list of \(m^{-1/2}G_m\diag(\alpha)\),
padded to length \(N\), converges almost surely to \((\abs{\alpha_k})^*\).
The finite-dimensional restriction of the \(F\)-norm is continuous.
For all \(m\) and all outcomes,
\[
 m^{-1/2}\norm{G_m\diag(\alpha)}_{C_F}
 \le m^{-1/2}\norm{G_m\diag(\alpha)}_{C_1}
 \le\sqrt N\norm{\alpha}_2.
\]
The first bound follows from \(\norm{e_1}_F=1\); the second follows from
rank at most \(N\) and Hilbert--Schmidt norm exactly
\(\sqrt m\norm{\alpha}_2\). Dominated convergence of the squares proves
\[
 \lim_{m\to\infty}m^{-1/2}L_m(\alpha)=\norm{\alpha}_F.
\]
Therefore, for every finite sequence \(\alpha\),
\[
 \frac{A_0}{D_EB_0}\norm{\alpha}_2
 \le\norm{\alpha}_F
 \le\frac{D_EB_0}{A_0}\norm{\alpha}_2 .
\]
The constants are independent of \(N\). Since finite sequences are
dense in \(F\), the identity on \(c_{00}\) extends to an isomorphism
\(F\to\ell_2\). Continuity of the coordinate functionals shows that
this extension fixes every coordinate. Hence \(F=\ell_2\) as sets,
with equivalent norms.
With \(\eta=a/2\), the equivalence constant is at most
\(D_E(4b/a+1)\).
\end{proof}

Before the fixed-\(N\) Gram matrix limit, all expectations are on
finite probability spaces. Neither complementability nor the Fatou
property is used.

Under additional convexity assumptions, Lemma~\ref{lem:block} follows
from known Khintchine inequalities. Suppose that the
symmetric sequence norm is \(2\)-convex and
\(q\)-concave for some finite \(q\), allowing equivalent symmetric
renorming. The convexity and concavity constants can both be made one,
as recalled immediately before Theorem 1.1 in \cite{LPX}.
Set \(a_i=\sum_k x_{ik}\), with the notation of
Lemma~\ref{lem:block}. Two-sided disjointness gives
\[
 \sum_i a_i^*a_i=\sum_{i,k}x_{ik}^*x_{ik},\qquad
 \sum_i a_ia_i^*=\sum_{i,k}x_{ik}x_{ik}^*.
\]
The maximum row--column formula of \cite[Theorem 1.3(ii)]{LPX}
compares both \( (\E\norm{Y}_{C_E}^2)^{1/2}\) and
\((\E\norm{Z}_{C_E}^2)^{1/2}\) with
\[
 \max\left\{
 \norm{(\sum_{i,k}x_{ik}^*x_{ik})^{1/2}}_{C_E},
 \norm{(\sum_{i,k}x_{ik}x_{ik}^*)^{1/2}}_{C_E}
 \right\},
\]
proving Lemma~\ref{lem:block} under the stated \(2\)-convexity and
finite \(q\)-concavity assumptions. Equivalent
symmetric norms give equivalent ideal norms, so the comparison
transfers to the original norm with dimension-independent constants.

More generally, suppose a symmetric function space \(G\) on
\((0,\infty)\) has the product/Bochner comparison
\begin{equation}\label{eq:older-product}
 \norm{\sum_i r_i\otimes a_i}_{G(L_\infty(\Omega)
                  \mathbin{\overline\otimes}\mathcal A)}
 \asymp_G \E\norm{\sum_i r_i a_i}_{G(\mathcal A)},
\end{equation}
uniformly in finite families, and its norm on integer-step
singular-value functions agrees, up to fixed constants, with the
sequence norm of \(E\). Here \(\mathcal A\) is the direct sum of
matrix algebras, equipped with the sum \(\tau\) of their ordinary
traces. With \(d_W(t)=(\mathbb P\otimes\tau)
(\mathbf1_{(t,\infty)}(\abs{W}))\), blockwise marginal equality gives
\[
 d_Y(t)=\sum_k\E\operatorname{Tr}_{q_k}
  \mathbf1_{(t,\infty)}\left(\abs{\sum_i r_i x_{ik}}\right)
 =d_Z(t)\qquad(t>0).
\]
Thus \(Y\) and \(Z\) have equal product symmetric norms. Applying
\eqref{eq:older-product} to \((a_i)_i\) and \((x_{ik})_{i,k}\),
and then the Banach-space Kahane--Khintchine inequality, proves
the block comparison for \(E\). A fixed ambient algebra containing all
finite blocks gives constants independent of the array and its
dimensions.

The comparison \eqref{eq:older-product} is considered in
\cite[Section 4]{LMS}. Corollary 4.3 there covers the
\(2\)-concave and the \(2\)-convex, finite-concavity function-space
cases; the paragraph following it attributes the reflexive Orlicz
case to \cite{DS}. For an Orlicz function \(M\), the passage to
the sequence norm is exact, since
\[
 \tau\bigl(M(\abs{a}/\lambda)\bigr)
   =\sum_j M(s_j(a)/\lambda)
\]
for finite matrices. These results give Lemma~\ref{lem:block} in the indicated ranges.
The proof of Lemma~\ref{lem:block} uses finite cotype and does not require
\eqref{eq:older-product}.

\subsection{Examples and counterexamples}
\label{sec:ordinary-limits}\label{sec:comparison}

The embedding theorems of Lindenstrauss and Szankowski give a
counterexample to \cite[Question 1.1]{HSS} for unrestricted separable
symmetric parameters.

\begin{theorem}
\label{thm:counterexample}
Let \(F\) be a separable reflexive symmetric Banach sequence space
with non-trivial Boyd indices. Then there exists a separable reflexive
symmetric Banach sequence space \(E\) such that
\[
 C_F\hookrightarrow E\hookrightarrow S_E.
\]
\end{theorem}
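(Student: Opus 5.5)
The plan has three steps. First, use the Boyd indices to give $C_F$ an unconditional structure. Second, embed $C_F$ into a reflexive space with an unconditional basis and then into a reflexive space $E$ with a symmetric basis. Third, observe that any symmetric sequence space $E$ embeds isometrically into $S_E$.

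The third step needs no outside input, so we begin with it. Send $a\in E$ to $\diag(a_1 e_{11}^{(1)},a_2e_{11}^{(2)},\ldots)$, where $e_{11}^{(n)}$ is the first matrix unit of the summand $M_n(\K)$. The singular values of this block-diagonal operator are exactly $(\abs{a_n})$ rearranged decreasingly, so $\norm{\cdot}_{C_E}=\norm{a}_E$. Hence $E\hookrightarrow S_E$ isometrically, for every separable symmetric $E$. It therefore remains only to produce a separable reflexive symmetric $E$ with $C_F\hookrightarrow E$.

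For the first step, the hypothesis $1<p_F\le q_F<\infty$ gives, by \cite[Corollary 4.6]{Arazy78}, an unconditional finite-dimensional decomposition of $C_F$; the finite-dimensional pieces are the shells $\Pi_nC_F$, possibly after blocking. Reflexivity of $F$ gives reflexivity of $C_F$. Nontrivial Boyd indices also give $C_F$ nontrivial type and finite cotype, although reflexivity is what we need below.

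For the second step, a reflexive space with an unconditional FDD embeds into a reflexive space with an unconditional basis. This is Lindenstrauss's theorem \cite[Theorem 1.g.5]{LT} on embedding spaces with unconditional FDD into spaces with unconditional basis, combined with \cite[Theorem 3.1]{FJT}, which ensures that the enlarging space can be chosen reflexive. The standard construction adds to each finite-dimensional piece a complement so that the sum has a basis with uniformly bounded basis constant, and then sums unconditionally; reflexivity is preserved because no $c_0$ or $\ell_1$ block structure is introduced. Call the resulting space $X$. Then we apply Szankowski's theorem \cite{Szankowski}, in the form recorded in \cite[Remark 3.2 and Theorem 3.3]{DKM}: every reflexive space with an unconditional basis embeds into a reflexive space with a symmetric basis. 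Renormalizing so that the first unit vector has norm one, this symmetric space is the required separable reflexive symmetric sequence space $E$, and $C_F\hookrightarrow X\hookrightarrow E\hookrightarrow S_E$.

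The main obstacle is the middle step. Reflexivity must be preserved through both embedding theorems, and the unconditional-FDD hypothesis must be verified in exactly the form these theorems require. I would first check that Arazy's unconditional FDD for $C_F$ is the shell decomposition with bounded decomposition constant, and that \cite{FJT} supplies reflexivity of the unconditional-basis superspace. Then the symmetric-basis step is a direct citation, with reflexivity guaranteed by \cite{DKM}. No complementation is claimed anywhere, which is consistent with Theorem~\ref{pg:comp-block}. Finally, by Theorem~\ref{thm:main}, the space $E$ must have trivial cotype when $F\ne\ell_2$.
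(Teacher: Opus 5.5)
Your three steps match the paper's proof: Arazy's unconditional shell decomposition, Lindenstrauss's theorem \cite[Theorem 1.g.5]{LT} upgraded to a reflexive unconditional basis by \cite[Theorem 3.1]{FJT}, Szankowski's theorem in the reflexive form of \cite{DKM}, and the diagonal isometry $a\mapsto\bigoplus_n a_ne_{11}^{(n)}$ into $S_E$. What is missing is the passage from real to complex scalars.

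The embedding theorems of Lindenstrauss, Figiel--Johnson--Tzafriri and Szankowski are real-scalar statements, and this is how the paper uses them. Applied to $C_F$, they give at best a real Banach space with a real symmetric basis and a real-linear embedding. When $\K=\mathbb C$, that is not yet what the statement requires, namely a complex symmetric sequence space $E$ containing a complex-linear copy of $C_F$. The paper handles this in three moves.

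\begin{enumerate}
\item It applies the three theorems to the real space $C_F(\mathbb R)$ of real matrices in $C_F$. This space is reflexive, and it keeps the unconditional FDD because the $\Pi_n$ preserve real matrices.
\item This yields $T:C_F(\mathbb R)\to E_{\mathbb R}$ with bounds $a,b$, where $E_{\mathbb R}$ is the real reflexive symmetric coefficient space.
\item It sets $E=\{z:\abs{z}\in E_{\mathbb R}\}$ with $\norm{z}_E=\norm{\abs{z}}_{E_{\mathbb R}}$, and $T_{\mathbb C}A=T(\operatorname{Re}A)+iT(\operatorname{Im}A)$, with real and imaginary parts taken entrywise.
\end{enumerate}

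The inequalities $\max\{\norm{u},\norm{v}\}\le\norm{u+iv}\le\norm{u}+\norm{v}$ hold in $E$ by solidity, and in $C_F$ because entrywise conjugation preserves singular values. They show that $E$ is separable and reflexive, and that $T_{\mathbb C}$ has bounds $a/2$ and $2b$. Alternatively, you can embed the realification of $C_F$ by $T$ and use the complex-linear map $x\mapsto Tx-iT(ix)$, which has bounds $a$ and $2b$. With either complexification added your proof is complete. For $\K=\mathbb R$ it is complete as written.

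Two smaller remarks.

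\begin{itemize}
\item To read the coefficient space of the basis as a symmetric sequence space, the basis should be $1$-symmetric. \cite{DKM} provides this, or you can use the standard equivalent renorming by the supremum over signs and permutations. Normalizing the first vector alone is not enough.
\item Your aside that non-trivial Boyd indices give $C_F$ non-trivial type and finite cotype is not needed. It also does not follow from the Boyd indices alone: the closure of $c_{00}$ in weak $\ell_p$, $1<p<\infty$, has both Boyd indices equal to $p$ yet contains $c_0$. Drop it.
\end{itemize}
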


\begin{proof}
Let \(C_F(\mathbb R)\) denote the real Banach space of matrices
in \(C_F\) with real entries. Since \(F\) is reflexive, \(C_F\)
and hence \(C_F(\mathbb R)\) are reflexive \cite{Arazy}.
The non-trivial Boyd indices of \(F\) make
\((\Pi_nC_F)_{n\ge1}\), with \(\Pi_n\) defined in \eqref{pg:shells},
an unconditional finite-dimensional decomposition
\cite[Corollary 4.6]{Arazy78}.
The projections \(\Pi_n\) preserve real matrices, so their estimates
restrict to \(C_F(\mathbb R)\).
By \cite[Theorem 1.g.5]{LT}, \(C_F(\mathbb R)\) embeds into a space
with an unconditional basis. Its image is a reflexive subspace,
so \cite[Theorem 3.1]{FJT} gives a reflexive space \(X\) with an
unconditional basis into which \(C_F(\mathbb R)\) embeds.
By Szankowski's theorem \cite{Szankowski}, in the strengthened form of
\cite[Theorem 3.3]{DKM}, \(X\) embeds into a reflexive space \(Y\)
with a normalized \(1\)-symmetric basis \((y_n)\).
Its coefficient space \(E_{\mathbb R}=\{a:\sum_n a_ny_n\text{ converges in }Y\}\), with
\(\norm{a}_{E_{\mathbb R}}=\norm{\sum_n a_ny_n}_Y\), is a separable reflexive
symmetric Banach sequence space containing \(C_F(\mathbb R)\).

Set \(E=\{z:\abs{z}\in E_{\mathbb R}\}\), with
\(\norm{z}_E=\norm{\abs{z}}_{E_{\mathbb R}}\).
For \(u,v\in E_{\mathbb R}\), the inequalities
\(\max\{\norm{u},\norm{v}\}\le\norm{u+iv}\le\norm{u}+\norm{v}\)
show that this space is again separable and reflexive.
If \(T:C_F(\mathbb R)\to E_{\mathbb R}\) has embedding bounds \(a,b>0\),
then \(T_{\mathbb C}(A)=T(\operatorname{Re}A)+iT(\operatorname{Im}A)\),
with real and imaginary parts taken entrywise,
is complex linear and has bounds \(a/2,2b\): the same inequalities hold
in \(C_F\), since entrywise conjugation preserves its norm.
Thus \(C_F\hookrightarrow E\).

Finally, define
\begin{equation}\label{eq:diagonal}
 D(a)=\bigoplus_{n\ge1}a_ne_{11}^{(n)}\qquad(a\in E),
\end{equation}
where \(e_{11}^{(n)}\in M_n\). This is the isometric embedding from
\cite[Proposition 1.1]{Arazy81Geometry}, using the first coordinate of
each summand $\K^n$. Its range lies in $S_E$, so
$\norm{D(a)}_{S_E}=\norm{a}_E$.
\end{proof}

\begin{remark}\label{rem:counterexample-scope}
The host \(E\) depends on \(F\). These classical theorems do not
assert complementability of the displayed copy of \(C_F\), or
an isomorphism \(C_E\simeq C_F\) or \(C_E\simeq S_E\).
If \(F\ne\ell_2\), such an \(E\) has no finite cotype by
Theorem~\ref{thm:main}.
\end{remark}

For a fixed separable symmetric space \(E\), write \(\mathcal P(E)\)
for the assertion that, for every separable symmetric sequence space \(F\),
\begin{equation}\label{eq:PE}
 \mathcal P(E):\qquad
 C_F\hookrightarrow S_E\ \Longrightarrow\ F=\ell_2 .
\end{equation}
Finite cotype implies \(\mathcal P(E)\) by Theorem~\ref{thm:main},
but it is not necessary, as the \(E=c_0\) case of the nonembedding
theorem of Huang, Sadovskaya and Sukochev
\cite[Theorem 4.1]{HSS} shows.

\begin{proposition}\label{prop:c0}
The space \(c_0\) has no finite cotype and satisfies
\(\mathcal P(c_0)\). In fact, no \(C_F\) embeds into \(S_{c_0}\).
\end{proposition}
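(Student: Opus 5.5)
The statement has two parts. The first, that $c_0$ has no finite cotype, is the standard computation. The sums $\sum_{i\le n}e_i$ in $c_0$ have norm $1$ for every choice of signs. Hence
\[
 \Bigl(\E\norm{\sum_{i\le n}r_ie_i}_{c_0}^q\Bigr)^{1/q}=1,
 \qquad
 \Bigl(\sum_{i\le n}\norm{e_i}^q\Bigr)^{1/q}=n^{1/q}.
\]
This ratio is unbounded for every finite $q$, so $c_0$ has no finite cotype. Since $\mathcal P(c_0)$ is vacuously implied by the second assertion, the remaining task is to show that no $C_F$ embeds isomorphically into $S_{c_0}$. I would simply invoke \cite[Theorem 4.1]{HSS}: $c_0$ contains no copy of $\ell_2$ (every infinite-dimensional subspace of $c_0$ contains $c_0$), so no $C_F$ embeds into $S_{c_0}$. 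I would still sketch a self-contained route, because it explains the mechanism.

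For the direct argument, the first step notes that $S_{c_0}$ is the $c_0$-direct sum $(\bigoplus_n M_n)_{c_0}$ with operator norms on the blocks. The norm of $\diag(x_n)$ in $C_{c_0}=\mathcal K(\ell_2)$ is $\sup_n\norm{x_n}$. The second step shows that every $C_F$ contains a copy of $\ell_2$: the first row $\{\sum_j a_je_{1j}\}$ is isometric to $\ell_2$, since a rank-one operator has ideal norm equal to its operator norm. It then suffices to show that $\ell_2$ does not embed into $(\bigoplus_n M_n)_{c_0}$.

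For the third step, suppose $T:\ell_2\to(\bigoplus M_n)_{c_0}$ were an embedding. A gliding hump argument applies here, in the same way as the coordinate selection in Lemma~\ref{pg:full-reproduction} with the projections $H_N$. The images $Te_k$ of the weakly null basis have $H_NTe_k\to0$ for each $N$, because $H_N$ has finite rank. So a subsequence of $(Te_k)$ is a small perturbation of a sequence supported on disjoint consecutive groups of summands. Disjointly supported vectors in a $c_0$-sum satisfy
\[
 \norm{\sum_k a_ky_k}=\max_k\abs{a_k}\norm{y_k}.
\]
Such a block sequence is therefore equivalent to the $c_0$ basis, while $(e_k)$ spans $\ell_2$. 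This is a contradiction once the perturbation is small, by the standard principle of small perturbations.

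The only delicate point is the perturbation step. It requires choosing tolerances $\epsilon_k$ with $\sum\epsilon_k$ small relative to the embedding constant, exactly as in Lemma~\ref{pg:full-reproduction}. Otherwise the argument is routine, and citing \cite[Theorem 4.1]{HSS} suffices.
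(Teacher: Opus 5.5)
Your proposal is correct and follows the paper's route: the paper also obtains this from \cite[Theorem 4.1]{HSS}, using that $c_0$ contains no copy of $\ell_2$, and the failure of finite cotype is the standard computation you give. Your supplementary gliding-hump sketch is also sound, and it needs only $\sum_k\epsilon_k<\infty$: with $a=(1,\ldots,1)\in\K^n$, the block estimate gives $\norm{T^{-1}}^{-1}\sqrt n\le\norm{T}+\sum_k\epsilon_k$, which fails for large $n$.
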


\section*{Acknowledgements}
This work was supported by the NNSF of China
(Nos. 12031004, 12301160, 12471134 and 12671159).

\end{document}